\documentclass[11pt,french,english,a4paper]{article}
\usepackage[T1]{fontenc}
\pagestyle{plain}
\usepackage{amsmath}
\usepackage[dvips]{graphicx}
\usepackage[symbol]{footmisc}



\usepackage{amsmath,amsfonts}

\usepackage{shadow}

\usepackage{latexsym}

\usepackage{latexsym}

\usepackage{textcomp}

\usepackage{indentfirst}

\usepackage{mathrsfs}

\usepackage{geometry}

\geometry{hmargin=2.5cm,vmargin=3.5cm}

\allowdisplaybreaks

\let\tilde\widetilde
\def\limnto{\mathrel{\mathop{\longrightarrow\kern 0pt}\limits_{n\to\infty}}}

\def\1{\mbox{1\hspace{-.25em}I}}

\let\tilde\widetilde

\newcommand{\theoremname}{Theorem}
\newcommand{\definitionname}{Definition}
\newcommand{\propositionname}{Proposition}
\newcommand{\lemmaname}{Lemma}
\newcommand{\corollaryname}{Corollary}
\newcommand{\propertyname}{Property}
\newcommand{\exercisename}{Exercise}
\newcommand{\remarkname}{Remark}
\newcommand{\recallname}{Recall}
\newcommand{\notationname}{Notation}

\newtheorem{theorem}{\theoremname}[section]
\newtheorem{lemma}[theorem]{\lemmaname}
\newtheorem{corollary}[theorem]{\corollaryname}

\newtheorem{definition}[theorem]{\definitionname}

\newtheorem{remark}{\remarkname}

\newtheorem{proposition}[theorem]{\propositionname}



\usepackage{babel}
\addto\extrasfrench{
\providecommand{\fg}{\ifdim\lastskip>\z@\unskip\fi~\frqq}}

\begin{document}

\title{\large{REFLECTED BACKWARD STOCHASTIC DIFFERENTIAL EQUATION WITH JUMPS AND VISCOSITY SOLUTION OF SECOND ORDER INTEGRO-DIFFERENTIAL EQUATION WITHOUT MONOTONICITY CONDITION}: \large{CASE WITH THE MEASURE OF L\'EVY INFINITE}}
\date{December, 21, 2017}
\selectlanguage{english}

\author{\textsc{L.\ SYLLA\footnotemark[2]}}

\footnotetext[2]{Universit\'e Gaston Berger, LERSTAD, CEAMITIC, e-mail: sylla.lamine@ugb.edu.sn}
\maketitle

\begin{abstract}
We consider the problem of viscosity solution of integro-partial differential equation(IPDE in short) with one obstacle via the solution of reflected backward stochastic differential  equations(RBSDE in short) with jumps. We show  existence and uniqueness of a continuous viscosity solution of equation with non local terms, in case the generator is not monotonous and Levy's measure is infinite.\\ 
\end{abstract}
\bigskip{}

\textbf{Keywords}: Integro-partial differential equation; Reflected stochastic differential equations with jumps; Viscosity solution; Non-local operator.\\
\\
\textbf{MSC 2010 subject classifications}: 35D40, 35R09, 60H30.

\bigskip{}

\newpage
\section{Introduction}
We consider the following system of integro-partial differential equation with one-obstacle $\ell$, which is a function of $(t,x)$: $\forall i\in\{1,\ldots,m\}$,
\begin{equation}\label{eq1}
\left
\{\begin{array}{ll}
\min\Big\{ u^{i}(t,x)-\ell(t,x);-\partial_{t}u^{i}(t,x)-b(t,x)^{\top}\mathrm{D}_{x}u^{i}(t,x)-\frac{1}{2}\mathrm{Tr}(\sigma\sigma^{\top}(t,x)\mathrm{D}^{2}_{xx}u^{i}(t,x))\\
\quad\quad-\mathrm{K}_{i}u^{i}(t,x)-\mathit{h}^{(i)}(t,x,u^{i}(t,x),(\sigma^{\top}\mathrm{D}_{x}u^{i})(t,x),\mathrm{B}_{i}u^{i}(t,x))\Big\}=0,\quad (t,x)\in\left[ 0,T\right] \times\mathbb{R}^{k};\\
u^{i}(T,x)=g^{i}(x);
\end{array}
\right.
\end{equation}
where the operators $\mathrm{B}_{i}$ and $\mathrm{K}_{i}$ are defined as follows:
\begin{eqnarray}
\mathrm{B}_{i}u^{i}(t,x) & = & \displaystyle\int_{\mathrm{E}}\gamma^{i}(t,x,e)(u^{i}(t,x+\beta(t,x,e))-u^{i}(t,x))\lambda(\mathrm{d} e);\label{2.2}\\
\mathrm{K}_{i}u^{i}(t,x) & = & \displaystyle\int_{\mathrm{E}}(u^{i}(t,x+\beta(t,x,e))-u^{i}(t,x)-\beta(t,x,e)^{\top}\mathrm{D}_{x}u^{i}(t,x))\lambda(de).\nonumber
\end{eqnarray}
The resolution of (\ref{eq1}) is in connection with the following system of backward stochastic differential equations with jumps and one-obstacle $\ell$:

\begin{equation}\label{eq2}
\left
\{\begin{array}{ll}
(i)~dY^{i;t,x}_{s}=-f^{(i)}(s,X^{t,x}_{s},(Y^{i;t,x}_{s})_{i=1,m},Z^{i;t,x}_{s},U^{i;t,x}_{s})ds-
\mathrm{d}\mathrm{K}^{i;t,x}_{s}\\
\quad\quad\quad\quad\quad\quad
\quad\quad+Z^{i;t,x}_{s}\mathrm{d}
\mathrm{B}_{s}+\displaystyle\int_{\mathrm{E}}\mathrm{U}^{i;t,x}
_{s}(e)\tilde{\mu}(\mathrm{d}s,\mathrm{d}e),\quad s\leq T;\\
(ii)~Y^{i;t,x}_{s}\geq \ell(s,X^{t,x}_{s})~\textrm{and}~ \displaystyle\int^{T}_{0}(Y^{i;t,x}_{s}- \ell(s,X^{t,x}_{s}))\mathrm{d}\mathrm{K}^{i;t,x}_{s}=0; 
\end{array}
\right.
\end{equation}
and\\
the following standard stochastic differential equation of diffusion-jump type:
\begin{equation}\label{2.4}
X^{t,x}_{s}=x+\displaystyle\int^{s}_{t}b(r,X^{t,x}_{r})\, \mathrm{d}r+\displaystyle\int^{s}_{t}\sigma(r,X^{t,x}_{r})\, \mathrm{d}B_{r}+\displaystyle\int^{s}_{t}
\displaystyle\int_{E}\beta(r,X^{t,x}_{r-},e)\tilde{\mu}(dr,de),
\end{equation}
for $s\in[t,T]$ and $X^{t,x}_{s}=x$ if $s\leq t$.\\

It is recalled that pioneering work was done for the resolution of (\ref{eq1}), among these works we can mention those of Barles and al. \cite{bar} in case without obstacle, Harraj and al. \cite{har} in the case with two obstacles; with as common point the hypothesis of monotony on the generator and $\gamma \geq 0$.
But recently Hamad\`ene and Morlais relaxed these conditions with $\lambda(.)$ finite \cite{hamaMor}.\\
In this work we propose to solve (\ref{eq1}) by relaxing the monotonicity of the generator and the positivity of $\gamma$ and assuming that $\lambda=\infty$.\\
Our paper is organized as follows: in the next section we give the notations and the assumptions of our objects; in section $3$ we recall a number of existing results; in section $4$ we build estimates and properties for a good resolution of our problem;  section $5$ is reserved to give our main result and the section $6$ for doing an extension of our result.\\
And in the end, classical definition of the concept of viscosity solution is put in appendix.

\section{Notations and assumptions}
Let $\left(\Omega,\mathcal{F},(\mathcal{F}_{t})_{t\leq T},\mathbb{P}\right)$ be a stochastic basis such that $\mathcal{F}_{0}$ contains all $\mathbb{P}-$null sets of $\mathcal{F}$, and $\mathcal{F}_{t}=\mathcal{F}_{t+}:=\bigcap_{\epsilon>0}
\mathcal{F}_{t+\epsilon},~t\geq 0$, and we suppose that the filtration is generated by the two mutually independents processes:\\
(i) $B:=(B_{t})_{t\geq 0}$ a $d$-dimensional Brownian motion and,\\
(ii) a Poisson random measure $\mu$ on $\mathbb{R}^{+}\times\mathrm{E}$ where $\mathrm{E}:=\mathbb{R}^{\ell}-\{0\}$ is equipped with its Borel field $\mathcal{E}$ $(\ell\geq 1)$. The compensator $\nu(\mathrm{d}t,\mathrm{d}e)=\mathrm{d}t\lambda(\mathrm{d}e)$ is such that $\{\tilde{\mu}(\left[0,t\right]\times A)=(\mu-\lambda)(\left[ 0,t\right]\times A)\}_{t\geq 0}$ is a martingale for all $A\in\mathcal{E}$ satisfying $\lambda(A)<\infty$. We also assume that $\lambda$ is a $\sigma$-finite measure on $(E,\mathcal{E})$, integrates the function $(1\wedge\mid e\mid ^{2})$ and $\lambda(E)=\infty$.\\
Let's now introduce the following spaces:\\ 
(iii) $\mathcal{P}~(resp.~\mathbf{P})$ the field on $\left[0,T\right]\times \Omega$ of $\mathcal{F}_{t\leq T}$-progressively measurable (resp. predictable) sets.\\
(iv) For $\kappa\geq 1$, $\mathbb{L}^{2}_{\kappa}(\lambda)$ the space of Borel measurable functions $\varphi:=(\varphi(e))_{e\in E}$ from $E$ into $\mathbb{R}^{\kappa}$ such that 
$\|\varphi\|^{2}_{\mathbb{L}^{2}_{\kappa}(\lambda)}=\displaystyle\int_{E}\left|\varphi(e)\right|^{2}_{\kappa}\lambda(\mathrm{d}e)<\infty$; $\mathbb{L}^{2}_{1}(\lambda)$ will be simply denoted by $\mathbb{L}^{2}(\lambda)$;\\
(v) $\mathcal{S}^{2}(\mathbb{R}^{\kappa})$ the space of RCLL (for right continuous with left limits) $\mathcal{P}$-measurable and $\mathbb{R}^{\kappa}$-valued processes such that $\mathbb{E}[\sup_{s\leq T} \left|Y_{s}\right|^{2}]<\infty$; $\mathcal{A}^{2}_{c}$ is its subspace of continuous non-decreasing processes $(\mathrm{K}_{t})_{t\leq T}$ such that 
$\mathrm{K}_{0}=0$ and $\mathbb{E}\left[(\mathrm{K}_{T})^{2} \right]<\infty$;\\
(vi) $\mathbb{H}^{2}(\mathbb{R}^{\kappa\times d})$ the space of processes $Z:=(Z_{s})_{s\leq T}$ which are $\mathcal{P}$-measurable, $\mathbb{R}^{\kappa\times d}$-valued and satisfying $\mathbb{E}\left[\displaystyle\int^{T}_{0}\left|Z_{s}\right|^{2}\, \mathrm{d} s\right]<\infty$;\\
(vii) $\mathbb{H}^{2}(\mathbb{L}^{2}_{\kappa}(\lambda))$ the space of processes $U:=(U_{s})_{s\leq T}$ which are $\mathbf{P}$-measurable, $\mathbb{L}^{2}_{\kappa}(\lambda)$-valued and satisfying $\mathbb{E}\left[\displaystyle\int^{T}_{0}\|U_{s}(\omega)\|^{2}_{\mathbb{L}^{2}_{\kappa}(\lambda)}\, \mathrm{d} s\right]<\infty$;\\
(viii) $\Pi_{g}$ the set of deterministics functions\\ $\varpi:~(t,x)\in [0,T]\times \mathbb{R}^{\kappa}\mapsto\varpi(t,x)\in\mathbb{R}$ of polynomial growth, i.e., for which there exists two non-negative constants $C$ and $p$ such that for any $(t,x)\in [0,T]\times \mathbb{R}^{\kappa}$,
$$\left|\varpi(t,x)\right|\leq C(1+\left|x\right|^{p}).$$
The subspace of $\Pi_{g}$ of continuous functions will be denoted by $\Pi^{c}_{g}$;\\
(ix) $\mathcal{U}$ the subclass of $\Pi^{c}_{g}$ which consists of functions \\$\Phi:~(t,x)\in [0,T]\times \mathbb{R}^{\kappa}\mapsto\mathbb{R}$ such that for some non-negative constants $C$ and $p$ we have
$$\left|\Phi(t,x)-\Phi(t,x')\right|\leq C(1+\left|x\right|^{p}+\left|x'\right|^{p})\left|x-x'\right|,~\textrm{for any}~t,~x,~x'.$$
(x) For any process $\theta:=(\theta_{s})_{s\leq T}$ and $t\in(0,T],~\theta_{t-}=\lim_{s\nearrow t}\theta_{s}$ and\\
$$\Delta_{t}\theta=\theta_{t}-\theta_{t-}.$$
Now let $b$ and $\sigma$  be the following functions:
$$b:(t,x)\in [0,T]\times \mathbb{R}^{k}\mapsto b(t,x)\in\mathbb{R}^{k};$$
$$\sigma:(t,x)\in [0,T]\times \mathbb{R}^{k}\mapsto\sigma(t,x)\in\mathbb{R}^{k\times d}.$$
We assume that they are jointly continuous in $(t,x)$ and Lipschitz continuous w.r.t. $x$ uniformly in $t$, i.e., there exists a constant $C$ such that,
\begin{equation}\label{2.5}
\forall (t,x,x')\in[0,T]\times \mathbb{R}^{k+k},~\left|b(t,x)-b(t,x')\right|+\left|\sigma(t,x)-\sigma(t,x')\right|\leq C\left|x-x'\right|.
\end{equation}
Let us notice that by (\ref{2.5}) and continuity, the functions $b$ and $\sigma$ are of linear growth, i.e., there exists a constant $C$ such that
\begin{equation}\label{2.6}
\forall (t,x,x')\in[0,T]\times \mathbb{R}^{k+k},~\left|b(t,x)\right|+\left|\sigma(t,x)\right|\leq C\left|1+x\right|.
\end{equation}
Let $\beta:(t,x,e)\in [0,T]\times \mathbb{R}^{k}\times E\mapsto \beta(t,x,e)\in\mathbb{R}^{k}$ be a measurable function such that for some real constant $C$, and for all $e\in E$,
\begin{eqnarray}\label{2.7}
& (i) & \left|\beta(t,x,e)\right|  \leq  C(1\wedge\left|e\right|); \\ \nonumber  
& (ii) & \left|\beta(t,x,e)-\beta(t,x',e)\right|\leq C\left|x-x'\right|(1\wedge\left|e\right|);\\
& (iii) & \textrm{the mapping}~(t,x)\in[0,T]\times \mathbb{R}^{k}\mapsto \beta(t,x,e)\in\mathbb{R}^{k}~\textrm{is continuous for any}~\mathrm{e}\in\mathrm{E}\nonumber.
\end{eqnarray}
We are now going to introduce the objects which are specifically connected to the RBSDE with jumps we will deal with. Let $\ell$  the barrier of (\ref{eq2}); $(g^{i})_{i=1,m}$ and $(h^{(i)})_{i=1,m}$ be two functions defined as follows: for $i=1,\ldots,m$,
\begin{eqnarray}
g^{i}:\mathbb{R}^{k} & \longrightarrow & \mathbb{R}^{m}\nonumber \\
{}{}x & \longmapsto & g^{i}(x)\nonumber
\end{eqnarray}
and
\begin{eqnarray}
h^{(i)}:[0,T]\times\mathbb{R}^{k+m+d+1} & \longrightarrow & \mathbb{R}\nonumber \\
{}{}(t,x,y,z,q) & \longmapsto & h^{(i)}(t,x,y,z,q).\nonumber
\end{eqnarray}
Moreover we assume they satisfy:\\
(\textbf{H1})\label{2.10}: The reflecting barrier $\ell$ is real valued and $\mathcal{P}$-measurable process satisfying, $\ell\in\mathcal{U}$ i.e., it is continuous and there exists constants $C$ and $p$ such that,\\
$\left|\ell(t,x)-\ell(t,x')\right|\leq C(1+\left|x\right|^{p}+\left|x'\right|^{p})\left|x-x'\right|$, for any $t\geq 0$, $x$, $x'$.\\
\\
(\textbf{H2}): For any $i\in\left\lbrace 1,\ldots,m\right\rbrace$, the function $g^{i}$ belongs to $\mathcal{U}$.\\
\\
(\textbf{H3}): For any $i\in\left\lbrace 1,\ldots,m\right\rbrace$,
\begin{eqnarray} 
& (i) &~\textrm{the function}~h^{(i)}~\textrm{is Lipschitz in}~ (y,z,q)~\textrm{uniformly in}~(t,x),~\textrm{i.e., there exists a real constant}\nonumber\\
& {}{} & \textrm{C such that for any}~ (t,x)\in[0,T]\times\mathbb{R}^{k}, (y,z,q)~\textrm{and}~(y',z',q')~\textrm{elements of}~\mathbb{R}^{m+d+1},\nonumber\\
& {}{} &\left|h^{(i)}(t,x,y,z,q)-h^{(i)}(t,x,y',z',q')\right|\leq C(\left|y-y'\right|+\left|z-z'\right|+\left|q-q'\right|);\label{2.11}\\
& (ii) & ~\textrm{the}~(t,x)\mapsto h^{(i)}(t,x,y,z,q), ~\textrm{for fixed}~ (y,z,q)\in\mathbb{R}^{m+d+1},~\textrm{belongs uniformly to}~\mathcal{U},~\textrm{i.e., it}\nonumber\\
& {}{} &\textrm{is continuous and there exists constants C and p (which do not depend on}~(y,z,q))~\textrm{such that},\nonumber\\
& {} & \left|h^{(i)}(t,x,y,z,q)-h^{(i)}(t,x',y,z,q)\right|\leq C(1+\left|x\right|^{p}+\left|x'\right|^{p})\left|x-x'\right|,~\textrm{for any}~t\geq 0,~x,~x'.
\end{eqnarray}
Next let $\gamma^{i},~i=1,\ldots,m$ be Borel measurable functions defined from $[0,T]\times\mathbb{R}^{k}\times E$ into $\mathbb{R}$ and satisfying:
\begin{eqnarray}\label{2.12} 
& (i) &\left|\gamma^{i}(t,x,e)\right|\leq C(1\wedge\left|e\right|);\nonumber\\
& (ii) & \left|\gamma^{i}(t,x,e)-\gamma^{i}(t,x',e)\right|\leq C(1\wedge\left|e\right|)\left|x-x'\right|(1+\left|x\right|^{p}+\left|x'\right|^{p});\\
& (iii) & \textrm{the mapping}~t\in[0,T]\mapsto \gamma^{i}(t,x,e)\in\mathbb{R}~ \textrm{is continuous for any}~(x,e)\in\mathbb{R}^{k}\times E.\nonumber
\end{eqnarray}
Finally we introduce the following functions $(f^{(i)})_{i=1,m}$ defined by:
\begin{equation}\label{2.13}
\forall (t,x,y,z,\zeta)\in[0,T]\times\mathbb{R}^{k+m+d}\times \mathbb{L}^{2}(\lambda),~f^{(i)}(t,x,y,z,\zeta):=h^{(i)}\left(t,x,y,z,\displaystyle\int_{E}\gamma^{i}(t,x,e)\zeta(e)\lambda(de)\right).
\end{equation}
The functions $(f^{(i)})_{i=1,m}$, enjoy the two following properties:

\begin{eqnarray}\label{2.15} 
& (a) &~\textrm{The function}~f^{(i)}~\textrm{is Lipschitz in}~ (y,z,\zeta)~\textrm{uniformly in}~(t,x),~\textrm{i.e., there exists a real constant}\nonumber\\
& {}{} & \textrm{C such that}\nonumber\\
& {}{} &\left|f^{(i)}(t,x,y,z,\zeta)-f^{(i)}(t,x,y',z',\zeta')\right|\leq C(\left|y-y'\right|+\left|z-z'\right|+\|\zeta-\zeta'\|_{\mathbb{L}^{2}(\lambda)});\\
\nonumber
& {}{} & \textrm{since}~h^{(i)}~\textrm{is uniformly Lipschitz in}~(y,z,q)~\textrm{and}~ \gamma^{i}~\textrm{verifies (\ref{2.11})-(i)};\nonumber\\
& (b) &~\textrm{The function}~(t,x)  \in[0,T]\times\mathbb{R}^{k}\mapsto f^{(i)}(t,x,0,0,0)~\textrm{belongs}~to~\Pi^{c}_{g};\nonumber\\
\nonumber
& {}{} & \textrm{and then}~ \mathbb{E}\left[\displaystyle\int^{T}_{0}\left|f^{(i)}(r,X^{t,x}_{r},0,0,0)\right|^{2}\,dr \right]<\infty.
\end{eqnarray}
Having defined our data and put our assumptions, we can look at the state of the art.
 
\section{Preliminaires}
\subsection{A class of diffusion processes with jumps}
Let $(t,x)\in[0,T]\times\mathbb{R}^{d}$ and $(X^{t,x}_{s})_{s\leq T}$ be the stochastic process solution of (\ref{2.4}).
Under assumptions (\ref{2.5})-(\ref{2.7}) the solution of Equation (\ref{2.4}) exists and is unique (see \cite{fuj} for more details).
We state some properties of the process $\{(X^{t,x}_{s}),~s\in[0,T]\}$ which can found in \cite{fuj}. 
\begin{proposition}
For each $t\geq 0$, there exists a version of $\{(X^{t,x}_{s}),~s\in[t,T]\}$ such that $s\rightarrow X^{t}_{s}$ is a $C^{2}(\mathbb{R}^{d})$-valued rcll process. Moreover it satisfies the following estimates: $\forall p\geq 2,~x,x'\in\mathbb{R}^{d}$ and $s\geq t$,
\begin{eqnarray}
\mathbb{E}[\sup_{t\leq r\leq s} \left|X^{t,x}_{r}-x\right|^{p}] & \leq &  M_{p}(s-t)(1+\left|x\right|^{p});\nonumber\\
\mathbb{E}[\sup_{t\leq r\leq s} \left|X^{t,x}_{r}-X^{t,x'}_{r}-(x-x')^{p}\right|^{p}] & \leq &  M_{p}(s-t)(\left|x-x'\right|^{p});\label{3.16}
\end{eqnarray}
for some constant $M_{p}$.
\end{proposition}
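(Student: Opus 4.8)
The plan is to take the existence and uniqueness of the solution of \eqref{2.4} as given (this is exactly what is recalled from \cite{fuj}), and then to establish the two moment estimates in \eqref{3.16} by the usual scheme: write \eqref{2.4} in integral form, apply maximal/Burkholder-type inequalities term by term, and close with Gronwall's lemma. The $C^{2}$-regularity in the initial datum $x$, together with the fact that $s\mapsto X^{t}_{s}$ is càdlàg as a $C^{2}(\mathbb{R}^{d})$-valued process, belongs to the theory of stochastic flows for diffusion--jump equations; I would simply recall it from \cite{fuj}. The càdlàg character itself is immediate: $X^{t,x}$ is the sum of the constant $x$, a (continuous) Lebesgue integral, a (continuous) Brownian stochastic integral, and a stochastic integral against the compensated Poisson random measure $\tilde\mu$, the last of which is càdlàg, hence so is the sum.

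For the first estimate I would start from, for $r\in[t,s]$,
$$X^{t,x}_{r}-x=\int_{t}^{r}b(u,X^{t,x}_{u})\,du+\int_{t}^{r}\sigma(u,X^{t,x}_{u})\,dB_{u}+\int_{t}^{r}\int_{E}\beta(u,X^{t,x}_{u-},e)\,\tilde\mu(du,de),$$
take the supremum over $r\in[t,s]$, then the $p$-th moment, and split into three pieces via $|a+b+c|^{p}\le 3^{p-1}(|a|^{p}+|b|^{p}+|c|^{p})$. I would bound the drift piece by Hölder's inequality, the Brownian piece by the Burkholder--Davis--Gundy inequality, and the jump piece by Kunita's inequality (the BDG-type inequality for integrals against a compensated Poisson random measure), which produces the two terms $\mathbb{E}[(\int_{t}^{s}\!\int_{E}|\beta|^{2}\lambda(de)\,du)^{p/2}]$ and $\mathbb{E}[\int_{t}^{s}\!\int_{E}|\beta|^{p}\lambda(de)\,du]$. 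Using the linear growth \eqref{2.6} of $b$ and $\sigma$, the bound \eqref{2.7}-(i) ($|\beta(u,x,e)|\le C(1\wedge|e|)$), and the integrability $\int_{E}(1\wedge|e|^{2})\lambda(de)<\infty$ — which for $p\ge 2$ also controls $\int_{E}(1\wedge|e|)^{p}\lambda(de)$ since $(1\wedge|e|)^{p}\le 1\wedge|e|^{2}$ — one arrives, with $\varphi(s):=\mathbb{E}[\sup_{t\le r\le s}|X^{t,x}_{r}-x|^{p}]$ (finite by the $L^{p}$-bounds used in constructing the solution), at
$$\varphi(s)\le C\,(s-t)\,(1+|x|^{p})+C\int_{t}^{s}\varphi(u)\,du,$$
so that Gronwall's lemma gives $\varphi(s)\le M_{p}(s-t)(1+|x|^{p})$ on $[t,T]$.

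The second estimate is obtained by applying exactly the same machinery to the difference of two solutions. Writing the equation satisfied by $X^{t,x}_{r}-X^{t,x'}_{r}-(x-x')$ and using the Lipschitz bounds \eqref{2.5} for $b,\sigma$ and \eqref{2.7}-(ii) for $\beta$, the same maximal inequalities yield, with $\psi(s):=\mathbb{E}[\sup_{t\le r\le s}|X^{t,x}_{r}-X^{t,x'}_{r}-(x-x')|^{p}]$,
$$\psi(s)\le C\int_{t}^{s}\mathbb{E}\big[|X^{t,x}_{u}-X^{t,x'}_{u}|^{p}\big]\,du+C\int_{t}^{s}\psi(u)\,du\le C(s-t)|x-x'|^{p}+C\int_{t}^{s}\psi(u)\,du,$$
where in the last step I invoke the classical estimate $\mathbb{E}[\sup_{u\le s}|X^{t,x}_{u}-X^{t,x'}_{u}|^{p}]\le C|x-x'|^{p}$ (proved by the very same Gronwall argument, without subtracting the initial increment) together with the triangle inequality $|X^{t,x}_{u}-X^{t,x'}_{u}|\le|X^{t,x}_{u}-X^{t,x'}_{u}-(x-x')|+|x-x'|$. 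A final application of Gronwall's lemma gives the stated bound. The only genuinely delicate point in all of this is the jump term: one must use Kunita's inequality rather than the classical BDG inequality and exploit the integrability condition on $\lambda$ carefully, so that both the $L^{2}$- and $L^{p}$-type contributions it produces are finite and correctly of order $s-t$; the rest of the argument is routine.
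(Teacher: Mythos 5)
The paper offers no proof of this proposition at all: it is quoted verbatim from Fujiwara--Kunita \cite{fuj}, and the text merely points the reader there. Your sketch is the standard and correct argument for the two moment estimates (integral form, H\"older for the drift, Burkholder--Davis--Gundy for the Brownian part, Kunita's inequality for the compensated Poisson integral, then Gronwall), and you rightly observe the one genuinely delicate point, namely that for the jump term one needs both the $L^{2}$- and $L^{p}$-type contributions of Kunita's inequality, which are finite because $(1\wedge|e|)^{p}\le 1\wedge|e|^{2}$ for $p\ge 2$ and $\lambda$ integrates $1\wedge|e|^{2}$. Two minor remarks: the a priori finiteness of $\varphi(s)$ that Gronwall requires should strictly be secured by a localization (stopping-time) argument or by working on the Picard iterates, which you only gesture at parenthetically; and the $C^{2}(\mathbb{R}^{d})$-valued c\`adl\`ag modification (smoothness of the flow in $x$) is indeed a stochastic-flow result that can only be imported from \cite{fuj}, exactly as the paper does. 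You also implicitly correct the typo in the second estimate of \eqref{3.16}, where $(x-x')^{p}$ inside the absolute value should read $(x-x')$.
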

\subsection{Existence and uniqueness for a RBSDE with jumps}
Let $(t,x)\in[0,T]\times\mathbb{R}^{d}$  and we consider the following m-dimensional RBSDE with jumps:
\begin{equation}\label{3.17}
\left
\{\begin{array}{ll}
(i)~\vec{Y}^{t,x}:=(Y^{i,t,x})_{i=1,m}\in\mathcal{S}^{2}(\mathbb{R}^{m}),~Z^{t,x}:=(Z^{i,t,x})_{i=1,m}\in\mathbb{H}^{2}(\mathbb{R}^{m\times d}),\\
 K^{t,x}:=(K^{i,t,x})_{i=1,m}\in\mathcal{A}^{2}_{c},~ U^{t,x}:=(U^{i,t,x})_{i=1,m}\in\mathbb{H}^{2}(\mathbb{L}^{2}_{m}(\lambda));\\
(ii)~dY^{i;t,x}_{s}=-f^{(i)}(s,X^{t,x}_{s},(Y^{i;t,x}_{s})_{i=1,m},Z^{i;t,x}_{s},U^{i;t,x}_{s})ds-
\mathrm{d}\mathrm{K}^{i;t,x}_{s}\\
\quad\quad\quad\quad\quad\quad
\quad\quad+Z^{i;t,x}_{s}\mathrm{d}
\mathrm{B}_{s}+\displaystyle\int_{\mathrm{E}}\mathrm{U}^{i;t,x}
_{s}(e)\tilde{\mu}(\mathrm{d}s,\mathrm{d}e),\quad s\leq T;\\
(iii)~Y^{i;t,x}_{s}\geq \ell(s,X^{t,x}_{s})~\textrm{and}~ \displaystyle\int^{T}_{0}(Y^{i;t,x}_{s}- \ell(s,X^{t,x}_{s}))\mathrm{d}\mathrm{K}^{i;t,x}_{s}=0; 
\end{array}
\right.
\end{equation}
where for any $i\in\{1,\ldots,m\}$, $Z^{i;t,x}_{s}$ is the ith row of $Z^{t,x}_{s}$, $K^{i;t,x}_{s}$ is the ith component of $K^{t,x}_{s}$ and $U^{i;t,x}_{s}$ is the ith component of $U^{t,x}_{s}$.\\
The following result is related to existence and uniqueness of a solution for the RBSDE with jumps (\ref{3.17}).\\
Its proof is given in \cite{hamaOuk} by using the penalization method (see p .5-12) and the Snell envelope method (see p. 14-16).
\begin{proposition}
Assume that assumptions $(\mathbf{H1}),~(\mathbf{H2})~\text{and }(\mathbf{H3})$ hold. Then for any $(t,x)\in[0,T]\times\mathbb{R}^{d}$, the RBSDE (\ref{3.17}) has an unique solution $(\vec{Y}^{t,x},Z^{t,x},U^{t,x},K^{t,x})$.
\end{proposition}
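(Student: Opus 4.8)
\ The plan, following \cite{hamaOuk}, is to combine a penalization procedure with the Snell-envelope representation, keeping track of two features: $\lambda(E)=\infty$, so that all integrals against $\tilde{\mu}$ and all estimates must be handled in the norm of $\mathbb{H}^{2}(\mathbb{L}^{2}_{m}(\lambda))$; and the absence of any monotonicity on the $f^{(i)}$, so that the classical comparison theorem for BSDEs with jumps is not available. First, one establishes a priori estimates: for any solution of (\ref{3.17}), It\^o's formula applied to $e^{\beta s}|\vec{Y}^{t,x}_{s}|^{2}$, together with the Lipschitz property (\ref{2.15})(a) of $f^{(i)}$, the square integrability of $f^{(i)}(\cdot,X^{t,x}_{\cdot},0,0,0)$ from (\ref{2.15})(b), assumptions $(\mathbf{H1})$--$(\mathbf{H2})$, the moment bounds on $X^{t,x}$ recalled above, the Skorokhod condition (\ref{3.17})(iii) (which, via $Y^{i,t,x}\ge\ell(\cdot,X^{t,x}_{\cdot})$ and the identity $K^{i,t,x}_{T}=Y^{i,t,x}_{0}-g^{i}(X^{t,x}_{T})-\int_{0}^{T}f^{(i)}dr+\int_{0}^{T}Z^{i,t,x}_{r}dB_{r}+\int_{0}^{T}\int_{E}U^{i,t,x}_{r}(e)\tilde{\mu}(dr,de)$, controls $\E[(K^{t,x}_{T})^{2}]$), the Burkholder--Davis--Gundy inequality, and $\beta$ chosen large, yield $\E[\sup_{s\le T}|\vec{Y}^{t,x}_{s}|^{2}+\int_{0}^{T}|Z^{t,x}_{s}|^{2}ds+\int_{0}^{T}\|U^{t,x}_{s}\|^{2}_{\mathbb{L}^{2}_{m}(\lambda)}ds+(K^{t,x}_{T})^{2}]\le C(1+|x|^{2p})$.

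\noindent\textbf{Existence.} We first solve, for a fixed $i$ and a given process $\varphi\in\mathbb{H}^{2}(\R)$, the driver-free reflected BSDE with jumps with data $(\varphi,\ell(\cdot,X^{t,x}_{\cdot}),g^{i}(X^{t,x}_{T}))$: set $Y_{s}$ equal to the essential supremum, over all $[0,T]$-valued stopping times $\tau\ge s$, of $\E[\int_{s}^{\tau}\varphi_{r}dr+\ell(\tau,X^{t,x}_{\tau})\1_{\{\tau<T\}}+g^{i}(X^{t,x}_{T})\1_{\{\tau=T\}}\mid\mathcal{F}_{s}]$. By the theory of Snell envelopes, $s\mapsto Y_{s}+\int_{0}^{s}\varphi_{r}dr$ is the smallest RCLL supermartingale dominating the obstacle $\ell(\cdot,X^{t,x}_{\cdot})$ augmented by the running cost; its Doob--Meyer decomposition produces a predictable nondecreasing process $K$, which is \emph{continuous} because the obstacle $\ell(\cdot,X^{t,x}_{\cdot})$ has only totally inaccessible jumps, inherited from the Poisson part of $X^{t,x}$; the martingale part, via the representation property of the pair $(B,\mu)$, produces $Z\in\mathbb{H}^{2}(\R^{d})$ and $U\in\mathbb{H}^{2}(\mathbb{L}^{2}(\lambda))$; and the Skorokhod condition follows from the minimality of the Snell envelope. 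Specializing the a priori estimate to the difference of two such driver-free solutions yields a Lipschitz dependence of $(Y,Z,U,K)$ on $\varphi$ in a $\beta$-weighted norm. One then solves the coupled system (\ref{3.17}) by Picard iteration: from $(\vec{Y}^{0},Z^{0},U^{0})=0$, given $(\vec{Y}^{p},Z^{p},U^{p})$ put $\varphi^{p,i}_{s}:=f^{(i)}(s,X^{t,x}_{s},\vec{Y}^{p}_{s},Z^{p,i}_{s},U^{p,i}_{s})$ and let $(Y^{p+1,i},Z^{p+1,i},U^{p+1,i},K^{p+1,i})$ solve the driver-free reflected BSDE with data $\varphi^{p,i}$; by the Lipschitz property (\ref{2.15})(a) of $f^{(i)}$ and the difference estimate, the induced map is, for $\beta$ large enough, a contraction on $\mathcal{S}^{2}(\R^{m})\times\mathbb{H}^{2}(\R^{m\times d})\times\mathbb{H}^{2}(\mathbb{L}^{2}_{m}(\lambda))$, whose fixed point, with the accompanying limit of the $K^{p}$, solves (\ref{3.17}). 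Equivalently one may run the penalization scheme $dY^{n,i}_{s}=-f^{(i)}(s,X^{t,x}_{s},\vec{Y}^{n}_{s},Z^{n,i}_{s},U^{n,i}_{s})ds-n(Y^{n,i}_{s}-\ell(s,X^{t,x}_{s}))^{-}ds+Z^{n,i}_{s}dB_{s}+\int_{E}U^{n,i}_{s}(e)\tilde{\mu}(ds,de)$ --- whose solutions exist by the classical theory of Lipschitz BSDEs with jumps (see e.g. \cite{bar}) --- derive uniform estimates and pass to the limit, the needed convergence of $Y^{n}$ in $n$ being read off the Snell-envelope characterization of the limit rather than from a comparison theorem.

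\noindent\textbf{Uniqueness.} For two solutions, It\^o's formula applied to $e^{\beta s}\sum_{i=1}^{m}|Y^{1,i}_{s}-Y^{2,i}_{s}|^{2}$ produces, besides the generator terms (controlled by (\ref{2.15})(a)), the contribution $-2\sum_{i}\int_{s}^{T}(Y^{1,i}_{r}-Y^{2,i}_{r})(dK^{1,i}_{r}-dK^{2,i}_{r})$, which is nonpositive: by the Skorokhod conditions (\ref{3.17})(iii), $\int(Y^{j,i}_{r}-\ell(r,X^{t,x}_{r}))dK^{j,i}_{r}=0$ for $j=1,2$ while $\int(Y^{2,i}_{r}-\ell(r,X^{t,x}_{r}))dK^{1,i}_{r}\ge 0$ and $\int(Y^{1,i}_{r}-\ell(r,X^{t,x}_{r}))dK^{2,i}_{r}\ge 0$. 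Discarding that term, the Lipschitz bound on $f^{(i)}$, Young's inequality and $\beta$ large give $\vec{Y}^{1}=\vec{Y}^{2}$; the equation then forces $Z^{1}=Z^{2}$, $U^{1}=U^{2}$ and $K^{1}=K^{2}$.

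\noindent\textbf{Main difficulty.} The genuine obstacle is the lack of a comparison theorem for BSDEs with jumps under the present hypotheses ($f^{(i)}$ not monotone, $\gamma^{i}$ possibly negative): this is exactly what rules out the plain monotone-penalization proof and forces the Snell-envelope construction. Within it, the delicate points are (i) that, since $\lambda(E)=\infty$, the $\tilde{\mu}$-integrals and the $L^{2}$-isometry make sense only in $\mathbb{H}^{2}(\mathbb{L}^{2}(\lambda))$, so every estimate has to be performed there, and (ii) the continuity of the reflecting process $K$, which rests on all the jumps of the obstacle $\ell(\cdot,X^{t,x}_{\cdot})$ being totally inaccessible.
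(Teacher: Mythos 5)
Your sketch is correct and follows essentially the same route as the paper, which does not reprove this proposition but refers to Hamad\`ene--Ouknine \cite{hamaOuk}, where existence and uniqueness are obtained precisely by the penalization and Snell-envelope/fixed-point arguments you describe (including the continuity of $K$ via the total inaccessibility of the obstacle's jumps and the treatment of $U$ in $\mathbb{H}^{2}(\mathbb{L}^{2}_{m}(\lambda))$ when $\lambda(E)=\infty$). The only nit is a sign-convention slip in the uniqueness step: after It\^o's formula the reflection term appears as $+2\int(Y^{1}-Y^{2})\,d(K^{1}-K^{2})$ on the right-hand side, and it is this quantity that is nonpositive by the Skorokhod conditions you invoke.
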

\begin{remark}
The solution of this RBSDE with jumps exist and is unique since:
\begin{eqnarray*} 
& (i) &\mathbb{E}\left[\left|g(X^{t,x}_{T})\right|^{2}\right]<\infty,~\textrm{due to polynomial growth of}~g~\textrm{and estimate}~(\ref{3.16})~on~X^{t,x};\nonumber\\
& (ii) & \textrm{for any}~i=1,\ldots,m,~f^{(i)}~\textrm{verifies the properties (a)-(b) related to uniform}\\ 
& {}{} & \textrm{Lipschitz w.r.t.}~(y,z,\zeta)~\textrm{and}~ds\otimes d\mathbb{P}-\textrm{square integrability of the process }\\
& {}{} & (f^{(i)}(s,X^{t,x}_{s},0,0,0))_{s\leq T}.
\end{eqnarray*}
\end{remark}
\subsection{Viscosity solutions of integro-differential partial equation with one obstacle}
The following result on one obstacle is proved with two obstacles in Harraj and al. (see. \cite{har}, Theorem 4.6. p. 47 by using (4.1) p. 44), establishes the relationship between the solution of (\ref{3.17}) and the one of system (\ref{eq1}).
\begin{proposition}
Assume that (\textbf{H1}), (\textbf{H2}), (\textbf{H3}) are fulfilled. Then there exists deterministic continuous functions $(u^{i}(t,x))_{i=1,m}$ which belong to $\Pi_{g}$ such that for any $(t,x)\in[0,T]\times\mathbb{R}^{k}$, the solution of the RBSDE (\ref{3.17}) verifies:
\begin{equation}\label{3.18}
\forall i\in\{1,\ldots,m\},~\forall s\in[t,T],~Y^{i;t,x}_{s}=u^{i}(s,X^{t,x}_{s}).
\end{equation} 
Moreover if for any $i\in\{1,\ldots,m\}$,
\begin{eqnarray*} 
& (i) & \gamma^{i}\geq 0;\\
& (ii) & \textrm{for any fixed}~(t,x,\vec{y},z)\in[0,T]\times \mathbb{R}^{k+m+d},~\textrm{the mapping}\\ 
& {}{} & (q\in\mathbb{R})\longmapsto h^{(i)}(t,x,\vec{y},z,q)\in\mathbb{R}~\textrm{is non-decreasing}. 
\end{eqnarray*}
\end{proposition}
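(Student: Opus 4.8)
\emph{Proof sketch.} The statement is, up to sending the upper barrier to $+\infty$, that of \cite{har}, Theorem~4.6, and I would reproduce its proof in two stages: first construct the deterministic functions $u^{i}$ and establish their regularity, then --- using hypotheses (i)--(ii) --- check that $\vec{u}=(u^{1},\dots,u^{m})$ is a viscosity solution of (\ref{eq1}) in the sense recalled in the appendix. The step I expect to be the real obstacle is the contact-point identification of the $Z^{i},U^{i}$ components of the reflected BSDE, carried out under the constraints imposed by the infinite L\'evy measure.

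\textbf{Step 1: construction of $u^{i}$ and the representation (\ref{3.18}).} Set $u^{i}(t,x):=Y^{i;t,x}_{t}$; this is deterministic because on $[t,T]$ the process $X^{t,x}$ depends only on the increments $(B_{r}-B_{t})_{r\ge t}$ and on $\mu$ restricted to $(t,T]\times E$, both independent of $\mathcal{F}_{t}$, so by uniqueness for (\ref{3.17}) the whole quadruple is adapted to the filtration generated by those increments, whence $Y^{i;t,x}_{t}$ is simultaneously $\mathcal{F}_{t}$-measurable and $\mathcal{F}_{t}$-independent. The flow property $X^{t,x}_{r}=X^{s,X^{t,x}_{s}}_{r}$ for $t\le s\le r\le T$ (strong uniqueness for (\ref{2.4})) and uniqueness for (\ref{3.17}) give $Y^{i;t,x}_{s}=Y^{i;s,X^{t,x}_{s}}_{s}$ a.s.; approximating the $\mathcal{F}_{s}$-measurable variable $X^{t,x}_{s}$ by step functions and invoking the $L^{2}$-stability of (\ref{3.17}) upgrades this to $Y^{i;t,x}_{s}=u^{i}(s,X^{t,x}_{s})$ a.s.\ for every $s\in[t,T]$, in particular $u^{i}(T,\cdot)=g^{i}$. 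Polynomial growth of $u^{i}$ follows from the a priori estimate for (\ref{3.17}) together with (\ref{3.16}) and the $\Pi_{g}$-growth of $\ell,g^{i},f^{(i)}(\cdot,\cdot,0,0,0)$, while joint continuity of $(t,x)\mapsto u^{i}(t,x)$ follows from the stability estimate
$$\mathbb{E}\Big[\sup_{s\le T}\big|Y^{i;t,x}_{s}-Y^{i;t',x'}_{s}\big|^{2}\Big]\le C\,\rho\big((t,x),(t',x')\big),$$
with $\rho\big((t,x),(t',x')\big)\to0$ as $(t',x')\to(t,x)$, again via (\ref{3.16}) and the Lipschitz/growth hypotheses; a further Lipschitz stability estimate combined with (\ref{3.16}) in fact gives $u^{i}\in\mathcal{U}$, so that $\mathrm{B}_{i}u^{i}$ is meaningful.

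\textbf{Step 2: viscosity solution under (i)--(ii).} Assume $\gamma^{i}\ge0$ and $q\mapsto h^{(i)}(t,x,\vec{y},z,q)$ non-decreasing; since $u^{i}$ is only continuous, (\ref{eq1}) is read in the viscosity sense, the non-local terms being evaluated on test functions (which makes them finite: $\varphi\in C^{2}$, $|\beta|,|\gamma^{i}|\le C(1\wedge|e|)$, $\int_{E}(1\wedge|e|^{2})\lambda(\mathrm{d}e)<\infty$). For the subsolution property, fix $(t_{0},x_{0})\in[0,T)\times\mathbb{R}^{k}$ and $\varphi\in C^{1,2}$ with $u^{i}-\varphi$ having a maximum equal to $0$ at $(t_{0},x_{0})$, made global by the standard localisation ($u^{i}\le\varphi$ everywhere); $t_{0}=T$ is the terminal condition. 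As $u^{i}\ge\ell$, if $u^{i}(t_{0},x_{0})=\ell(t_{0},x_{0})$ the $\min$ in (\ref{eq1}) vanishes; otherwise Step~1's continuity gives $\delta>0$ with $Y^{i;t_{0},x_{0}}>\ell(\cdot,X^{t_{0},x_{0}})$ on $[t_{0},t_{0}+\delta]$, so (\ref{3.17})(iii) freezes the reflecting term and (\ref{3.17}) is a plain BSDE there. Comparing $Y^{i;t_{0},x_{0}}$ on $[t_{0},t_{0}+\delta]$ with the non-reflected BSDE of driver $f^{(i)}$ and terminal value $\varphi(t_{0}+\delta,X^{t_{0},x_{0}}_{t_{0}+\delta})\ge Y^{i;t_{0},x_{0}}_{t_{0}+\delta}$ --- the comparison theorem for BSDEs with jumps applying exactly because (i)--(ii) make the $\zeta$-increment of $f^{(i)}$ equal to $\int_{E}\theta\,\gamma^{i}(\cdot,e)(\zeta-\zeta')(e)\lambda(\mathrm{d}e)$ with $\theta\gamma^{i}\ge0$ --- then applying It\^o's formula to $\varphi(\cdot,X^{t_{0},x_{0}}_{\cdot})$ (whose drift is $(\partial_{t}+\mathcal{L})\varphi+\mathrm{K}_{i}\varphi$, with $\mathcal{L}\varphi=b^{\top}\mathrm{D}_{x}\varphi+\tfrac12\mathrm{Tr}(\sigma\sigma^{\top}\mathrm{D}^{2}_{xx}\varphi)$, the first-order correction inside $\mathrm{K}_{i}$ being what keeps the jump term integrable when $\lambda(E)=\infty$), and finally identifying $Z^{i},U^{i}$ at the contact point through the usual linearisation and Girsanov change of measure (whose jump density $1+\theta\gamma^{i}\ge1$ is positive by (i)--(ii) and a true martingale thanks to $|\gamma^{i}|\le C(1\wedge|e|)$ and $\int_{E}(1\wedge|e|^{2})\lambda(\mathrm{d}e)<\infty$), one obtains after dividing by $\delta$ and letting $\delta\downarrow0$
$$-\partial_{t}\varphi-\mathcal{L}\varphi-\mathrm{K}_{i}\varphi-h^{(i)}\big(t_{0},x_{0},\vec{u}(t_{0},x_{0}),(\sigma^{\top}\mathrm{D}_{x}\varphi)(t_{0},x_{0}),\mathrm{B}_{i}\varphi(t_{0},x_{0})\big)\le0$$
at $(t_{0},x_{0})$, which --- in the form of the definition recalled in the appendix, where (i)--(ii) let one pass between $\mathrm{B}_{i}\varphi$ and the non-local term involving $u^{i}$ --- is the subsolution inequality. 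The supersolution property is symmetric, at a global minimum of $u^{i}-\varphi$: $u^{i}(t_{0},x_{0})\ge\ell(t_{0},x_{0})$ is automatic, only the second term of the $\min$ must be $\ge0$, and since the reflecting term can now be active one first compares the reflected $Y^{i;t_{0},x_{0}}$ with the non-reflected BSDE of terminal value $\varphi(t_{0}+\delta,X^{t_{0},x_{0}}_{t_{0}+\delta})$, then runs the same It\^o--linearisation--Girsanov computation.

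\textbf{Expected main obstacle.} Two intertwined points are delicate. First, the contact-point identification of $Z^{i}$ and $U^{i}$ with $\sigma^{\top}\mathrm{D}_{x}\varphi$ and $\varphi(\cdot+\beta)-\varphi(\cdot)$: these, unlike $X^{t_{0},x_{0}}$ and $Y^{i;t_{0},x_{0}}$, have no pathwise right-limits at $s=t_{0}$, so the limit $\delta\downarrow0$ must go through the linearisation/Girsanov route, which is legitimate only under (i)--(ii) --- precisely the reason the monotonicity is re-imposed here, the jump density $1+\theta\gamma^{i}$ being not necessarily positive otherwise. Second, the bookkeeping forced by $\lambda(E)=\infty$: $\mathrm{K}_{i}\varphi$ and $\mathrm{B}_{i}\varphi$ must be finite (second-order cancellation, $\varphi\in C^{2}$, $|\beta|,|\gamma^{i}|\le C(1\wedge|e|)$, $\int_{E}(1\wedge|e|^{2})\lambda(\mathrm{d}e)<\infty$), $\mathrm{B}_{i}u^{i}$ must be finite ($u^{i}\in\mathcal{U}$, (\ref{2.7}), same integrability), It\^o's formula must be used with the small jumps compensated in $L^{2}$ so that $\mathrm{K}_{i}\varphi$ (with its first-order correction) is its finite-variation part, and the Girsanov density must be a genuine martingale (same bounds). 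One could instead pass to the limit in the penalised equations, but that route still requires (i)--(ii), since without them $u^{i}$ need not solve (\ref{eq1}) in the viscosity sense. The localisation reducing local to global extrema within the polynomial-growth class is routine and goes as in \cite{har}.
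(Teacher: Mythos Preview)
Your approach is essentially the paper's: the paper gives no self-contained proof of this proposition and simply defers to Harraj et al.\ \cite{har}, Theorem~4.6 (sending the upper barrier to $+\infty$), which is exactly the reference you invoke and whose argument you sketch. One small remark: in Step~1 you assert $u^{i}\in\mathcal{U}$, but for the viscosity statement in Barles et al.'s sense (the appendix definition, which is what this proposition uses) the non-local operators $\mathrm{K}_{i}$ and $\mathrm{B}_{i}$ act on the $C^{1,2}$ test function $\varphi$, not on $u^{i}$, so only $u^{i}\in\Pi^{c}_{g}$ is needed here; the stronger claim $u^{i}\in\mathcal{U}$ is the content of the paper's separate Proposition~4.2 and relies on the $L^{p}$ estimate of Lemma~4.1, so it should not be folded into this step.
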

The function $(u^{i})_{i=1,m}$ is a continuous viscosity solution (in Barles and al. 's sense, see Definition $6.1$ in the Appendix) of (\ref{eq1}).\\
For the proof see \cite{har} for the same way.\\
Finally, the solution $(u^{i})_{i=1,m}$ of (\ref{eq1}) is unique in the class $\Pi^{c}_{g}$.
\begin{remark}(see \cite{har})
Under the assumptions (\textbf{H1}), (\textbf{H2}), (\textbf{H3}), there exists a unique viscosity solution of (\ref{eq1}) in the class of functions satisfying
\begin{equation}
\lim\limits_{\left|x\right| \to +\infty}\left|u(t,x)\right| e^{-\tilde{A}[\log(\left|x\right|)]^{2}}=0
\end{equation} 
uniformly for $t\in[0,T]$, for some $\tilde{A}>0$.
\end{remark}
\section{Estimates and properties}
In this section we provide estimates for the functions $(u^{i})_{i=1,m}$ defined in (\ref{3.18}). Recall that,\\ $(\vec{Y}^{t,x},Z^{t,x},U^{t,x},K^{t,x}):=((Y^{i;t,x})_{i=1,m},
(Z^{i;t,x})_{i=1,m},(U^{i;t,x})_{i=1,m},(K^{i;t,x})_{i=1,m})$ is the unique solution of the RBSDE with jumps (\ref{3.17}).
\begin{lemma}
Under assumption (\textbf{H1}), (\textbf{H2}), (\textbf{H3}), for any $p\geq 2$ there exists two non-negative constants $C$ and $\rho$ such that,
\begin{equation}\label{4.22}
\mathbb{E}\left[\left\lbrace \displaystyle\int^{T}_{0} ds \left(\displaystyle\int_{E}\left|U^{t,x}_{s}(e)\right|^{2}\lambda(de)\right)\right\rbrace^{\frac{p}{2}}\right]=\mathbb{E}\left[\left\lbrace \displaystyle\int^{T}_{0} ds\|U^{t,x}_{s}\|^{2}_{\mathbb{L}^{2}_{m}(\lambda)}\right\rbrace^{\frac{p}{2}}\right]\leq C\left(1+\left|x\right|^{\rho}\right). 
\end{equation} 
\end{lemma}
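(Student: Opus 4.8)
The plan is to derive the $\mathbb{L}^p$-bound on the jump component $U^{t,x}$ by a standard localization-plus-It\^o argument applied to the RBSDE \eqref{3.17}, exploiting the already-known $\mathbb{L}^p$-estimate \eqref{3.16} on the forward process $X^{t,x}$ and the polynomial-growth representation $Y^{i;t,x}_s = u^i(s,X^{t,x}_s)$ from Proposition~3.3. First I would fix $(t,x)$ and $i$, and apply It\^o's formula to $|Y^{i;t,x}_s|^2$ between $0$ and $T$. The jump terms produce a bracket contribution $\int_0^T\!\!\int_E |U^{i;t,x}_s(e)|^2\,\mu(\mathrm{d}s,\mathrm{d}e)$, whose compensator is exactly $\int_0^T \|U^{i;t,x}_s\|^2_{\mathbb{L}^2_m(\lambda)}\,\mathrm{d}s$, together with the Brownian bracket $\int_0^T |Z^{i;t,x}_s|^2\,\mathrm{d}s$. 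Rearranging and using that the reflection term contributes $\int_0^T (Y^{i;t,x}_s - \ell(s,X^{t,x}_s))\,\mathrm{d}K^{i;t,x}_s = 0$ along with $Y^{i;t,x}_s \ge \ell(s,X^{t,x}_s)$, one controls the $\mathrm{d}K$ term by $\sup_s |Y^{i;t,x}_s|$ times $K^{i;t,x}_T$, i.e.\ by $\sup_s |Y_s|\big(|g(X_T)| + \int_0^T|f^{(i)}(s,X_s,\cdots)|\,\mathrm{d}s + \text{martingale increments}\big)$ after reading $K_T$ off the equation itself.

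Next I would raise everything to the power $p/2$, take expectations, and handle the stochastic-integral terms by the Burkholder–Davis–Gundy inequality: the Brownian martingale $\int_0^\cdot Z^{i}_s\,\mathrm{d}B_s$ and the compensated-Poisson martingale $\int_0^\cdot\!\!\int_E U^{i}_s(e)\,\tilde\mu(\mathrm{d}s,\mathrm{d}e)$ both have brackets dominated by $\big(\int_0^T |Z_s|^2\mathrm{d}s\big)^{p/2}$ and $\big(\int_0^T \|U_s\|^2_{\mathbb{L}^2_m(\lambda)}\mathrm{d}s\big)^{p/2}$ respectively. The Lipschitz property \eqref{2.15}(a) lets me bound $|f^{(i)}(s,X_s,Y_s,Z_s,U_s)| \le |f^{(i)}(s,X_s,0,0,0)| + C(|Y_s| + |Z_s| + \|U_s\|_{\mathbb{L}^2_m(\lambda)})$, so the generator contributes, after Young's inequality, a small multiple $\varepsilon$ of the $Z$- and $U$-norms raised to $p/2$ plus terms in $\sup_s|Y_s|^p$ and $\big(\int_0^T|f^{(i)}(s,X_s,0,0,0)|^2\mathrm{d}s\big)^{p/2}$. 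Choosing $\varepsilon$ small and absorbing, one obtains
\begin{equation*}
\mathbb{E}\left[\left(\int_0^T \|U^{t,x}_s\|^2_{\mathbb{L}^2_m(\lambda)}\,\mathrm{d}s\right)^{p/2}\right] \le C\,\mathbb{E}\left[\sup_{s\le T}|Y^{t,x}_s|^p + |g(X^{t,x}_T)|^p + \Big(\int_0^T |f^{(i)}(s,X^{t,x}_s,0,0,0)|^2\,\mathrm{d}s\Big)^{p/2}\right].
\end{equation*}
Finally, $\sup_{s\le T}|Y^{t,x}_s|^p = \sup_{s}|u^i(s,X^{t,x}_s)|^p \le C(1 + \sup_s|X^{t,x}_s|^{p'})$ because $u^i \in \Pi_g$, and then \eqref{3.16} gives $\mathbb{E}[\sup_s|X^{t,x}_s|^{p'}] \le C(1+|x|^{p'})$; likewise $g^i \in \mathcal{U} \subset \Pi_g$ and $f^{(i)}(\cdot,\cdot,0,0,0) \in \Pi^c_g$ with the square-integrability bound in \eqref{2.15}(b), which again through \eqref{3.16} is polynomial in $|x|$. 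Collecting the exponents into a single $\rho$ yields \eqref{4.22}.

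The main obstacle is the bootstrap structure of the absorption argument: to absorb the $U$-term one needs the finite a priori bound on $\mathbb{E}\big[\sup_s|Y_s|^p\big]$ and on the $Z$- and $K$-moments, so strictly one should first establish (or recall from \cite{hamaOuk}) that the full solution tuple lies in the $\mathbb{L}^p$-versions of the spaces $\mathcal{S}^2,\mathbb{H}^2,\mathcal{A}^2_c$ — this is where a localization by stopping times $\tau_n = \inf\{s : \int_0^s\|U_r\|^2\mathrm{d}r \ge n\}\wedge T$ is needed to justify taking expectations of the stochastic integrals before one knows the brackets are integrable, followed by Fatou / monotone convergence as $n\to\infty$. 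The other mild technical point is keeping track of the reflection term $\mathrm{d}K$ in the $\mathbb{L}^{p/2}$ estimate; this is handled exactly as in the classical reference estimates for RBSDEs (it does not require the Skorokhod condition beyond the sign of $Y - \ell$ and boundedness of $\ell(s,X_s)$ in $\mathbb{L}^p$, which follows once more from $\ell \in \mathcal{U}$ and \eqref{3.16}).
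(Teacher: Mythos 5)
Your plan is essentially the paper's own proof: It\^o's formula applied to $|Y^{i;t,x}|^{2}$, control of the $U$-term through the jump bracket, the Skorokhod condition plus a bound on $K_{T}$ read off the equation to handle the reflection term, BDG and Young's inequalities to absorb the $Z$- and $U$-contributions, and finally the polynomial growth of $u^{i}$, $g^{i}$, $\ell$ and $f^{(i)}(\cdot,\cdot,0,0,0)$ combined with the moment estimates (\ref{3.16}) on $X^{t,x}$. The only step you leave implicit is the passage from $\mathbb{E}\bigl[\bigl(\sum_{s\le r\le T}(\Delta Y_{r})^{2}\bigr)^{p/2}\bigr]$ to $\mathbb{E}\bigl[\bigl(\int_{s}^{T}\|U_{r}\|^{2}_{\mathbb{L}^{2}_{m}(\lambda)}\,dr\bigr)^{p/2}\bigr]$, which for $p>2$ is not just the identification of the compensator but the Lenglart--L\'epingle--Pratelli domination inequality that the paper cites explicitly (\cite{len}); with that reference supplied, your argument matches the paper's.
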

\begin{proof} 
First let us point out that since $X^{t,x}_{s}=x$ for $s\in[0,t]$ then, the uniqueness of the solution of RBSDE of (\ref{3.17}) implies that,
\begin{equation}\label{4.23}
Z^{t,x}_{s}=U^{t,x}_{s}=K^{t,x}_{s}=0,~ds\otimes d\mathbf{P}-\textrm{a.e. on}~[0,t]\times\Omega. 
\end{equation}
Next let $p\geq 2$ be fixed. Using the representation (\ref{3.18}), for any $i\in\{1,\ldots,m\}$ and $s\in[t,T]$ we have,
\begin{eqnarray}\label{4.24}
Y^{i;t,x}_{s} & = & g^{i}(X^{t,x}_{T})+\displaystyle\int^{T}_{s}f^{(i)}(r,X^{t,x}_{r},(u^{j}(X^{t,x}_{r}))_{j=1,m},Z^{i;t,x}_{r},U^{i;t,x}_{s})dr+
\mathrm{K}^{i;t,x}_{T}-\mathrm{K}^{i;t,x}_{s}\nonumber\\
&{}{}&-\displaystyle\int^{T}_{s}Z^{i;t,x}_{r}\mathrm{d}
\mathrm{B}_{r}-\displaystyle\int^{T}_{s}\displaystyle\int_{\mathrm{E}}\mathrm{U}^{i;t,x}
_{r}(e)\tilde{\mu}(\mathrm{d}r,\mathrm{d}e).
\end{eqnarray}
This implies that the system of RBSDEs with jumps (\ref{3.17}) turns into a decoupled one since the equations in (\ref{4.24}) are not related each other.\\
Next for any $i=1,\ldots,m$, the functions $u^{i}$, $g^{i}$ and $(t,x)\mapsto f^{(i)}(t,x,0,0,0)$ are of polynomial growth and finally $y\mapsto f^{(i)}(t,x,y,0,0)$ is Lipschitz uniformly w.r.t. $(t,x)$. Then for some $C$ and $\rho\geq 0$,
\begin{equation}\label{4.25}
\mathbb{E}\left[\left|g^{i}(X^{t,x}_{T})\right|^{p}+\left( \displaystyle\int^{T}_{s}\left|f^{(i)}(r,X^{t,x}_{r},(u^{j}(X^{t,x}_{r}))_{j=1,m},0,0)\right|^{2}dr\right)^{\frac{p}{2}}\right]\leq C\left(1+\left|x\right|^{\rho}\right). 
\end{equation} 
Let us now fix $i_{0}\in\{1,\ldots,m\}$,
$\forall s\in[t,T]$,
\begin{equation}\label{4.26}
\left
\{\begin{array}{ll}
(i)~Y^{i_{0},t,x}_{s}\in\mathcal{S}^{2}(\mathbb{R}),~Z^{i_{0},t,x}_{s}\in\mathbb{H}^{2}(\mathbb{R}^{d}),~
 K^{i_{0},t,x}_{s}\in\mathcal{A}^{2}_{c},~U^{i_{0},t,x}_{s}\in\mathbb{H}^{2}(\mathbb{L}^{2}(\lambda));\\
(ii)~Y^{i_{0},t,x}_{s}= g^{i_{0}}(X^{t,x}_{T})+\displaystyle\int^{T}_{s}f^{(i_{0})}(r,X^{t,x}_{r},(u^{j}(X^{t,x}_{r}))_{j=1,m},Z^{i_{0},t,x}_{r},U^{i_{0},t,x}_{r})dr+
K^{i_{0},t,x}_{T}-K^{i_{0},t,x}_{s}\nonumber\\
\qquad\qquad-\displaystyle\int^{T}_{s}Z^{i_{0},t,x}_{r}\mathrm{d}
\mathrm{B}_{r}-\displaystyle\int^{T}_{s}
\displaystyle\int_{\mathrm{E}}U^{i_{0},t,x}_{r}(e)\tilde{\mu}(\mathrm{d}r,\mathrm{d}e).\\
(iii)~Y^{i_{0},t,x}_{s}\geq \ell(s,X^{t,x}_{s})~\textrm{and}~ \displaystyle\int^{T}_{0}(Y^{i_{0},t,x}_{s}- \ell(s,X^{t,x}_{s}))\mathrm{d}K^{i_{0},t,x}_{s}=0. 
\end{array}
\right.
\end{equation}
Applying It\^o formula to $\left|Y^{i_{0},t,x}_{s}\right|^{2}$ between $s$ and $T$, we have
\begin{eqnarray}\label{4.24bis}
& {}{} &\left|Y^{i_{0},t,x}_{s}\right|^{2}+\displaystyle\int^{T}_{s}\left|Z^{i_{0},t,x}_{r}\right|^{2}\,dr+\sum_{s\leq r\leq T}(\Delta Y^{i_{0},t,x}_{r})^{2}\\
& {}{} &=\left|g^{i_{0}}(X^{t,x}_{T})\right|^{2}+2\displaystyle\int^{T}_{s}
Y^{i_{0},t,x}_{r}f^{(i_{0})}(r,X^{t,x}_{r},(u^{j}(X^{t,x}_{r}))_{j=1,m},Z^{i_{0},t,x}_{r},U^{i_{0},t,x}_{r})\,dr+2\displaystyle\int^{T}_{s}Y^{i_{0},t,x}_{r}\,dK^{i_{0},t,x}_{r}\nonumber \\
& {}{} &-2\displaystyle\int^{T}_{s}
\displaystyle\int_{\mathrm{E}}Y^{i_{0},t,x}_{r}U^{i_{0},t,x}_{r}\tilde{\mu}(\mathrm{d}r,\mathrm{d}e)-2\displaystyle\int^{T}_{s}Y^{i_{0},t,x}_{r}Z^{i_{0},t,x}_{r}\,dB_{r}.\nonumber
\end{eqnarray}
Notice that $Y^{i_{0},t,x}_{r}=u^{i_{0}}(r,X^{t,x}_{r})$ and we have that $|u^{i_{0}}(r,X^{t,x}_{r})|\leq C(1+|X^{t,x}_{r}|^q)$. Next let us set $\Sigma=1+\sup_{s\leq T}|X^{t,x}_{s}|$. Therefore $|Y^{i_{0},t,x}_{r}|\leq C_{q}\Sigma^{q}$ and $\left|g^{i_{0}}(X^{t,x}_{T})\right|\leq C_{q}\Sigma^{q}$.\\
By raising to the power $\frac{p}{2}$ and then taking expectation, it follows from (\ref{4.24bis}) and the fact of, there exists $C>0$ such that,\\ $\mathbb{E}\left[\left(\displaystyle\int^{T}_{s}\|U^{i_{0},t,x}_{r}\|^{2}_{\mathbb{L}^{2}(\lambda)}\,dr\right)^{\frac{p}{2}}\right]\leq C\mathbb{E}\left[\left(\sum_{s\leq r\leq T}(\Delta Y^{i_{0},t,x}_{r})^{2}\right)^{\frac{p}{2}}\right]$, (see \cite{len} p. 28-45),
\begin{eqnarray}\label{4.25ori}
& {}{} &\mathbb{E}\left[\left|Y^{i_{0},t,x}_{s}\right|^{p}\right]+\mathbb{E}\left[\left( \displaystyle\int^{T}_{s}\left|Z^{i_{0},t,x}_{r}\right|^{2}\,dr\right)^{\frac{p}{2}}\right] +\mathbb{E}\left[\left(\displaystyle\int^{T}_{s}\|U^{i_{0},t,x}_{r}\|^{2}_{\mathbb{L}^{2}(\lambda)}\,dr\right)^{\frac{p}{2}}\right]\nonumber\\
& {}{} &\leq 5^{\frac{p}{2}-1}\mathbb{E}\left[\left|g^{i_{0}}(X^{t,x}_{T})\right|^{p}\right]+5^{\frac{p}{2}-1}\mathbb{E}\left[\left(\displaystyle\int^{T}_{s}2\left|
Y^{i_{0},t,x}_{r}f^{(i_{0})}(r,X^{t,x}_{r},(u^{j}(X^{t,x}_{r}))_{j=1,m},Z^{i_{0},t,x}_{r},U^{i_{0},t,x}_{r})\right|\,dr\right)^{\frac{p}{2}}\right]\nonumber\\
& {}{} &+5^{\frac{p}{2}-1}\mathbb{E}\left[\left|\displaystyle\int^{T}_{s}Y^{i_{0},t,x}_{r}\,dK^{i_{0},t,x}_{r}\right|^{\frac{p}{2}}\right]\nonumber \\
& {}{} &+5^{\frac{p}{2}-1}\mathbb{E}\left[\left|\displaystyle\int^{T}_{s}
\displaystyle\int_{\mathrm{E}}2Y^{i_{0},t,x}_{r}U^{i_{0},t,x}_{r}\tilde{\mu}(\mathrm{d}r,\mathrm{d}e)\right|^{\frac{p}{2}}\right]+5^{\frac{p}{2}-1}\mathbb{E}\left[\left|\displaystyle\int^{T}_{s}2Y^{i_{0},t,x}_{r}Z^{i_{0},t,x}_{r}\,dB_{r}\right|^{\frac{p}{2}}\right].
\end{eqnarray}
For more comprehension,  we adopt the following scripture for inequality (\ref{4.25ori});\\ 
\begin{eqnarray}\label{4.26bis}
& {}{} &\mathbb{E}\left[\left|Y^{i_{0},t,x}_{s}\right|^{p}\right]+\mathbb{E}\left[\left( \displaystyle\int^{T}_{s}\left|Z^{i_{0},t,x}_{r}\right|^{2}\,dr\right)^{\frac{p}{2}}\right] +\mathbb{E}\left[\left(\displaystyle\int^{T}_{s}\|U^{i_{0},t,x}_{r}\|^{2}_{\mathbb{L}^{2}(\lambda)}\,dr\right)^{\frac{p}{2}}\right]\leq 5^{\frac{p}{2}-1}\mathbb{E}\left[\left|g^{i_{0}}(X^{t,x}_{T})\right|^{p}\right]\nonumber\\
& {}{} & +5^{\frac{p}{2}-1}T_{1}(s)+5^{\frac{p}{2}-1}T_{2}(s)+5^{\frac{p}{2}-1}T_{3}(s)+5^{\frac{p}{2}-1}T_{4}(s)
\end{eqnarray}
where,\\
\begin{eqnarray*}
T_{1}(s) & = & \mathbb{E}\left[\left(\displaystyle\int^{T}_{s}2\left|
Y^{i_{0},t,x}_{r}f^{(i_{0})}(r,X^{t,x}_{r},(u^{j}(X^{t,x}_{r}))_{j=1,m},Z^{i_{0},t,x}_{r},U^{i_{0},t,x}_{r})\right|\,dr\right)^{\frac{p}{2}}\right];\\ 
T_{2}(s) & = & \mathbb{E}\left[\left|\displaystyle\int^{T}_{s}Y^{i_{0},t,x}_{r}\,dK^{i_{0},t,x}_{r}\right|^{\frac{p}{2}}\right];\\ 
T_{3}(s) & = & \mathbb{E}\left[\left|\displaystyle\int^{T}_{s}
\displaystyle\int_{\mathrm{E}}2Y^{i_{0},t,x}_{r}U^{i_{0},t,x}_{r}\tilde{\mu}(\mathrm{d}r,\mathrm{d}e)\right|^{\frac{p}{2}}\right];\\ 
T_{4}(s) & = & \mathbb{E}\left[\left|\displaystyle\int^{T}_{s}2Y^{i_{0},t,x}_{r}Z^{i_{0},t,x}_{r}\,dB_{r}\right|^{\frac{p}{2}}\right].
\end{eqnarray*}
We will estimate $T_{1}(s)$, $T_{2}(s)$, $T_{3}(s)$ and $T_{4}(s)$, $\forall s\in[t,T]$.\\
(a) Before starting our estimations, let's linearize $f$ with respect to $(u^{j}(X^{t,x}_{r}))_{j=1,m}$ and $Z^{i_{0},t,x}_{r}$ i.e.\\
$f^{(i_{0})}(r,X^{t,x}_{r},(u^{j}(X^{t,x}_{r}))_{j=1,m},Z^{i_{0},t,x}_{r},U^{i_{0},t,x}_{r})=a^{t,x}_{r}Z^{i_{0},t,x}_{r}+b^{t,x}_{r}(u^{j}(X^{t,x}_{r}))_{j=1,m}+f^{(i_{0})}(r,X^{t,x}_{r},0,0,U^{i_{0},t,x}_{r})$;\\
where $a^{t,x}_{r}$ and $b^{t,x}_{r}$ are progressively measurable processes  respectively bounded by the Lipschitz constants of $f$ in $Z^{t,x}_{r}$ and $(u^{j}(X^{t,x}_{r}))_{j=1,m}$ i.e $|a^{t,x}_{r}|\leq C_{Z}$ and $|b^{t,x}_{r}|\leq\lambda_{1}$.\\
(b) We also take the fact that $f$ is Lipschitz in $U^{i_{0},t,x}_{r}$ i.e there exists a constant Lipschitz $\lambda_{2}$ such that $|f^{(i_{0})}(r,X^{t,x}_{r},0,0,U^{i_{0},t,x}_{r})|\leq|f^{(i_{0})}(r,X^{t,x}_{r},0,0,0)|+\lambda_{2}\|U^{i_{0},t,x}_{r}\|_{\mathbb{L}^{2}_{m}(\lambda)}$.\\
By combining (a) and (b) we have
\begin{eqnarray}\label{linea}
|f^{(i_{0})}(r,X^{t,x}_{r},(u^{j}(X^{t,x}_{r}))_{j=1,m},Z^{i_{0},t,x}_{r},U^{i_{0},t,x}_{r})|& \leq & |a^{t,x}_{r}Z^{i_{0},t,x}_{r}|+|b^{t,x}_{r}(u^{j}(X^{t,x}_{r}))_{j=1,m}|+|f^{(i_{0})}(r,X^{t,x}_{r},0,0,0)|\nonumber\\
& {}{} &+\lambda_{2}\|U^{i_{0},t,x}_{r}\|_{\mathbb{L}^{2}_{m}(\lambda)}.
\end{eqnarray}
Let's start our estimates.\\
\\
\underline{For $T_{1}(s)$}\\
By using (\ref{linea}), it follows that;
\begin{eqnarray}\label{4.25bis}
& {}{} &\displaystyle\int^{T}_{s}2\left|
Y^{i_{0},t,x}_{r}f^{(i_{0})}(r,X^{t,x}_{r},(u^{j}(X^{t,x}_{r}))_{j=1,m},Z^{i_{0},t,x}_{r},U^{i_{0},t,x}_{r})\right|\,dr\nonumber\\
& {}{} &\leq \displaystyle\int^{T}_{s}2\left|
Y^{i_{0},t,x}_{r}\left(a^{t,x}_{r}Z^{i_{0},t,x}_{r}\right)\right|\,dr +\displaystyle\int^{T}_{s}2\left|
Y^{i_{0},t,x}_{r}\left(b^{t,x}_{r}(u^{j}(X^{t,x}_{r}))_{j=1,m}\right)\right|\,dr+\lambda_{2}\displaystyle\int^{T}_{s}2\left|
Y^{i_{0},t,x}_{r}\left(U^{i_{0},t,x}_{r}\right)\right|dr\nonumber\\
& {}{} &+\displaystyle\int^{T}_{s}2\left|
Y^{i_{0},t,x}_{r}f^{(i_{0})}(r,X^{t,x}_{r},0,0,0)\right|\,dr\nonumber\\
& {}{} &\leq C^{2}_{q}C_{Z}T\epsilon^{-1}_{1}\Sigma^{2q}+\epsilon_{1}C_{Z}\displaystyle\int^{T}_{s}\left|Z^{i_{0},t,x}_{r}\right|^{2}\,dr+C^{2}_{q}\lambda_{1}T\epsilon^{-1}_{2}\Sigma^{2q}+\epsilon_{2}\lambda_{1}C^{2}_{q}T\Sigma^{2q}+C^{2}_{q}T\lambda_{2}\epsilon^{-1}_{3}\Sigma^{2q}\nonumber\\
& {}{} &+\lambda_{2}\epsilon_{3}\displaystyle\int^{T}_{s}\|U^{i_{0},t,x}_{r}\|^{2}_{\mathbb{L}^{2}_{m}(\lambda)}\,dr+C^{2}_{q}T\epsilon^{-1}_{4}\Sigma^{2q}+C^{2}_{q}T\epsilon_{4}\Sigma^{2q}.\nonumber\
\end{eqnarray}
By raising to the power $\frac{p}{2}$ and then taking expectation it follows that,\\
\newpage
\begin{eqnarray}\label{T1}
T_{1}(s) & \leq & 8^{\frac{p}{2}-1}CC^{p}_{q}\left\lbrace (C_{Z}T\epsilon^{-1}_{1})^{\frac{p}{2}}+(\lambda_{1}T\epsilon^{-1}_{2})^{\frac{p}{2}}+C^{p}_{q}(\epsilon_{2}\lambda_{1}T)^{\frac{p}{2}}+(T\lambda_{2}\epsilon^{-1}_{3})^{\frac{p}{2}}+(T\epsilon^{-1}_{4})^{\frac{p}{2}}\right.\nonumber\\
& {}{} & \left.+(T\epsilon_{4})^{\frac{p}{2}}\right\rbrace\left|1+|x|^{pq}\right|+8^{\frac{p}{2}-1}(\epsilon_{1}C_{Z})^{\frac{p}{2}}\mathbb{E}\left[\left(\displaystyle\int^{T}_{s}\left|Z^{i_{0},t,x}_{r}\right|^{2}\,dr\right)^{\frac{p}{2}}\right]\nonumber\\
& {}{} & +8^{\frac{p}{2}-1}(\lambda_{2}\epsilon_{3})^{\frac{p}{2}}\mathbb{E}\left[\left(\displaystyle\int^{T}_{s}\|U^{i_{0},t,x}_{r}\|^{2}_{\mathbb{L}^{2}_{m}(\lambda)}\,dr\right)^{\frac{p}{2}}\right].
\end{eqnarray} 
Before estimating $T_{2}(s)$, let us first give an estimate of $\mathbb{E}\left[|K^{i_{0},t,x}_{T}-K^{i_{0},t,x}_{s}|^{p}\right]$ which will serve us in that of $T_{2}(s)$.\\
\begin{eqnarray}
K^{i_{0},t,x}_{T}-K^{i_{0},t,x}_{s} &=& Y^{i_{0},t,x}_{s}-g^{i_{0}}(X^{t,x}_{T})-\displaystyle\int^{T}_{s}f^{(i_{0})}(r,X^{t,x}_{r},(u^{j}(X^{t,x}_{r}))_{j=1,m},Z^{i_{0},t,x}_{r},U^{i_{0},t,x}_{r})dr
\nonumber\\
& {}{} & +\displaystyle\int^{T}_{s}Z^{i_{0},t,x}_{r}\mathrm{d}
\mathrm{B}_{r}+\displaystyle\int^{T}_{s}
\displaystyle\int_{\mathrm{E}}U^{i_{0},t,x}_{r}(e)\tilde{\mu}(\mathrm{d}r,\mathrm{d}e).\nonumber
\end{eqnarray} 
By (\ref{linea}) and Cauchy-Schwartz inequality; it follows that,
\begin{eqnarray}
|K^{i_{0},t,x}_{T}-K^{i_{0},t,x}_{s}| & \leq & 2C_{q}\Sigma^{q}+C_{q}\lambda_{1}T\Sigma^{q}+C_{q}T\Sigma^{q}+C_{Z}\left(\displaystyle\int^{T}_{s}|Z^{i_{0},t,x}_{r}|^{2}\,dr\right)^{\frac{1}{2}}\nonumber\\
& {}{} &+\lambda_{2}\left(\displaystyle\int^{T}_{s}\|U^{i_{0},t,x}_{r}\|^{2}\,dr\right)^{\frac{1}{2}}+\displaystyle\int^{T}_{s}Z^{i_{0},t,x}_{r}\mathrm{d}
\mathrm{B}_{r}+\displaystyle\int^{T}_{s}
\displaystyle\int_{\mathrm{E}}U^{i_{0},t,x}_{r}(e)\tilde{\mu}(\mathrm{d}r,\mathrm{d}e).\nonumber
\end{eqnarray}
By raising to the power $p$, taking expectation and BDG inequality we have,
\begin{eqnarray}\label{KT}
\mathbb{E}\left[|K^{i_{0},t,x}_{T}-K^{i_{0},t,x}_{s}|^{p}\right]& \leq & 5^{p-1}CC^{p}_{q}\left\lbrace 2^{p}+(T\lambda_{1})^{p}+T^{p}\right\rbrace|1+|x|^{pq}|+5^{p-1}(C^{p}_{Z}+C_{p})\mathbb{E}\left(\displaystyle\int^{T}_{0}|Z^{i_{0},t,x}_{s}|^{2}\,ds\right)^{\frac{p}{2}}\nonumber\\
& {}{} &+5^{p-1}(\lambda^{p}_{2}+C_{p})\mathbb{E}\left\lbrace \displaystyle\int^{T}_{0} ds\|U^{i_{0},t,x}_{s}\|^{2}_{\mathbb{L}^{2}_{m}(\lambda)}\right\rbrace^{\frac{p}{2}}.
\end{eqnarray}

\underline{For $T_{2}(s)$}
\begin{eqnarray}\label{KT2}
\displaystyle\int^{T}_{s}|Y^{i_{0},t,x}_{s}|\mathrm{d}K^{i_{0},t,x}_{s} & \leq &  \displaystyle\int^{T}_{s}|(Y^{i_{0},t,x}_{s}-\ell(s,X^{t,x}_{s}))|\mathrm{d}K^{i_{0},t,x}_{s}+\displaystyle\int^{T}_{s}|\ell(s,X^{t,x}_{s})|\mathrm{d}K^{i_{0},t,x}_{s}\nonumber\\
{} & \leq & \sup_{s\leq T}|\ell(s,X^{t,x}_{s})|K^{i_{0},t,x}_{T}\nonumber\\
{} & \leq & \epsilon^{-1}_{5}\sup_{s\leq T}|\ell(s,X^{t,x}_{s})|^{2}+\epsilon_{5}(K^{i_{0},t,x}_{T})^{2}\nonumber\\
{} & \leq & \epsilon^{-1}_{5}C^{2}_{q}\Sigma^{2q}+\epsilon_{5}(K^{i_{0},t,x}_{T})^{2}.
\end{eqnarray} 
By using (\ref{KT}) and (\ref{KT2}); it follows that
\begin{eqnarray}
T_{2}(s) & \leq & 2^{\frac{p}{2}-1}CC^{p}_{q}(\epsilon^{-1}_{5})^{\frac{p}{2}}|1+|x|^{pq}|+2^{\frac{p}{2}-1}(\epsilon_{5})^{\frac{p}{2}}\mathbb{E}\left[(K^{i_{0},t,x}_{T})^{p}\right]\nonumber\\
{} & \leq & 2^{\frac{p}{2}-1}CC^{p}_{q}(\epsilon^{-1}_{5})^{\frac{p}{2}}|1+|x|^{pq}|+(\epsilon_{5})^{\frac{p}{2}}2^{\frac{p}{2}-1}7^{p-1}CC^{p}_{q}\left\lbrace 2^{p}+(T\lambda_{1})^{p}+T^{p}\right\rbrace|1+|x|^{pq}|+\nonumber\\
& {}{} &(\epsilon_{5})^{\frac{p}{2}}2^{\frac{p}{2}-1}7^{p-1}(C^{p}_{Z}+C_{p})\mathbb{E}\left(\displaystyle\int^{T}_{0}|Z^{i_{0},t,x}_{s}|^{2}\,ds\right)^{\frac{p}{2}}\nonumber\\
& {}{} &+(\epsilon_{5})^{\frac{p}{2}}2^{\frac{p}{2}-1}7^{p-1}(\lambda^{p}_{2}+C_{p})\mathbb{E}\left\lbrace \displaystyle\int^{T}_{0} ds\|U^{i_{0},t,x}_{s}\|^{2}_{\mathbb{L}^{2}_{m}(\lambda)}\right\rbrace^{\frac{p}{2}}\nonumber\\
T_{2}(s) & \leq & \left\lbrace 2^{\frac{p}{2}-1}CC^{p}_{q}(\epsilon^{-1}_{5})^{\frac{p}{2}}+(\epsilon_{5})^{\frac{p}{2}}2^{\frac{p}{2}-1}7^{p-1}CC^{p}_{q}(2^{p}+(T\lambda_{1})^{p}+T^{p})\right\rbrace \left(1+|x|^{pq}\right)\nonumber\\
& {}{} &+(\epsilon_{5})^{\frac{p}{2}}2^{\frac{p}{2}-1}7^{p-1}(C^{p}_{Z}+C_{p})\mathbb{E}\left(\displaystyle\int^{T}_{0}|Z^{i_{0},t,x}_{s}|^{2}\,ds\right)^{\frac{p}{2}}\nonumber\\
& {}{} &+(\epsilon_{5})^{\frac{p}{2}}2^{\frac{p}{2}-1}7^{p-1}(\lambda^{p}_{2}+C_{p})\mathbb{E}\left\lbrace \displaystyle\int^{T}_{0} ds\|U^{i_{0},t,x}_{s}\|^{2}_{\mathbb{L}^{2}_{m}(\lambda)}\right\rbrace^{\frac{p}{2}}.\nonumber
\end{eqnarray} 
\underline{For $T_{3}(s)$}\\
By BDG inequality,
\begin{eqnarray}
T_{3}(s) & \leq &  C_{p}\mathbb{E}\left(\displaystyle\int^{T}_{0}|Y^{i_{0},t,x}_{s}|^{2}|Z^{i_{0},t,x}_{s}|^{2}\,ds\right)^{\frac{p}{4}}\nonumber\\
{} & \leq & C_{p}\mathbb{E}\left(\sup_{s\leq T}|Y^{i_{0},t,x}_{s}|^{2}\displaystyle\int^{T}_{0}|Z^{i_{0},t,x}_{s}|^{2}\,ds\right)^{\frac{p}{4}}\nonumber\\
{} & \leq & C_{p}C^{p}_{q}\epsilon^{-1}_{6}(1+|x|^{pq})+C_{p}\epsilon_{6}\mathbb{E}\left(\displaystyle\int^{T}_{0}|Z^{i_{0},t,x}_{s}|^{2}\,ds\right)^{\frac{p}{2}}.
\end{eqnarray}
\underline{For $T_{4}(s)$}\\
By BDG inequality,
\begin{eqnarray}
T_{4}(s) & \leq &  C_{p}\mathbb{E}\left(\displaystyle\int^{T}_{0}|Y^{i_{0},t,x}_{s}|^{2}\|U^{i_{0},t,x}_{s}\|^{2}_{\mathbb{L}^{2}_{m}(\lambda)}\,ds\right)^{\frac{p}{4}}\nonumber\\
{} & \leq & C_{p}\mathbb{E}\left(\sup_{s\leq T}|Y^{i_{0},t,x}_{s}|^{2}\displaystyle\int^{T}_{0}\|U^{i_{0},t,x}_{s}\|^{2}_{\mathbb{L}^{2}_{m}(\lambda)}\,ds\right)^{\frac{p}{4}}\nonumber\\
{} & \leq & C_{p}C^{p}_{q}\epsilon^{-1}_{7}(1+|x|^{pq})+C_{p}\epsilon_{7}\mathbb{E}\left(\displaystyle\int^{T}_{0}\|U^{i_{0},t,x}_{s}\|^{2}_{\mathbb{L}^{2}_{m}(\lambda)}\,ds\right)^{\frac{p}{2}}.
\end{eqnarray}
Finally by taking estimation of $T_{1}(s)$, $T_{2}(s)$, $T_{3}(s)$, $T_{4}(s)$ and choosing $\epsilon_{1}$, $\epsilon_{2}$, $\epsilon_{3}$, $\epsilon_{4}$, $\epsilon_{5}$, $\epsilon_{6}$, $\epsilon_{7}$ such that;\\ 
$\{(\epsilon_{5})^{\frac{p}{2}}2^{\frac{p}{2}-1}7^{p-1}(C^{p}_{Z}+C_{p})+C_{p}\epsilon_{6}\}<1$,\\
$\{(\epsilon_{5})^{\frac{p}{2}}2^{\frac{p}{2}-1}7^{p-1}(\lambda^{p}_{2}+C_{p})+C_{p}\epsilon_{7}\}<1$,\\
and the sum of all coefficients of $(1+|x|^{pq})$ was small than $1$.\\ 
It follows then
\begin{equation}
\mathbb{E}\left[\left\lbrace \displaystyle\int^{T}_{0} ds\|U^{i_{0},t,x}_{s}\|^{2}_{\mathbb{L}^{2}_{m}(\lambda)}\right\rbrace^{\frac{p}{2}}\right]\leq C\left(1+\left|x\right|^{\rho}\right).
\end{equation} 
Where $\rho=pq$.\\
Finally since $i_{0}\in\{1,\ldots,m\}$ is arbitrary we then obtain the estimate (\ref{4.22}).
\end{proof}
\begin{proposition}
For any $i=1,\ldots,m$, $u^{i}$ belongs to $\mathcal{U}$.
\end{proposition}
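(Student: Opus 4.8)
The plan is to read everything off the Feynman--Kac identity $(\ref{3.18})$: taking $s=t$ there and using $X^{t,x}_{t}=x$, $X^{t,x'}_{t}=x'$, one gets $u^{i}(t,x)-u^{i}(t,x')=Y^{i;t,x}_{t}-Y^{i;t,x'}_{t}$, which is a deterministic number, so that for every $t\in[0,T]$
\[
\bigl|u^{i}(t,x)-u^{i}(t,x')\bigr|^{2}\leq\mathbb{E}\Bigl[\sup_{s\in[t,T]}\bigl|Y^{i;t,x}_{s}-Y^{i;t,x'}_{s}\bigr|^{2}\Bigr].
\]
The propositions recalled in Section~3 already give that each $u^{i}$ is continuous and of polynomial growth, i.e.\ $u^{i}\in\Pi^{c}_{g}$; so it only remains to bound the right-hand side by $C\bigl(1+|x|^{\rho}+|x'|^{\rho}\bigr)|x-x'|^{2}$ with $C,\rho\geq 0$ \emph{independent of $t$}, which is exactly membership of $u^{i}$ in $\mathcal{U}$. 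Throughout I write $\delta\vartheta:=\vartheta^{t,x}-\vartheta^{t,x'}$ for $\vartheta\in\{X,Y^{i},Z^{i},U^{i},\mathrm{K}^{i}\}$, and I would use that by $(\ref{4.24})$ each component of $(\ref{3.17})$ solves a one-dimensional equation with the now frozen coefficient $(r,y)\mapsto f^{(i)}\bigl(r,y,(u^{j}(y))_{j=1,m},\cdot,\cdot\bigr)$ along $X^{t,x}$ resp.\ $X^{t,x'}$.

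The core step is an It\^o computation on $|\delta Y^{i}_{s}|^{2}$ over $[s,T]$, which produces, besides the terminal term $|g^{i}(X^{t,x}_{T})-g^{i}(X^{t,x'}_{T})|^{2}$ and two martingale terms, the drift term $2\int_{s}^{T}\delta Y^{i}_{r}\,\delta f^{i}_{r}\,dr$, the reflection term $2\int_{s}^{T}\delta Y^{i}_{r}\,d\delta\mathrm{K}^{i}_{r}$, and the non-negative quadratic contributions $\int_{s}^{T}|\delta Z^{i}_{r}|^{2}dr+\sum_{s\leq r\leq T}(\Delta\delta Y^{i}_{r})^{2}$, which as in the previous lemma dominate $c\int_{s}^{T}\|\delta U^{i}_{r}\|^{2}_{\mathbb{L}^{2}(\lambda)}dr$ up to a martingale. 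For the terminal term, $(\textbf{H2})$ gives $|g^{i}(X^{t,x}_{T})-g^{i}(X^{t,x'}_{T})|\leq C(1+|X^{t,x}_{T}|^{p}+|X^{t,x'}_{T}|^{p})|\delta X_{T}|$, and Cauchy--Schwarz together with the moment estimates $(\ref{3.16})$ for $X^{t,x},X^{t,x'},\delta X$ turns this into $C(1+|x|^{\rho}+|x'|^{\rho})|x-x'|^{2}$. For the reflection term I would use the Skorokhod conditions in $(\ref{3.17})(iii)$: since $d\mathrm{K}^{i;t,x}$ is carried by $\{Y^{i;t,x}_{r}=\ell(r,X^{t,x}_{r})\}$ while $Y^{i;t,x'}_{r}\geq\ell(r,X^{t,x'}_{r})$ (and symmetrically),
\[
\int_{s}^{T}\delta Y^{i}_{r}\,d\delta\mathrm{K}^{i}_{r}\leq\sup_{r\leq T}\bigl|\ell(r,X^{t,x}_{r})-\ell(r,X^{t,x'}_{r})\bigr|\,\bigl(\mathrm{K}^{i;t,x}_{T}+\mathrm{K}^{i;t,x'}_{T}\bigr),
\]
and then $(\textbf{H1})$, the bounds $(\ref{3.16})$, the $L^{p}$-estimate of $\mathrm{K}^{i;t,x}_{T}$ (and $\mathrm{K}^{i;t,x'}_{T}$) obtained exactly as in $(\ref{KT})$ of the previous proof --- which itself rests on $(\ref{4.22})$ --- and H\"older's inequality would give a term of the wanted form. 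For the drift term I would split $\delta f^{i}_{r}$ into: the $(t,x)$-dependence of $f^{(i)}$ at frozen $(y,z,\zeta)$, controlled by $(\textbf{H3})(ii)$ and, through the $\gamma^{i}$ in $(\ref{2.13})$, by $(\ref{2.12})(ii)$ combined with $\int_{E}(1\wedge|e|)^{2}\lambda(de)<\infty$ and with $\|U^{i;t,x}_{r}\|_{\mathbb{L}^{2}(\lambda)}$ (this is where $(\ref{4.22})$ re-enters); the $y$-argument $(u^{j})_{j}$, which by property $(\ref{2.15})(a)$ of $f^{(i)}$ is $\leq C\sum_{j}|\delta Y^{j}_{r}|$; and the parts in $\delta Z^{i}_{r},\delta U^{i}_{r}$. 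The first part I would absorb via Young's inequality against $|\delta Y^{i}_{r}|^{2}$ (a Gronwall term), leaving only squared quantities estimated by $(\ref{3.16})$ and $(\ref{4.22})$; the $y$-part I keep as a Gronwall term; the $\delta Z^{i},\delta U^{i}$-parts I absorb, again by Young, into the quadratic contributions.

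Taking $\sup_{s\in[t,T]}$, then expectation, treating the martingale terms by the Burkholder--Davis--Gundy inequality (absorbed into the left-hand side and into the $\int|\delta Z^{i}|^{2}$, $\int\|\delta U^{i}\|^{2}$ terms, whose $L^{1}$-bounds $\leq C(1+|x|^{\rho}+|x'|^{\rho})|x-x'|^{2}$ are produced by the same computation), and finally summing over $i=1,\dots,m$ to close the mild coupling carried by the $y$-variable, Gronwall's lemma would give
\[
\sum_{i=1}^{m}\mathbb{E}\Bigl[\sup_{s\in[t,T]}|\delta Y^{i}_{s}|^{2}\Bigr]\leq C\bigl(1+|x|^{\rho}+|x'|^{\rho}\bigr)|x-x'|^{2},
\]
with $C,\rho$ not depending on $t$; specialising to $s=t$ yields $|u^{i}(t,x)-u^{i}(t,x')|\leq C(1+|x|^{\rho/2}+|x'|^{\rho/2})|x-x'|$ uniformly in $t$, and together with continuity and polynomial growth this places $u^{i}$ in $\mathcal{U}$. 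The main obstacle is the reflection term and, more generally, the bookkeeping of the polynomial weights: since $\ell$, $g^{i}$, $h^{(i)}$ and $\gamma^{i}$ are only \emph{locally} Lipschitz with polynomial weights --- this being precisely the non-monotone, infinite-L\'evy-measure regime --- no uniform Lipschitz constant is available, so every product of the type ``polynomial in $\sup_{r}|X_{r}|$ times $|\delta X_{r}|$'' or ``$\sup_{r}|\ell(r,X^{t,x}_{r})-\ell(r,X^{t,x'}_{r})|$ times $\mathrm{K}_{T}$'' must be split by H\"older so that the polynomial weight comes out in front while the genuine factor $|x-x'|$, to the right power, survives; this is what forces one to use the moment bounds $(\ref{3.16})$ of all orders together with the $L^{p}$-control $(\ref{4.22})$ of $\int_{0}^{T}\|U^{t,x}_{s}\|^{2}_{\mathbb{L}^{2}(\lambda)}ds$ from the previous lemma, and one must check in particular that the $\gamma^{i}$-term genuinely contributes a factor $|x-x'|$ and not mere boundedness.
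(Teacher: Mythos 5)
Your proposal is correct and follows essentially the same route as the paper: It\^o's formula applied to $|\vec{Y}^{t,x}_{s}-\vec{Y}^{t,x'}_{s}|^{2}$, the terminal term handled by $(\textbf{H2})$ and the moment estimates $(\ref{3.16})$, the reflection term reduced via the Skorokhod condition to $\sup_{r}|\Delta\ell_{r}|$ times $\mathrm{K}_{T}$, the drift split into the four increments in $x$, $y$, $z$ and the $\gamma^{i}$-integral with the cross term $\int_{E}U^{t,x'}_{r}(e)\Delta\gamma_{r}(e)\lambda(de)$ controlled by the higher-moment bound $(\ref{4.22})$ of Lemma 4.1, and Gronwall to conclude at $s=t$ through $(\ref{3.18})$. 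The only (harmless) deviations are that you additionally pass through $\sup_{s}$ and BDG, which the paper avoids since pointwise-in-$s$ expectations suffice, and that you justify the bound on $\mathbb{E}[(\Delta K_{T})^{2}]$ via the $L^{p}$-estimate of $K_{T}$ rather than simply naming it a constant $C'$ as the paper does.
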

\begin{proof}
Let $x$ and $x'$ be elements of $\mathbb{R}^{k}$. Let $(\vec{Y}^{t,x},Z^{t,x},U^{t,x},K^{t,x})$ $(\textrm{resp. }(\vec{Y}^{t,x'},Z^{t,x'},U^{t,x'},K^{t,x'}))$ be the solution of the RBSDE with jumps (\ref{3.17}) associated with $f(s,X^{t,x}_{s},y,\eta,\zeta,g(X^{t,x}_{T}))$\\
$(\textrm{resp. }f(s,X^{t,x'}_{s},y,\eta,\zeta,g(X^{t,x'}_{T})))$. Applying It\^o formula to $\left|\vec{Y}^{t,x}-\vec{Y}^{t,x'}\right|^{2}$ between $s$ and $T$, we have
\begin{eqnarray}\label{4.31}
& {}{} &\left|\vec{Y}^{t,x}_{s}-\vec{Y}^{t,x'}_{s}\right|^{2}+\displaystyle\int^{T}_{s}\left|\Delta Z_{r}\right|^{2}\,dr+\sum_{s\leq r\leq T}(\Delta_{r}\vec{Y}^{t,x}_{r})^{2}\\
& {}{} &=\left|g(X^{t,x}_{T})-g(X^{t,x'}_{T})\right|^{2}+2\displaystyle\int^{T}_{s}
<\left(\vec{Y}^{t,x}_{s}-\vec{Y}^{t,x'}_{s}\right),\Delta f(r)>\,dr+2\displaystyle\int^{T}_{s}\left(\vec{Y}^{t,x}_{r}
-\vec{Y}^{t,x'}_{r}\right)\,
d\left(\Delta K_{r}\right)\nonumber \\
& {}{} &-2\displaystyle\int^{T}_{s}
\displaystyle\int_{\mathrm{E}}\left(\vec{Y}^{t,x}_{r}
-\vec{Y}^{t,x'}_{r}\right)\left(\Delta U_{r}(e)\right)\tilde{\mu}(\mathrm{d}r,\mathrm{d}e)-2\displaystyle\int^{T}_{s} \left(\vec{Y}^{t,x}_{r}-\vec{Y}^{t,x'}_{r}\right)\left(\Delta Z_{r}\right)\,dB_{r};\nonumber
\end{eqnarray}
and taking expectation we obtain: $\forall s\in[t,T]$,
\begin{eqnarray}\label{4.33}
& {}{} &\mathbb{E}\left[\left|\vec{Y}^{t,x}_{s}-\vec{Y}^{t,x'}_{s}\right|^{2}+\displaystyle\int^{T}_{s}\left|\Delta Z_{r}\right|^{2}\,dr+\displaystyle\int^{T}_{s}\|\Delta U_{r}\|^{2}_{\mathbb{L}^{2}(\lambda)}\,dr\right]\\
& {}{} &\leq\mathbb{E}\left[\left|g(X^{t,x}_{T})-g(X^{t,x'}_{T})\right|^{2}+2\displaystyle\int^{T}_{s}
<\left(\vec{Y}^{t,x}_{s}-\vec{Y}^{t,x'}_{s}\right),\Delta f(r)>\,dr\right]\nonumber\\
& {}{} &\qquad\qquad+\mathbb{E}\left[2\displaystyle\int^{T}_{s}\left(\vec{Y}^{t,x}_{r}
-\vec{Y}^{t,x'}_{r}\right)\,
d\left(\Delta K_{r}\right)\right],\nonumber 
\end{eqnarray}
where the processes $\Delta X_{r}$, $\Delta Y_{r}$, $\Delta f(r)$, $\Delta K_{r}$, $\Delta Z_{r}$, $\Delta U_{r}$ and $\Delta \ell_{r}$ are defined as follows: $\forall r\in[t,T]$,\\
$\Delta f(r):=((\Delta f^{(i)}(r))_{i=1,m}=(f^{(i)}(r,X^{i;t,x}_{r},\vec{Y}^{t,x}_{r},Z^{i;t,x}_{r},U^{i;t,x}_{r})-f^{(i)}(r,X^{i;t,x'}_{r},\vec{Y}^{i;t,x'}_{r},Z^{i;t,x'}_{r},U^{i;t,x'}_{r}))_{i=1,m}$,
$\Delta X_{r}=X^{t,x}_{r}-X^{t,x'}_{r}$, $\Delta Y(r)=\vec{Y}^{t,x}_{r}-\vec{Y}^{t,x'}_{r}=(Y^{j;t,x}_{r}-Y^{j;t,x'}_{r})_{j=1,m}$,\\
$\Delta K_{r}=K^{t,x}_{r}-K^{t,x'}_{r}$, $\Delta Z_{r}=Z^{t,x}_{r}-Z^{t,x'}_{r}$, $\Delta U_{r}=U^{t,x}_{r}-U^{t,x'}_{r}$ and $\Delta\ell_{r}=\left(\ell(r,X^{t,x}_{r}
)-\ell(r,X^{t,x'}_{r})\right)$
($<\cdot,\cdot>$ is the usual scalar product on $\mathbb{R}^{m}$).
Now we will give an estimation of each three terms of the second member of inequality (\ref{4.33}).\\
$\bullet$ As for any $i\in\{1,\ldots,m\}$ $g^{i}$ belongs to $\mathcal{U}$; therefore\\
\begin{eqnarray}
\mathbb{E}\left[\left|g(X^{t,x}_{T})-g(X^{t,x'}_{T})\right|^{2}\right] & \leq & C\left|X^{t,x}_{T}-X^{t,x'}_{T}\right|^{2}(1+\left|X^{t,x}_{T}\right|^{2p}+\left|X^{t,x'}_{T}\right|^{2p})\nonumber\\
& \leq & \mathbb{E}\left[\left|x-x'\right|^{2}(1+\left|(X^{t,x}_{T}-x)+x\right|^{2p}+\left|(X^{t,x'}_{T}-x')+x'\right|^{2p}\right],\nonumber
\end{eqnarray}
and by subsequently using the triangle inequality, the relation of proposition $3.1$ and the fact that
 $$(a+b){^p}\leq 2^{p-1}(a^{p}+b^{p}).$$
\begin{equation}\label{4.34}
\mathbb{E}\left[\left|g(X^{t,x}_{T})-g(X^{t,x'}_{T})\right|^{2}\right]\leq C\left|x-x'\right|^{2}(1+\left|x\right|^{2p}+\left|x'\right|^{2p}),
\end{equation} 
$\bullet$ using (iii) of (\ref{3.17}): $\mathbb{E}\left[2\displaystyle\int^{T}_{s}\left(\vec{Y}^{t,x}_{r}
-\vec{Y}^{t,x'}_{r}\right)\,
d\left(\Delta K_{r}\right)\right]$ can be replaced by \\
$$\mathbb{E}\left[2\displaystyle\int^{T}_{s}\left(\ell(r,X^{t,x}_{r}
)-\ell(r,X^{t,x'}_{r})\right)\,
d\left(\Delta K_{r}\right)\right].$$\\
Now by  (\textbf{H1}) and Cauchy-Schwartz inequality we obtain:
\begin{equation}\label{4.35}
\mathbb{E}\left[\sup_{0\leq t\leq T}(\Delta\ell_{t})^{2}\right]\times\mathbb{E}\left[\left(\Delta K_{T}\right)^{2}\right]\leq 2CC'\left|x-x'\right|^{2}(1+\left|x\right|^{2p}+\left|x'\right|^{2p});
\end{equation}
where $C'=\mathbb{E}\left[\left(\Delta K_{T}\right)^{2}\right]$.\\  
$\bullet$ To complete our estimation of (\ref{4.33}) we need to deal with $\mathbb{E}\left[2\displaystyle\int^{T}_{s}
<\left(\vec{Y}^{t,x}_{s}-\vec{Y}^{t,x'}_{s}\right),\Delta f(r)>\,dr\right].$
Taking into account the expression of $f^{(i)}$ given by (\ref{2.13}) we then split $\Delta f(r)$ in the follows way: for $r\leq T$,
$$\Delta f(r)=(\Delta f(r))_{i=1,m}=\Delta_{1}(r)+\Delta_{2}(r)+\Delta_{3}(r)+\Delta_{4}(r)=(\Delta^{i}_{1}(r)+\Delta^{i}_{2}(r)+\Delta^{i}_{3}(r)+\Delta^{i}_{4}(r))_{i=1,m},$$
where  for any $i=1,\ldots,m$, 
\begin{eqnarray*}
\Delta^{i}_{1}(r) & = & h^{(i)}\left(r,X^{t,x}_{r},\vec{Y}^{t,x}_{r},Z^{i;t,x}_{r},\displaystyle\int_{E}\gamma^{i}(r,X^{t,x}_{r},e)U^{i;t,x}_{r}(e)\lambda(de)\right)\\
&{}{}& -h^{(i)}\left(r,X^{t,x'}_{r},\vec{Y}^{t,x}_{r},Z^{i;t,x}_{r},\displaystyle\int_{E}\gamma^{i}(r,X^{t,x}_{r},e)U^{i;t,x}_{r}(e)\lambda(de)\right);\\ 
\Delta^{i}_{2}(r) & = & h^{(i)}\left(r,X^{t,x'}_{r},\vec{Y}^{t,x}_{r},Z^{i;t,x}_{r},\displaystyle\int_{E}\gamma^{i}(r,X^{t,x}_{r},e)U^{i;t,x}_{r}(e)\lambda(de)\right)\\
&{}{}& -h^{(i)}\left(r,X^{t,x'}_{r},\vec{Y}^{t,x'}_{r},Z^{i;t,x}_{r},\displaystyle\int_{E}\gamma^{i}(r,X^{t,x}_{r},e)U^{i;t,x}_{r}(e)\lambda(de)\right);\\ 
\Delta^{i}_{3}(r) & = & h^{(i)}\left(r,X^{t,x'}_{r},\vec{Y}^{t,x'}_{r},Z^{i;t,x}_{r},\displaystyle\int_{E}\gamma^{i}(r,X^{t,x}_{r},e)U^{i;t,x}_{r}(e)\lambda(de)\right)\\
&{}{}& -h^{(i)}\left(r,X^{t,x'}_{r},\vec{Y}^{t,x'}_{r},Z^{i;t,x'}_{r},\displaystyle\int_{E}\gamma^{i}(r,X^{t,x}_{r},e)U^{i;t,x}_{r}(e)\lambda(de)\right);\\ 
\Delta^{i}_{4}(r) & = & h^{(i)}\left(r,X^{t,x'}_{r},\vec{Y}^{t,x'}_{r},Z^{i;t,x'}_{r},\displaystyle\int_{E}\gamma^{i}(r,X^{t,x}_{r},e)U^{i;t,x}_{r}(e)\lambda(de)\right)\\
&{}{}& -h^{(i)}\left(r,X^{t,x'}_{r},\vec{Y}^{t,x'}_{r},Z^{i;t,x'}_{r},\displaystyle\int_{E}\gamma^{i}(r,X^{t,x'}_{r},e)U^{i;t,x'}_{r}(e)\lambda(de)\right).
\end{eqnarray*}
By Cauchy-Schwartz inequality, the inequality $2ab\leq\epsilon a^{2}+\frac{1}{\epsilon}b^{2}$, the relation (2.11) and the estimate (\ref{3.16}) we have: 
\begin{eqnarray}\label{4.36}
\mathbb{E}\left[2\displaystyle\int^{T}_{s}
\scriptstyle<\Delta Y(r),\Delta_{1}(r)>\,dr\right] & \leq & \mathbb{E}\left[\frac{1}{\epsilon}\int^{T}_{s}\scriptstyle{|\Delta Y(r)|^{2}\,dr+C^{2}\epsilon}\displaystyle\int^{T}_{s}\scriptstyle{|X^{t,x}_{r}-X^{t,x'}_{r}|^{2}(1+|X^{t,x}_{r}|^{p}+|X^{t,x'}_{r}|^{p})^{2}\,dr}\right]\nonumber\\
& \leq & \mathbb{E}\left[\frac{1}{\epsilon}\int^{T}_{s}|\Delta Y(r)|^{2}\,dr\right]+C^{2}\epsilon|x-x'|^{2}(1+|x|^{p}+|x'|^{p})^{2}.
\end{eqnarray}
Besides since $h^{(i)}$ is Lipschitz w.r.t. $(y,z,q)$ then,
\begin{equation}\label{4.37}
\mathbb{E}\left[2\displaystyle\int^{T}_{s}<\Delta Y(r),\Delta_{2}(r)>\,dr\right]\leq 2C\mathbb{E}\left[\int^{T}_{s}|\Delta Y(r)|^{2}\,dr\right],
\end{equation}
and
\begin{equation}\label{4.38}
\mathbb{E}\left[2\displaystyle\int^{T}_{s}<\Delta Y(r),\Delta_{3}(r)>\,dr\right]\leq\mathbb{E}\left[\frac{1}{\epsilon}\int^{T}_{s}|\Delta Y(r)|^{2}\,dr+C^{2}\epsilon\int^{T}_{s}|\Delta Z(r)|^{2}\,dr\right].
\end{equation}
It remains to obtain a control of the last term. But for any $s\in[t,T]$ we have,
\begin{eqnarray}\label{4.39}
& {}{}& \mathbb{E}\left[2\displaystyle\int^{T}_{s}<\Delta Y(r),\Delta_{4}(r)>\,dr\right]\\
& \leq & 2C\mathbb{E}\left[\int^{T}_{s}|\Delta Y(r)|\,dr\times \left|\int_{E}\left(\gamma(r,X^{t,x}_{r},e)U^{t,x}_{r}(e)-\gamma(r,X^{t,x'}_{r},e)U^{t,x'}_{r}(e)\right)\,\lambda(de)\right|\right]\nonumber.
\end{eqnarray}
Next by splitting the crossing terms as follows
$\gamma(r,X^{t,x}_{r},e)U^{t,x}_{r}(e)-\gamma(r,X^{t,x'}_{r},e)U^{t,x'}_{r}(e)=\Delta U_{s}(e)\gamma(s,X^{t,x}_{s},e)+U^{t,x'}_{s}\left(\gamma(s,X^{t,x}_{s},e)-\gamma(s,X^{t,x'}_{s},e)\right)$\\
and setting $\Delta \gamma_{s}(e):=\left(\gamma(s,X^{t,x}_{s},e)-\gamma(s,X^{t,x'}_{s},e)\right)$,\\
we obtain,
\begin{eqnarray}\label{4.40}
\mathbb{E}\left[2\displaystyle\int^{T}_{s}<\Delta Y(r),\Delta_{4}(r)>\,dr\right]& \leq & 2C\mathbb{E}\left[\int^{T}_{s}|\scriptstyle\Delta Y(r)|\times\left(\displaystyle\int_{E}\scriptstyle(|U^{t,x'}_{r}(e)\Delta\gamma_{r}(e)|+|\Delta U_{r}(e)\gamma(r,X^{t,x}_{r},e)|)\,\lambda(de)\right)\,dr\right]\nonumber\\
& \leq & \frac{2}{\epsilon}\mathbb{E}\left[\int^{T}_{s}|\Delta Y(r)|^{2}\,dr\right]+C^{2}\epsilon\mathbb{E}\left[\int^{T}_{s}\left(\int_{E}(|U^{t,x'}_{r}(e)\Delta\gamma_{r}(e)|\lambda(de)\right)^{2}\,dr\right]\nonumber\\
& {}{} &+C^{2}\epsilon\mathbb{E}\left[\int^{T}_{s}\left(\int_{E}(|\Delta U_{r}(e)\gamma(r,X^{t,x}_{r},e)|\lambda(de)\right)^{2}\,dr\right].
\end{eqnarray}
By Cauchy-Schwartz inequality, (\ref{2.12}) and (\ref{3.16}), and the result of Lemma 4.1 it holds: 
\begin{eqnarray}\label{4.41}
\mathbb{E}\left[\int^{T}_{s}\left(\int_{E}(|U^{t,x'}_{r}(e)\Delta\gamma_{r}(e)|\lambda(de)\right)^{2}\,dr\right] & \leq & \mathbb{E}\left[\int^{T}_{s}\,dr\left(\int_{E}|U^{t,x'}_{r}(e)|^{2}\lambda(de)\right)\left(\int_{E}|\Delta\gamma_{r}(e)|^{2}\lambda(de)\right)\right]\nonumber\\
&\leq &  C\mathbb{E}\left[\{\scriptstyle\sup_{r\in[t,T]}|X^{t,x}_{r}-X^{t,x'}_{r}|^{2}(1+\sup_{r\in[t,T]}|X^{t,x}_{r}|^{p}+|X^{t,x'}_{r}|^{p})^{2}\,dr\}\right]\nonumber\\
& {}{} &\times \mathbb{E}\left[\int^{T}_{s}\,dr\left(\int_{E}|U^{t,x'}_{r}(e)|^{2}\lambda(de)\right)\right]\nonumber\\
& \leq & C\sqrt{\mathbb{E}\left[\{\scriptstyle\sup_{r\in[t,T]}|X^{t,x}_{r}-X^{t,x'}_{r}|^{4}(1+\sup_{r\in[t,T]}|X^{t,x}_{r}|^{p}+|X^{t,x'}_{r}|^{p})^{4}\}\right]}\nonumber\\
& {}{} &\times\sqrt{\mathbb{E}\left[\left\lbrace \int^{T}_{s}\,dr\left(\int_{E}|U^{t,x'}_{r}(e)|^{2}\lambda(de)\right)\right\rbrace^{2}\right]}\nonumber\\
& \leq & C\left|x-x'\right|^{2}(1+\left|x\right|^{2p}+\left|x'\right|^{2p}).
\end{eqnarray}
For some exponent $p$. On the other hand using once more Cauchy-Schwartz inequality and (\ref{2.12})-(i) we get
\begin{eqnarray}\label{4.42}
\mathbb{E}\left[\int^{T}_{s}\left(\int_{E}(\scriptstyle|\Delta U_{r}(e)\gamma(r,X^{t,x}_{r},e)|\lambda(de)\right)^{2}\,dr\right] & \leq & \mathbb{E}\left[\int^{T}_{s}\,dr\left(\int_{E}(\scriptstyle|\Delta U_{r}(e)|^{2}\lambda(de)\right)\left(\int_{E}|\gamma(r,X^{t,x}_{r},e)|^{2}\lambda(de)\right)\right]\nonumber\\
& \leq & C\mathbb{E}\left[\int^{T}_{s}\,dr\left(\int_{E}(|\Delta U_{r}(e)|^{2}\lambda(de)\right)\right].
\end{eqnarray}
Taking now into account inequalities (\ref{4.36})-(\ref{4.42}) we obtain:
\begin{eqnarray*}\label{4.43}
& {}{} &\mathbb{E}\left[\left|\vec{Y}^{t,x}_{s}-\vec{Y}^{t,x'}_{s}\right|^{2}+\displaystyle\int^{T}_{s}\left|\Delta Z_{r}\right|^{2}\,dr+\displaystyle\int^{T}_{s}\|\Delta U_{r}\|^{2}_{\mathbb{L}^{2}(\lambda)}\,dr\right]\\
& {}{} &\leq\mathbb{E}\left[\left|g(X^{t,x}_{T})-g(X^{t,x'}_{T})\right|^{2}+2\displaystyle\int^{T}_{s}
<\left(\vec{Y}^{t,x}_{s}-\vec{Y}^{t,x'}_{s}\right),\Delta f(r)>\,dr\right]\\
& {}{} &\qquad\qquad+\mathbb{E}\left[2\displaystyle\int^{T}_{s}\left(\vec{Y}^{t,x}_{r}
-\vec{Y}^{t,x'}_{r}\right)\,
d\left(\Delta K_{r}\right)\right]\\
& \leq & \left|x-x'\right|^{2}(1+\left|x\right|^{2p}+\left|x'\right|^{2p})(C+2CC'+C^{2}\epsilon+C^{3}\epsilon)+\left(\frac{3}{\epsilon}+2C\right)\mathbb{E}\left[\int^{T}_{s}|\Delta Y(r)|^{2}\,dr\right]\\
&{}{}&+C^{2}\epsilon\mathbb{E}\left[\int^{T}_{s}|\Delta Z(r)|^{2}\,dr\right]+C^{3}\epsilon\mathbb{E}\left[\int^{T}_{s}\,dr\left(\int_{E}(|\Delta U_{r}(e)|^{2}\lambda(de)\right)\right].
\end{eqnarray*}
Choosing now $\epsilon$ small enough we deduce the existence of a constant $C\geq 0$ such that for any $s\in[t,T]$,\\
$\mathbb{E}\left[|\Delta Y(s)|^{2}\right]\leq C\left|x-x'\right|^{2}(1+\left|x\right|^{2p}+\left|x'\right|^{2p})+\mathbb{E}\left[\displaystyle\int^{T}_{s}|\Delta Y(r)|^{2}\,dr\right]$\\
and by Gronwall lemma this implies that for any $s\in[t,T]$,\\
$$\mathbb{E}\left[|\Delta Y(s)|^{2}\right]\leq C\left|x-x'\right|^{2}(1+\left|x\right|^{2p}+\left|x'\right|^{2p}).$$
Finally in taking $s=t$ and considering (\ref{3.18}) we obtain the desired result.
\end{proof}

\begin{remark}
This result give also estimate of $U$ where we use the function $h^{(i)}$ $\forall~i=1,\ldots,m$ contrary in estimate (\ref{4.22}). 
\end{remark}

\begin{corollary}
For $u^{i}\in\mathcal{U}$ $\forall i=1,\ldots,m$ $B_{i}u^{i}$ defined in (\ref{2.2}) is well posed since the functions $\beta$ and $(\gamma_{i})_{i=1,m}$ verify (\ref{2.7}) and (\ref{2.12}) respectively.
\end{corollary}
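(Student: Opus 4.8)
The plan is to verify that, for each fixed $(t,x)\in[0,T]\times\mathbb{R}^{k}$ and each $i\in\{1,\dots,m\}$, the integrand in the definition (\ref{2.2}) of $\mathrm{B}_{i}u^{i}(t,x)$ is dominated by a $\lambda$-integrable function of $e$. Absolute convergence of the integral then shows that $\mathrm{B}_{i}u^{i}(t,x)$ is a well-defined finite real number for every $(t,x)$, which is exactly the assertion.

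First I would invoke the membership $u^{i}\in\mathcal{U}$ (Proposition 4.2): by the definition of $\mathcal{U}$ there exist nonnegative constants $C,p$ such that $|u^{i}(t,y)-u^{i}(t,y')|\le C(1+|y|^{p}+|y'|^{p})|y-y'|$ for all $t,y,y'$. Specializing to $y=x+\beta(t,x,e)$ and $y'=x$ gives
\[
\bigl|u^{i}(t,x+\beta(t,x,e))-u^{i}(t,x)\bigr|\le C\bigl(1+|x+\beta(t,x,e)|^{p}+|x|^{p}\bigr)\,|\beta(t,x,e)|.
\]
Now I would use (\ref{2.7})-(i), namely $|\beta(t,x,e)|\le C(1\wedge|e|)\le C$, which yields $|x+\beta(t,x,e)|\le|x|+C$ and hence bounds the prefactor $1+|x+\beta(t,x,e)|^{p}+|x|^{p}$ by a constant $C_{x}$ depending polynomially on $|x|$ but not on $e$. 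Substituting back and using $|\beta(t,x,e)|\le C(1\wedge|e|)$ once more gives $|u^{i}(t,x+\beta(t,x,e))-u^{i}(t,x)|\le C_{x}(1\wedge|e|)$.

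Next I would combine this with the structural bound (\ref{2.12})-(i), $|\gamma^{i}(t,x,e)|\le C(1\wedge|e|)$, to obtain
\[
\bigl|\gamma^{i}(t,x,e)\bigl(u^{i}(t,x+\beta(t,x,e))-u^{i}(t,x)\bigr)\bigr|\le C_{x}\,(1\wedge|e|)^{2}=C_{x}\,(1\wedge|e|^{2}).
\]
Since $\lambda$ integrates $e\mapsto 1\wedge|e|^{2}$ by hypothesis, the dominating function is $\lambda$-integrable, so the integral in (\ref{2.2}) converges absolutely and $\mathrm{B}_{i}u^{i}(t,x)$ is well posed. Since $i$ and $(t,x)$ were arbitrary, this finishes the argument.

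I do not expect a genuine obstacle: the statement follows immediately from the Lipschitz-type growth estimate available for elements of $\mathcal{U}$ together with the assumptions on $\beta$ and $\gamma^{i}$. The only point deserving a line of care is that the polynomial prefactor $C_x$ must be uniform in $e$ so that it can be pulled out of the $e$-integral — and this uniformity is precisely what the bound $|\beta(t,x,e)|\le C$ from (\ref{2.7})-(i) provides.
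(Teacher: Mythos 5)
Your proposal is correct and follows essentially the same route as the paper: dominate the integrand using the $\mathcal{U}$-estimate for $u^{i}$, the bounds $|\gamma^{i}(t,x,e)|\le C(1\wedge|e|)$ and $|\beta(t,x,e)|\le C(1\wedge|e|)$, and conclude from the integrability of $1\wedge|e|^{2}$ under $\lambda$. Your handling of the polynomial prefactor (bounding $|x+\beta(t,x,e)|\le|x|+C$ uniformly in $e$) is in fact a touch cleaner than the paper's, which instead expands $(a+b)^{p}\le 2^{p-1}(a^{p}+b^{p})$ and invokes integrability of $1\wedge|e|^{p}$ for $p\ge 2$; both arguments are sound.
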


\begin{proof}
The main point to notice is that $\lambda$ integrates $(1\wedge |e|^{p})$ $\forall p\geq 2$.\\
We have that
\begin{eqnarray*}
|\mathrm{B}_{i}u^{i}(t,x)| & \leq & \displaystyle\int_{\mathrm{E}}|\gamma^{i}(t,x,e)|\times|(u^{i}(t,x+\beta(t,x,e))-u^{i}(t,x))|\,\lambda(de)\\
&{}{}&\leq\displaystyle\int_{\mathrm{E}}C(1\wedge|e|)|\beta(t,x,e)|(1+|x+\beta(t,x,e)|^{p}+|x|^{p})\,\lambda(de)\\
&{}{}&\leq C^{2}(1+|x|^{p}(1+2^{p-1}))\displaystyle\int_{\mathrm{E}}C(1\wedge|e|^{2})\,\lambda(de)+(2^{\frac{p-1}{p}}C^{\frac{p+2}{p}})^{p}\displaystyle\int_{\mathrm{E}}C(1\wedge|e|^{p})\,\lambda(de).
\end{eqnarray*}
Which finish the proof.
\end{proof}

Now by remark 3, the last estimate of $U$ confirm the following result;
\begin{proposition}
For any $i=1,\ldots,m$, $(t,x)\in[0,T]\times\mathbb{R}^{k}$,
\begin{equation}
U^{i;t,x}_{s}(e)=u^{i}(s,X^{t,x}_{s-}+\beta(s,X^{t,x}_{s-},e))-u^{i}(s,X^{t,x}_{s-}),~~d\mathbb{P}\otimes ds\otimes d\lambda-\text{a.e. on}~\Omega\times[t,T]\times E.
\end{equation}

\end{proposition}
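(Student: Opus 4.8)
The plan is to identify $U^{i;t,x}$ by matching the jumps of the two representations of $Y^{i;t,x}$: the one coming from the RBSDE with jumps (\ref{3.17}), and the Markovian one (\ref{3.18}). Fix $i\in\{1,\ldots,m\}$ and $(t,x)\in[0,T]\times\mathbb{R}^{k}$; recall from Proposition 4.2 that $u^{i}\in\mathcal{U}$, so in particular $u^{i}$ is continuous.

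First I would read off the jumps of $Y^{i;t,x}$ from (\ref{3.17}). Written in forward form from time $t$, the drift $\int f^{(i)}\,\mathrm{d}r$ and the Brownian integral $\int Z^{i;t,x}_{r}\,\mathrm{d}B_{r}$ are continuous, $K^{i;t,x}\in\mathcal{A}^{2}_{c}$ is continuous, and the compensator $\nu(\mathrm{d}r,\mathrm{d}e)=\mathrm{d}r\,\lambda(\mathrm{d}e)$ is time-continuous, so the jumps of $Y^{i;t,x}$ come only from the integral against $\tilde{\mu}$. Thus at a jump time $s\in[t,T]$ of $\mu$ with mark $e$,
\[
\Delta Y^{i;t,x}_{s}=U^{i;t,x}_{s}(e).
\]
Applying the same reasoning to (\ref{2.4}) — the integrals $\int b\,\mathrm{d}r$ and $\int\sigma\,\mathrm{d}B_{r}$ are continuous, and the compensator of $\mu$ is time-continuous — gives $\Delta X^{t,x}_{s}=\beta(s,X^{t,x}_{s-},e)$ at such a time, whence, by (\ref{3.18}) and continuity of $u^{i}$,
\[
\Delta Y^{i;t,x}_{s}=u^{i}(s,X^{t,x}_{s})-u^{i}(s,X^{t,x}_{s-})=u^{i}\big(s,X^{t,x}_{s-}+\beta(s,X^{t,x}_{s-},e)\big)-u^{i}(s,X^{t,x}_{s-}).
\]
Comparing these, I would conclude that, $\mathbb{P}$-a.s., $U^{i;t,x}_{s}(e)=\tilde{U}_{s}(e)$ for $\mu(\mathrm{d}s,\mathrm{d}e)$-a.e. $(s,e)\in[t,T]\times E$, where $\tilde{U}_{s}(e):=u^{i}(s,X^{t,x}_{s-}+\beta(s,X^{t,x}_{s-},e))-u^{i}(s,X^{t,x}_{s-})$; i.e. $\int_{[t,T]}\int_{E}\mathbf{1}_{\{U^{i;t,x}_{s}(e)\neq\tilde{U}_{s}(e)\}}\,\mu(\mathrm{d}s,\mathrm{d}e)=0$ a.s.

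Then I would upgrade this $\mu$-a.e. identity to a $\mathrm{d}\mathbb{P}\otimes\mathrm{d}s\otimes\mathrm{d}\lambda$-a.e. one. Since $U^{i;t,x}$ is predictable and $\tilde{U}$ is predictable as well ($s\mapsto X^{t,x}_{s-}$ is left-continuous hence predictable, $\beta$ is measurable, $u^{i}$ is continuous), the random field $\mathbf{1}_{\{U^{i;t,x}_{s}(e)\neq\tilde{U}_{s}(e)\}}$ is nonnegative and predictable, and the compensation formula gives
\[
\mathbb{E}\Big[\int_{[t,T]}\!\int_{E}\mathbf{1}_{\{U^{i;t,x}_{s}(e)\neq\tilde{U}_{s}(e)\}}\,\mathrm{d}s\,\lambda(\mathrm{d}e)\Big]=\mathbb{E}\Big[\int_{[t,T]}\!\int_{E}\mathbf{1}_{\{U^{i;t,x}_{s}(e)\neq\tilde{U}_{s}(e)\}}\,\mu(\mathrm{d}s,\mathrm{d}e)\Big]=0,
\]
which is precisely the claimed identity. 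The main obstacle I anticipate is the careful bookkeeping that isolates the jump of $Y^{i;t,x}$ due to $\tilde{\mu}$ — one must check that neither the Stieltjes integral against the continuous non-decreasing process $K^{i;t,x}$ nor the compensator part inside $\tilde{\mu}$ contributes a jump — together with the predictability check on $\tilde{U}$ needed to replace $\mu$ by $\mathrm{d}s\,\lambda(\mathrm{d}e)$ in the last equality; the rest is routine given (\ref{3.18}) and Proposition 4.2.
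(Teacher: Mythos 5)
Your argument is correct, but it is genuinely not the route the paper takes: the paper opens this proof by asserting that the jump-matching technique of Hamad\`ene--Morlais \emph{cannot} be used when $\lambda(E)=\infty$, on the grounds that $U^{i;t,x}$ is only square-integrable and one therefore cannot split $\int\!\!\int U\,\tilde{\mu}$ into $\int\!\!\int U\,\mu-\int\!\!\int U\,ds\,\lambda(de)$. It consequently truncates the L\'evy measure at level $1/k$ (so that $\lambda_{k}$ is finite and the splitting is legitimate for $^{k}U^{t,x}$), obtains the representation for the truncated RBSDE, proves the $L^{2}$-convergence (\ref{4.51}) of $(^{k}Y,{}^{k}Z,{}^{k}U1_{\{|e|\geq 1/k\}},{}^{k}K)$ to $(Y,Z,U,K)$ together with pointwise convergence of $u^{k}$ to $u$ under the uniform local Lipschitz bound (\ref{4.48}), and passes to the limit. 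Your proof short-circuits all of this by never splitting the compensated integral: you use only that the purely discontinuous square-integrable martingale $\int\!\!\int U\,\tilde{\mu}$ has jump $\int_{E}U_{s}(e)\mu(\{s\},de)$ at time $s$ (the compensator $ds\,\lambda(de)$ being atomless in time), which holds for any $U\in\mathbb{H}^{2}(\mathbb{L}^{2}(\lambda))$ whether or not $\lambda$ is finite, and then the compensation formula applied to the nonnegative predictable field $1_{\{U\neq\tilde{U}\}}$. That jump identity (Jacod--Shiryaev, Ch.~II, construction of $W*(\mu-\nu)$ as the purely discontinuous local martingale with prescribed jumps) is precisely the step the paper's author believed to fail in the infinite-activity case, so it is the one point you should state and cite explicitly rather than leave under "careful bookkeeping"; once it is in place your argument is complete, considerably shorter, and dispenses with the truncation/convergence machinery. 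What the paper's longer route buys is that its intermediate objects ($u^{k}$, $^{k}U^{t,x}$, the convergence (\ref{4.51})) are reused later in the uniqueness part of Theorem 5.2, whereas your argument yields only the stated identity.
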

\begin{proof}
First note that since the measure $\lambda$ is note finite, then we cannot use the same technique as in \cite{hamaMor} where the authors use the jumps of processes and (\ref{3.18}).\\
In our case $U^{i;t,x}$ is only square integrable and not necessarily integrable w.r.t. $d\mathbb{P}\otimes ds\otimes d\lambda$. Therefore we first begin by truncating the L\'evy measure as the same way in \cite{hama}.\\
\textbf{Step 1: Truncation of the L\'evy measure}\\
For any $k\geq 1$, let us first introduce a new Poisson random measure $\mu_{k}$ (obtained from the truncation of $\mu$) and its associated compensator $\nu_{k}$ as follows:
$$\mu_{k}(ds,de)=1_{\{|e|\geq\frac{1}{k}\}}\mu(ds,de)~~\text{and }\nu_{k}(ds,de)=1_{\{|e|\geq\frac{1}{k}\}}\nu(ds,de).$$ 
Which means that, as usual, $\tilde{\mu_{k}}(ds,de):=(\mu_{k}-\nu_{k})(ds,de)$, is the associated random martingale measure.\\
The main point to notice is that 
\begin{eqnarray}
\lambda_{k}(E)=\displaystyle\int_{E}\,\lambda_{k}(de)& = &\displaystyle\int_{E}1_{\{|e|\geq\frac{1}{k}\}}\,\lambda(de)\nonumber\\
{}{}&=&\displaystyle\int_{\{|e|\geq\frac{1}{k}\}}\,\lambda(de)\nonumber\\
{}{}&=&\lambda(\{|e|\geq\frac{1}{k}\})<\infty.
\end{eqnarray}
As in \cite{hama}, let us introduce the process $^{k}X^{t,x}$ solving the following standard SDE of jump-diffusion type:
\begin{eqnarray}
& {}{} & ^{k}X^{t,x}_{s}=x+\displaystyle\int^{s}_{t}b(r,^{k}X^{t,x}_{r})\,dr+\displaystyle\int^{s}_{t}\sigma(r,^{k}X^{t,x}_{r})\,dB_{r}\nonumber\\
& {}{} &\qquad\qquad+\displaystyle\int^{s}_{t}\displaystyle\int_{\mathrm{E}}\beta(r,^{k}X^{t,x}_{r-},e)\tilde{\mu}_{k}\,(dr,de),~~~t\leq s\leq T;~^{k}X^{t,x}_{r}=x~\text{if }s\leq t.\nonumber\\
\end{eqnarray}
 Note that thanks to the assumptions on $b$, $\sigma$, $\beta$ the process $^{k}X^{t,x}$ exists and is unique. Moreover it satisfies the same estimates as in (\ref{3.16}) since $\lambda_{k}$ is just a truncation at the origin of $\lambda$ which integrates $(1\wedge|e|^{2})_{e\in E}$.\\
On the other hand let us consider the following Markovian RBSDE with jumps
 \begin{equation}\label{4.46}
\left
\{\begin{array}{ll}
(i)~\mathbb{E}\left[\sup_{s\leq T}\left|^{k}Y^{t,x}_{s}\right|^{2}+\displaystyle\int^{T}_{s}\left|^{k}Z^{t,x}_{r}\right|^{2}\,dr+\displaystyle\int^{T}_{s}\|^{k}U^{t,x}_{r}\|^{2}_{\mathbb{L}^{2}(\lambda_{k})}\,dr\right]<\infty\\
(ii)~^{k}{Y}^{t,x}:=(^{k}Y^{i,t,x})_{i=1,m}\in\mathcal{S}^{2}(\mathbb{R}^{m}),~^{k}Z^{t,x}:=(^{k}Z^{i,t,x})_{i=1,m}\in\mathbb{H}^{2}(\mathbb{R}^{m\times d}),\\
 ^{k}K^{t,x}:=(^{k}K^{i,t,x})_{i=1,m}\in\mathcal{A}^{2}_{c},~ ^{k}U^{t,x}:=(^{k}U^{i,t,x})_{i=1,m}\in\mathbb{H}^{2}(\mathbb{L}^{2}_{m}(\lambda_{k}));\\
(iii)~^{k}Y^{t,x}_{s}=g(^{k}X^{t,x}_{T})+\displaystyle\int^{T}_{s}f_{\mu_{k}}(r,^{k}X^{t,x}_{r},^{k}Y^{t,x}_{r},^{k}Z^{t,x}_{r},^{k}U^{t,x}_{r})\,dr+
^{k}K^{t,x}_{T}-^{k}K^{t,x}_{s}\\
\quad\quad\quad\quad\quad\quad
\quad\quad-\displaystyle\int^{T}_{s}\left\lbrace ^{k}Z^{t,x}_{r}\,d
B_{r}+\displaystyle\int_{\mathrm{E}}^{k}U^{t,x}_{r}(e)\tilde{\mu}_{k}(dr,de)\right\rbrace ,\quad s\leq T;\\
(iv)~^{k}Y^{i;t,x}_{s}\geq \ell(s,^{k}X^{t,x}_{s})~\textrm{and}~ \displaystyle\int^{T}_{0}(^{k}Y^{i;t,x}_{s}- \ell(s,^{k}X^{t,x}_{s}))\,d(^{k}K^{i;t,x}_{s})=0. 
\end{array}
\right.
\end{equation}
Finally let us introduce the following functions $(f^{(i)})_{i=1,m}$ defined by:
$\forall (t,x,y,z,\zeta)\in[0,T]\times\mathbb{R}^{k}\times\mathbb{R}^{m}\times\mathbb{R}^{m\times d}\times\mathbb{L}^{2}_{m}(\lambda_{k}),\\f_{\mu_{k}}(t,x,y,z,\zeta)=(f^{(i)}_{\mu_{k}}(t,x,y,z_{i},\zeta_{i}))_{i=1,m}:=\left( h^{(i)}\left(t,x,y,z,\displaystyle\int_{E}\gamma^{i}(t,x,e)\zeta_{i}(e)\lambda_{k}(de)\right)\right)_{i=1,m}$.
First let us emphasize that this latter RBSDE is related to the filtration $(\mathcal{F}^{k}_{s})_{s\leq T}$ generated by the Brownian motion and the independent random measure $\mu_{k}$. However this point does not raise major issues since for any $s\leq T$, $\mathcal{F}^{k}_{s}\subset \mathcal{F}_{s}$ and thanks to the relationship between $\mu$ and $\mu_{k}$.\\
Next by the properties of the functions $b$, $\sigma$, $\beta$ and by the same opinions of proposition $3.2$ and proposition $3.3$, there exists an unique quadriple $(^{k}Y^{t,x},^{k}K^{t,x},^{k}Z^{t,x},^{k}U^{t,x})$ solving (\ref{4.46}) and there also exists a function $u^{k}$ from $[0,T]\times \mathbb{R}^{k}$ into $\mathbb{R}^{m}$ of $\Pi^{c}_{g}$ such that
\begin{equation}\label{4.47}
\forall s\in[t,T],~~^{k}Y^{t,x}:=u^{k}(s,^{k}X^{t,x}),~\mathbb{P}-a.s.
\end{equation}  
Moreover as in proposition $4.2$, there exists positive constants $C$ and $p$ wich do not depend on $k$ such that:
\begin{equation}\label{4.48}
\forall t,x,x',~~|u^{k}(t,x)-u^{k}(t,x')|\leq C\left|x-x'\right|(1+\left|x\right|^{p}+\left|x'\right|^{p}).
\end{equation}
Finally as $\lambda_{k}$ is finite then we have the following relationship between the process $^{k}U^{t,x}:=(^{k}U^{i;t,x})_{i=1,m}$ and the deterministics functions $u^{k}:=(u^{k}_{i})_{i=1,m}$ (see \cite{hamaMor}): $\forall i=1,\ldots,m$; 
$$^{k}U^{i;t,x}_{s}(e)=u^{k}_{i}(s,^{k}X^{t,x}_{s-}+\beta(s,^{k}X^{t,x}_{s-},e))-u^{k}_{i}(s,^{k}X^{t,x}_{s-}),~~d\mathbb{P}\otimes ds\otimes d\lambda_{k}-a.e.~\text{on }\Omega\times[t,T]\times E.$$
This is mainly due to the fact that $^{k}U^{t,x}$ belongs to $\mathbb{L}^{1}\cap\mathbb{L}^{2}(ds\otimes d\mathbb{P}\otimes d\lambda_{k})$ since $\lambda_{k}(E)<\infty$ and then we can split the stochastic integral w.r.t. $\tilde{\mu}_{k}$ in (\ref{4.46}). Therefore for all $i=1,\ldots,m$,
\begin{equation}\label{4.49}
^{k}U^{i;t,x}_{s}(e)1_{\{|e|\geq \frac{1}{k}\}}=(u^{k}_{i}(s,^{k}X^{t,x}_{s-}+\beta(s,^{k}X^{t,x}_{s-},e))-u^{k}_{i}(s,^{k}X^{t,x}_{s-}))1_{\{|e|\geq \frac{1}{k}\}},~~d\mathbb{P}\otimes ds\otimes d\lambda_{k}-a.e.~\text{on }\Omega\times[t,T]\times E.
\end{equation}
\end{proof}
\textbf{Step 2: Convergence of the auxiliary processes}\\
Let's now prove the following convergence result;
\begin{eqnarray}\label{4.51}
&{}{}&\mathbb{E}\left[\sup_{s\leq T}\left|Y^{t,x}_{s}-^{k}Y^{t,x}_{s}\right|^{2}+(K^{t,x}_{T}-^{k}K^{t,x}_{T})^{2}+\displaystyle\int^{T}_{0}\left|Z^{t,x}_{s}-^{k}Z^{t,x}_{s}\right|^{2}\,ds
\right.\nonumber\\
&{}{}&\left.
+\displaystyle\int^{T}_{0}\,ds\displaystyle\int_{E}\lambda(de)\left|U^{t,x}_{s}(e)-^{k}U^{t,x}_{s}(e)1_{\{|e|\geq \frac{1}{k}\}}\right|^{2}\right]\substack{\longrightarrow\\ k\longrightarrow+\infty}0;
\end{eqnarray}
where $(Y^{t,x},K^{t,x},Z^{t,x},U^{t,x})$ is solution of the RBSDE with jumps (\ref{3.17}).\\
First note that the following convergence result was established in \cite{hama} 
\begin{equation}\label{4.50}
\mathbb{E}\left[\sup_{s\leq T}\left|X^{t,x}_{s}-^{k}X^{t,x}_{s}\right|^{2}\right]\substack{\longrightarrow\\ k\longrightarrow+\infty}0.
\end{equation}
We now focus on (\ref{4.51}). Note that we can apply Ito's formula, even if the RBSDEs are related to filtrations and Poisson random measures which are not the same, since:\\
(i) $\mathcal{F}^{k}_{s}\subset\mathcal{F}_{s}$, $\forall s\leq T$;\\
(ii) for any $s\leq T$, $\displaystyle\int^{s}_{0}\displaystyle\int_{\mathrm{E}}^{k}U^{i;t,x}(e)\tilde{\mu}_{k}\,(dr,de)=\displaystyle\int^{s}_{0}\displaystyle\int_{\mathrm{E}}^{k}U^{i;t,x}(e)1_{\{|e|\geq\frac{1}{k}\}}\tilde{\mu}\,(dr,de)$ and then the first $(\mathcal{F}^{k}_{s})_{s\leq T}-$martingale is also an $(\mathcal{F}_{s})_{s\leq T}-$martingale.
$\forall s\in[0,T]$,
\begin{eqnarray}
& {}{} &\left|\vec{Y}^{t,x}_{s}-^{k}Y^{t,x}_{s}\right|^{2}+\displaystyle\int^{T}_{0}\left|Z^{t,x}_{s}-^{k}Z^{t,x}_{s}\right|^{2},ds+\sum_{s\leq r\leq T}(^{k}\Delta_{r}\vec{Y}^{t,x}_{r})^{2}\nonumber\\
& {}{} &=\left|g(X^{t,x}_{T})-g(^{k}X^{t,x}_{T})\right|^{2}+2\displaystyle\int^{T}_{s}
\left(\vec{Y}^{t,x}_{r}-^{k}Y^{t,x}_{r}\right)\times ^{k}\Delta f(r)\,dr+2\displaystyle\int^{T}_{s}\left(\vec{Y}^{t,x}_{r}
-^{k}Y^{t,x}_{r}\right)\,
d\left(^{k}\Delta K_{r}\right)\nonumber \\
& {}{} &-2\displaystyle\int^{T}_{s}
\displaystyle\int_{\mathrm{E}}\left(\vec{Y}^{t,x}_{r}
-^{k}Y^{t,x}_{r}\right)\left(^{k}\Delta U_{r}(e)\right)\tilde{\mu}(\mathrm{d}r,\mathrm{d}e)-2\displaystyle\int^{T}_{s} \left(\vec{Y}^{t,x}_{r}-^{k}\vec{Y}^{t,x}_{r}\right)\left(^{k}\Delta Z_{r}\right)\,dB_{r};\nonumber
\end{eqnarray}
and taking expectation we obtain: $\forall s\in[t,T]$,

\begin{eqnarray}\label{4.53}
& {}{} &\mathbb{E}\left[\left|\vec{Y}^{t,x}_{s}-^{k}Y^{t,x}_{s}\right|^{2}+\left|^{k}\Delta K_{T}\right|^{2}+\displaystyle\int^{T}_{0}\left\lbrace\left|Z^{t,x}_{s}-^{k}Z^{t,x}_{s}\right|^{2}+\displaystyle\int_{E}\left|U^{t,x}_{s}-^{k}U^{t,x}_{s}1_{\{|e|\geq\frac{1}{k}\}}\right|^{2}\,\lambda(de)\right\rbrace\,ds\right]\nonumber\\
& {}{} &\leq\mathbb{E}\left[\left|g(X^{t,x}_{T})-g(^{k}X^{t,x}_{T})\right|^{2}+2\displaystyle\int^{T}_{s}
\left(\vec{Y}^{t,x}_{r}-^{k}Y^{t,x}_{r}\right)\times ^{k}\Delta f(r)\,dr\right]+\mathbb{E}\left[\sup_{s\leq T}\left|^{k}\Delta\ell_{s}\right|^{2}\right];\nonumber\\
\end{eqnarray}
where the processes $^{k}\Delta X_{r}$, $^{k}\Delta Y_{r}$, $^{k}\Delta f(r)$, $^{k}\Delta K_{r}$, $^{k}\Delta Z_{r}$, $^{k}\Delta U_{r}$ and $^{k}\Delta \ell_{r}$ are defined as follows: $\forall r\in[0,T]$,\\
$^{k}\Delta f(r):=((^{k}\Delta f^{(i)}(r))_{i=1,m}=(f^{(i)}(r,X^{t,x}_{r},\vec{Y}^{t,x}_{r},Z^{i;t,x}_{r},U^{i;t,x}_{r})-f^{(i)}_{k}(r,^{k}X^{t,x}_{r},^{k}Y^{t,x}_{r},^{k}Z^{t,x}_{r},^{k}U^{t,x}_{r}))_{i=1,m}$,
$^{k}\Delta X_{r}=X^{t,x}_{r}-^{k}X^{t,x}_{r}$, $^{k}\Delta Y(r)=\vec{Y}^{t,x}_{r}-^{k}Y^{t,x}_{r}=(Y^{j;t,x}_{r}-^{k}Y^{j;t,x}_{r})_{j=1,m}$,\\
$^{k}\Delta K_{r}=K^{t,x}_{r}-^{k}K^{t,x}_{r}$, $^{k}\Delta Z_{r}=Z^{t,x}_{r}-^{k}Z^{t,x}_{r}$, $^{k}\Delta U_{r}=U^{t,x}_{r}-^{k}U^{t,x}_{s}1_{\{|e|\geq\frac{1}{k}\}}$ and $^{k}\Delta\ell_{r}=\left(\ell(r,X^{t,x}_{r}
)-\ell(r,^{k}X^{t,x}_{r})\right)$.\\

Next let us set for $r\leq T$,
$$^{k}\Delta f(r)=(f(r,X^{t,x}_{r},\vec{Y}^{t,x}_{r},Z^{t,x}_{r},U^{t,x}_{r})-f_{k}(r,^{k}X^{t,x}_{r},^{k}Y^{t,x}_{r},^{k}Z^{t,x}_{r},^{k}U^{t,x}_{r}))=A(r)+B(r)+C(r)+D(r);$$
where  for any $i=1,\ldots,m$, 
\begin{eqnarray*}
A(r) & = & \left(h^{(i)}\left(r,X^{t,x}_{r},\vec{Y}^{t,x}_{r},Z^{i;t,x}_{r},\displaystyle\int_{E}\gamma^{i}(r,X^{t,x}_{r},e)U^{i;t,x}_{r}(e)\lambda(de)\right)
\right.\\
&{}{}&\left.
-h^{(i)}\left(r,^{k}X^{t,x}_{r},\vec{Y}^{t,x}_{r},Z^{i;t,x}_{r},\displaystyle\int_{E}\gamma^{i}(r,X^{t,x}_{r},e)U^{i;t,x}_{r}(e)\lambda(de)\right)\right)_{i=1,m};\\ 
B(r) & = & \left(h^{(i)}\left(r,^{k}X^{t,x}_{r},\vec{Y}^{t,x}_{r},Z^{i;t,x}_{r},\displaystyle\int_{E}\gamma^{i}(r,X^{t,x}_{r},e)U^{i;t,x}_{r}(e)\lambda(de)\right)
\right.\\
&{}{}&\left.
-h^{(i)}\left(r,^{k}X^{t,x}_{r},^{k}Y^{t,x}_{r},Z^{i;t,x}_{r},\displaystyle\int_{E}\gamma^{i}(r,X^{t,x}_{r},e)U^{i;t,x}_{r}(e)\lambda(de)\right)\right)_{i=1,m};\\ 
C(r) & = & \left(h^{(i)}\left(r,^{k}X^{t,x}_{r},^{k}Y^{t,x}_{r},Z^{i;t,x}_{r},\displaystyle\int_{E}\gamma^{i}(r,X^{t,x}_{r},e)U^{i;t,x}_{r}(e)\lambda(de)\right)
\right.\\
&{}{}&\left.
-h^{(i)}\left(r,^{k}X^{t,x}_{r},^{k}Y^{t,x}_{r},^{k}Z^{i;t,x}_{r},\displaystyle\int_{E}\gamma^{i}(r,X^{t,x}_{r},e)U^{i;t,x}_{r}(e)\lambda(de)\right)\right)_{i=1,m};\\ 
D(r) & = & \left( h^{(i)}\left(r,^{k}X^{t,x}_{r},^{k}Y^{t,x}_{r},^{k}Z^{i;t,x}_{r},\displaystyle\int_{E}\gamma^{i}(r,X^{t,x}_{r},e)U^{i;t,x}_{r}(e)\lambda(de)\right)
\right.\\
&{}{}&\left.
-h^{(i)}\left(r,^{k}X^{t,x}_{r},^{k}Y^{t,x}_{r},^{k}Z^{i;t,x}_{r},\displaystyle\int_{E}\gamma^{i}(r,^{k}X^{t,x}_{r},e)^{k}U^{i;t,x}_{r}(e)\lambda_{k}(de)\right)\right)_{i=1,m}.
\end{eqnarray*}
By (\ref{4.50}) and the of $g\in\mathcal{U}$ and $\ell\in\mathcal{U}$ we have,
\begin{equation}\label{4.54}
\mathbb{E}\left[\left|g(X^{t,x}_{T})-g(^{k}X^{t,x}_{T})\right|^{2}\right]\substack{\displaystyle\longrightarrow 0\\ k\rightarrow+\infty}
\end{equation}
and
\begin{equation}\label{4.55}
\mathbb{E}\left[\sup_{s\leq T}\left|\ell(X^{t,x}_{s})-\ell(^{k}X^{t,x}_{s})\right|^{2}\right]\substack{\displaystyle\longrightarrow 0\\ k\rightarrow+\infty}.
\end{equation}
Now we will interest to $\mathbb{E}\left[\displaystyle\int^{T}_{s}
\left(\vec{Y}^{t,x}_{r}-^{k}Y^{t,x}_{r}\right)\times ^{k}\Delta f(r)\,dr\right]$ for found (\ref{4.51}).\\
By (\ref{2.10}) and (\ref{2.11}), we have: $\forall r\in[0,T]$
\begin{eqnarray}\label{4.56} 
\left|A(r)\right| &\leq & C\left|X^{t,x}_{r}-^{k}X^{t,x}_{r}\right|(1+\left|X^{t,x}_{r}\right|^{p}+\left|^{k}X^{t,x}_{r}\right|^{p});\\
\left|B(r)\right| &\leq & C\left|\vec{Y}^{t,x}_{r}-^{k}Y^{t,x}_{r}\right|~\text{and } \left|C(r)\right|\leq \left|Z^{t,x}_{r}-^{k}Z^{t,x}_{r}\right|;\nonumber
\end{eqnarray}
where $C$ is a constant. Finally let us deal with $D(r)$ which is more involved. First note that $D(r)=(D_{i}(r))_{i=1,m}$ where 
\begin{eqnarray*}
D_{i}(r)& = & h^{(i)}\left(r,^{k}X^{t,x}_{r},^{k}Y^{t,x}_{r},^{k}Z^{i;t,x}_{r},\displaystyle\int_{E}\gamma^{i}(r,X^{t,x}_{r},e)U^{i;t,x}_{r}(e)\lambda(de)\right)\\
&{}{}&-h^{(i)}\left(r,^{k}X^{t,x}_{r},^{k}Y^{t,x}_{r},^{k}Z^{i;t,x}_{r},\displaystyle\int_{E}\gamma^{i}(r,^{k}X^{t,x}_{r},e)^{k}U^{i;t,x}_{r}(e)\lambda_{k}(de)\right).
\end{eqnarray*}
But as $h^{(i)}$ is Lipschitz w.r.t to the last component $q$ then,
\begin{eqnarray}\label{4.57}
\left|D(r)\right|^{2} & \leq & C\left\lbrace \displaystyle\int_{E}\left|\gamma^{i}(r,X^{t,x}_{r},e)U^{i;t,x}_{r}(e)-\gamma^{i}(r,^{k}X^{t,x}_{r},e)^{k}U^{i;t,x}_{r}(e)1_{\{|e|\geq\frac{1}{k}\}}\right|^{2}\lambda(de)\right\rbrace\nonumber\\
{}&\leq & C\left\lbrace \left\lbrace \displaystyle\int_{E}\left|\gamma^{i}(r,X^{t,x}_{r},e)-\gamma^{i}(r,^{k}X^{t,x}_{r},e)\right|\left|U^{i;t,x}_{r}(e)\right|\,\lambda(de)\right\rbrace^{2}
\right.\nonumber\\
&{}{}&\left.
+\left\lbrace \displaystyle\int_{E}\left|\gamma^{i}(r,X^{t,x}_{r},e)\right|\left| U^{i;t,x}_{r}(e)-^{k}U^{i;t,x}_{r}(e)1_{\{|e|\geq\frac{1}{k}\}}\right|\lambda(de)\right\rbrace^{2}\right\rbrace\nonumber\\
{}&\leq & C\left\lbrace \left|X^{t,x}_{r}-^{k}X^{t,x}_{r}\right|(1+\left|X^{t,x}_{r}\right|^{p}+\left|^{k}X^{t,x}_{r}\right|^{p})\left|\displaystyle\int_{E}U^{i;t,x}_{r}(e)\right|\lambda(de)\right\rbrace^{2}\nonumber\\
&{}{}&+C\displaystyle\int_{E}(1\wedge|e|)\left| U^{i;t,x}_{r}(e)-^{k}U^{i;t,x}_{r}(e)1_{\{|e|\geq\frac{1}{k}\}}\right|^{2}\lambda(de),
\end{eqnarray}
and (\ref{4.53}) become by using the majorations obtain in (\ref{4.56}) and in (\ref{4.57});
\begin{eqnarray}\label{4.58}
& {}{} &\mathbb{E}\left[\left|\vec{Y}^{t,x}_{s}-^{k}Y^{t,x}_{s}\right|^{2}+\left|^{k}\Delta K_{T}\right|^{2}+\displaystyle\int^{T}_{0}\left\lbrace\left|Z^{t,x}_{s}-^{k}Z^{t,x}_{s}\right|^{2}+\displaystyle\int_{E}\left|U^{t,x}_{s}-^{k}U^{t,x}_{s}1_{\{|e|\geq\frac{1}{k}\}}\right|^{2}\,\lambda(de)\right\rbrace\,ds\right]\nonumber\\
& {}{} &\leq\mathbb{E}\left[\left|g(X^{t,x}_{T})-g(^{k}X^{t,x}_{T})\right|^{2}\right]+\mathbb{E}\left[\sup_{s\leq T}\left|\ell(X^{t,x}_{s})-\ell(^{k}X^{t,x}_{s})\right|^{2}\right] +C\mathbb{E}\left[\displaystyle\int^{T}_{s}
\left|\vec{Y}^{t,x}_{s}-^{k}Y^{t,x}_{s}\right|^{2}\right]\nonumber\\
& {}{} &+C\mathbb{E}\left[\displaystyle\int^{T}_{0}\left|X^{t,x}_{r}-^{k}X^{t,x}_{r}\right|^{2}(1+\left|X^{t,x}_{r}\right|^{p}+\left|^{k}X^{t,x}_{r}\right|^{p})^{2}\,dr\right]\nonumber\\
& {}{} &+C\mathbb{E}\left[\displaystyle\int^{T}_{0}\,dr\left\lbrace \left|X^{t,x}_{r}-^{k}X^{t,x}_{r}\right|(1+\left|X^{t,x}_{r}\right|^{p}+\left|^{k}X^{t,x}_{r}\right|^{p})\displaystyle\int_{E}U^{i;t,x}_{r}(e)\lambda(de)\right\rbrace^{2}\right].\nonumber\\
\end{eqnarray}
The two first terms converge to $0$ by (\ref{4.54}) and (\ref{4.55}).\\
For the fourth term we have:
\begin{eqnarray*}
&{}{}&\mathbb{E}\left[\displaystyle\int^{T}_{0}\left|X^{t,x}_{r}-^{k}X^{t,x}_{r}\right|^{2}(1+\left|X^{t,x}_{r}\right|^{p}+\left|^{k}X^{t,x}_{r}\right|^{p})^{2}\,dr\right]\\
{}&\leq & \mathbb{E}\left[\sup_{r\leq T}\left|X^{t,x}_{r}-^{k}X^{t,x}_{r}\right|^{2}\displaystyle\int^{T}_{0}(1+\left|X^{t,x}_{r}\right|^{p}+\left|^{k}X^{t,x}_{r}\right|^{p})^{2}\,dr\right]\nonumber\\
{}&\leq & \left\lbrace\mathbb{E}\left[\sup_{r\leq T}\left|X^{t,x}_{r}-^{k}X^{t,x}_{r}\right|^{2}\right]\right\rbrace^{\frac{1}{2}}\left\lbrace\mathbb{E}\left[\left(\displaystyle\int^{T}_{0}(1+\left|X^{t,x}_{r}\right|^{p}+\left|^{k}X^{t,x}_{r}\right|^{p})^{2}\left|X^{t,x}_{r}-^{k}X^{t,x}_{r}\right|\right)^{2}\,dr\right]\right\rbrace^{\frac{1}{2}}.
\end{eqnarray*}
The first factor in the right-hand side of this inequality goes to $0$ when $k\rightarrow \infty$ due to (\ref{4.50}) and the second factor is uniformly bounded by the uniform estimates (\ref{3.16}) of $X^{t,x}$ and $^{k}X^{t,x}$.\\
Note also the last term converge to $0$ when $k\rightarrow \infty$, it is a consequence of (\ref{4.50}), the fact that $^{k}X^{t,x}$ verifies estimates (\ref{3.16}) uniformly, the Cauchy-Schwartz inequality (used twice) and finally (\ref{4.22}) of lemma $4.1$. Then by Gronwall's lemma we deduce first that for any $s\leq T$,
\begin{equation}\label{4.59}
\mathbb{E}\left[\left|\vec{Y}^{t,x}_{s}-^{k}Y^{t,x}_{s}\right|^{2}\right]\substack{\displaystyle\longrightarrow 0\\ k\rightarrow+\infty}
\end{equation}
and in taking $s=t$ we obtain $u^{k}(t,x)\substack{\displaystyle\longrightarrow u(t,x)\\ k\rightarrow+\infty}$. As $(t,x)\in[0,T]\times \mathbb{R}^{k}$ is arbitrary then u$^{k}\substack{\displaystyle\longrightarrow u\\ k\rightarrow+\infty}$ pointwisely.\\
Next going  back to (\ref{4.58}) take the limit w.r.t $k$ and using the uniform polynomial growth of $u^{k}$ and the Lebesgue dominated convergence theorem as well, to obtain:
\begin{equation}\label{4.60}
\mathbb{E}\left[\displaystyle\int^{T}_{t}\displaystyle\int_{E}\left|U^{t,x}_{s}-^{k}U^{t,x}_{s}1_{\{|e|\geq\frac{1}{k}\}}\right|^{2}\,\lambda(de)\,ds\right]\substack{\displaystyle\longrightarrow 0\\ k\rightarrow+\infty}.
\end{equation}
\textbf{Step 3: Conclusion}\\
First note that by (\ref{4.48}) and the pointwise convergence of $(u^{k})_{k}$ to $u$, if $(x_{k})_{k}$ is a sequence of $\mathbb{R}^{k}$ which converge to $x$ then $((u^{k}(t,x_{k}))_{k})$ converge to $u(t,x)$.\\
Now let us consider a subsequence which we still denote by $\{k\}$ such that $\sup_{s\leq T}\left|X^{t,x}_{s}-^{k}X^{t,x}_{s}\right|^{2}\substack{\displaystyle\longrightarrow 0\\ k\rightarrow+\infty}$, $\mathbb{P}$-a.s. (and then $\left|X^{t,x}_{s-}-^{k}X^{t,x}_{s-}\right|\substack{\displaystyle\longrightarrow 0\\ k\rightarrow+\infty}$ since $\left|X^{t,x}_{s-}-^{k}X^{t,x}_{s-}\right|\leq \sup_{s\leq T}\left|X^{t,x}_{s}-^{k}X^{t,x}_{s}\right|^{2}$). By (\ref{4.50}), this subsequence exists. As the mapping $x\mapsto \beta(t,x,e)$ is Lipschitz then the sequence
\begin{eqnarray}\label{4.61}
&{}{}&\left(^{k}U^{t,x}_{s}(e)1_{\{|e|\geq\frac{1}{k}\}}\right)_{k}=\left((u^{k}_{i}(s,^{k}X^{t,x}_{s-}+\beta(s,^{k}X^{t,x}_{s-},e))-u^{k}_{i}(s,^{k}X^{t,x}_{s-}))1_{\{|e|\geq\frac{1}{k}\}}\right)_{k\geq 1}\substack{\displaystyle\longrightarrow {}\\ k\rightarrow+\infty}\nonumber\\
&{}{}& (u_{i}(s,X^{t,x}_{s-}+\beta(s,X^{t,x}_{s-},e))-u_{i}(s,X^{t,x}_{s-})),\quad d\mathbb{P}\otimes ds\otimes d\lambda-a.e.\quad \text{on }\Omega\times[t,T]\times E\quad
\end{eqnarray}
for any $i=1,\ldots,m$. Finally from (\ref{4.60}) we deduce that
\begin{equation}\label{4.62}
U^{t,x}_{s}(e)=(u_{i}(s,X^{t,x}_{s-}+\beta(s,X^{t,x}_{s-},e))-u_{i}(s,X^{t,x}_{s-})),\quad\text{on }\Omega\times[t,T]\times E
\end{equation}
which is the desired result.

\section{The main result}
First we give the definition of viscosity solution of IPDEs as given in \cite{hama} and \cite{hamaMor}. Our main result deal with this definition.
\begin{definition}
We say that a family of deterministics functions $u=(u^{i})_{i=1,m}$ which belongs to $\mathcal{U}\quad\forall i\in\{1,\ldots,m\}$ is a viscosity sub-solution (resp. super-solution) of the IPDE (\ref{eq1}) if:\\
$(i)\quad \forall x\in\mathbb{R}^{k}$, $u^{i}(x,T)\leq g^{i}(x)$ (resp. $u^{i}(x,T)\geq g^{i}(x)$);\\
$(ii)\quad\text{For any } (t,x)\in[0,T]\times\mathbb{R}^{k}$ and any function $\phi$ of class $C^{1,2}([0,T]\times\mathbb{R}^{k})$ such that $(t,x)$ is a global maximum  point of $u^{i}-\phi$ (resp. global minimum  point of $u^{i}-\phi$) and $(u^{i}-\phi)(t,x)=0$ one has
\begin{equation}\label{5.63}
\min\left\lbrace u^{i}(t,x)-\ell(t,x);-\partial_{t}\phi(t,x)-\mathcal{L}^{X}\phi(t,x)-h^{i}(t,x,(u^{j}(t,x))_{j=1,m},\sigma^{\top}(t,x))D_{x}\phi(t,x),B_{i}u^{i}(t,x))\right\rbrace\leq 0 
\end{equation}
$\left(resp.
\right.$
\begin{equation}\label{5.64}
\left.
\min\left\lbrace u^{i}(t,x)-\ell(t,x);-\partial_{t}\phi(t,x)-\mathcal{L}^{X}\phi(t,x)-h^{i}(t,x,(u^{j}(t,x))_{j=1,m},\sigma^{\top}(t,x))D_{x}\phi(t,x),B_{i}u^{i}(t,x))\right\rbrace\geq 0\right).
\end{equation}
The family $u=(u^{i})_{i=1,m}$ is a viscosity solution of (\ref{eq1}) if it is both a viscosity sub-solution and viscosity super-solution.\\
Note that $\mathcal{L}^{X}\phi(t,x)=b(t,x)^{\top}\mathrm{D}_{x}\phi(t,x)+\frac{1}{2}\mathrm{Tr}(\sigma\sigma^{\top}(t,x)\mathrm{D}^{2}_{xx}\phi(t,x))+\mathrm{K}\phi(t,x)$;\\
where $\mathrm{K}\phi(t,x)=\displaystyle\int_{\mathrm{E}}(\phi(t,x+\beta(t,x,e))-\phi(t,x)-\beta(t,x,e)^{\top}\mathrm{D}_{x}\phi(t,x))\lambda(de).$
\end{definition}
\begin{theorem}
Under assumptions (\textbf{H1}), (\textbf{H2}) and (\textbf{H3}), the IPDE (\ref{eq1}) has unique solution which is the $m$-tuple of functions $(u^{i})_{i=1,m}$ defined in proposition $3.3$ by (\ref{3.18}).  
\end{theorem}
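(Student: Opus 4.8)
The statement claims existence and uniqueness of a continuous viscosity solution of (\ref{eq1}) in the class $\mathcal{U}$, the novelty being that neither monotonicity of $h^{(i)}$ in its last argument nor $\gamma^i\geq 0$ is assumed. For \textbf{existence} the plan is to recover $u=(u^i)_{i=1,m}$ as a limit of the solutions of the truncated problems built in Section~4. For each $k\geq 1$ the truncated L\'evy measure $\lambda_k$ is finite, so the Hamad\`ene--Morlais result \cite{hamaMor} applies to the $k$-th system even without monotonicity: the function $u^k$ of (\ref{4.47}) is a continuous viscosity solution (in the sense of Definition~5.1, with $\mathcal{L}^X$, $\mathrm{K}_i$, $\mathrm{B}_i$ replaced by their $\lambda_k$-versions) of the corresponding truncated IPDE, it lies in $\Pi^c_g$, and it satisfies the polynomial-Lipschitz bound (\ref{4.48}) with constants independent of $k$. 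Together with the pointwise limit $u^k\to u$ coming from (\ref{4.59}), the bound (\ref{4.48}) forces $u^k\to u$ locally uniformly on $[0,T]\times\mathbb{R}^k$, and $u\in\mathcal{U}$ by Proposition~4.2.

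It then remains to apply the standard stability of viscosity solutions. Fix $i$, a point $(t_0,x_0)$, and $\phi\in C^{1,2}$ such that $u^i-\phi$ has a global maximum equal to $0$ at $(t_0,x_0)$; after a quadratic perturbation of $\phi$ one may assume the maximum is strict, produce near-maximizers $(t_k,x_k)\to(t_0,x_0)$ of $u^{k,i}-\phi$, and write the truncated version of the sub-solution inequality (\ref{5.63}) at $(t_k,x_k)$. Three passages to the limit then have to be justified: $\mathrm{K}_k\phi(t_k,x_k)\to\mathrm{K}\phi(t_0,x_0)$, which follows from $\phi\in C^{1,2}$, $|\beta|\leq C(1\wedge|e|)$ and $\int_E(1\wedge|e|^2)\lambda(de)<\infty$ by dominated convergence; $\mathrm{B}_i^k u^{k,i}(t_k,x_k)\to\mathrm{B}_i u^i(t_0,x_0)$, which uses the explicit form (\ref{4.49}) of ${}^kU^{t,x}$, the convergence (\ref{4.60})--(\ref{4.62}), the uniform bound (\ref{4.48}), assumption (\ref{2.12}), Corollary~4.3, and again dominated convergence; and the joint continuity of $b,\sigma,h^{(i)},\gamma^i,\ell$ in $(t,x)$. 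Passing to the limit yields (\ref{5.63}) for $u$; the super-solution inequality (\ref{5.64}) is obtained symmetrically, and $u^i(T,\cdot)=g^i$ is inherited from (\ref{4.47}). Hence $u$ is a continuous viscosity solution of (\ref{eq1}).

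For \textbf{uniqueness} I would invoke the comparison principle for (\ref{eq1}) in the growth class of Remark~2, which contains $\mathcal{U}$: any two viscosity solutions in that class coincide, so the existence just proved gives \emph{the} solution. If one prefers a self-contained argument, the comparison is obtained by the doubling-of-variables technique of \cite{bar,har,hamaMor} with the auxiliary weight $\varphi(x)=[\log(|x|^2+1)]^2$ to absorb the polynomial growth; it does not use monotonicity, because $h^{(i)}$ is globally Lipschitz in $(y,z,q)$ and the non-local term $\mathrm{B}_i$ is evaluated at the solutions themselves, which belong to $\mathcal{U}$, so that $\mathrm{B}_iv-\mathrm{B}_iw$ is Lipschitz-controlled through (\ref{2.12}) and the integrability of $(1\wedge|e|^2)$ — this also explains why the estimates survive $\lambda(E)=\infty$. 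Consequently the $m$-tuple $(u^i)_{i=1,m}$, represented by (\ref{3.18}), is the unique solution of (\ref{eq1}).

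The \textbf{main obstacle} is the second of the three convergences above. Since $\lambda(E)=\infty$, the process $U^{t,x}$ is only square-integrable and not $\lambda$-integrable, so $\mathrm{B}_i^k u^{k,i}\to\mathrm{B}_i u^i$ cannot be obtained by a crude bound: this is precisely where the $L^p$-estimate of $U^{t,x}$ in Lemma~4.1, the $\mathcal{U}$-regularity of $u^i$ in Proposition~4.2, and the representation $U^{i;t,x}_s(e)=u^i(s,X^{t,x}_{s-}+\beta(s,X^{t,x}_{s-},e))-u^i(s,X^{t,x}_{s-})$ established in Section~4 enter in an essential way. A secondary difficulty is verifying that the comparison argument of \cite{bar} still goes through without monotonicity, which again rests on $\mathrm{B}_i$ being applied only to functions of class $\mathcal{U}$.
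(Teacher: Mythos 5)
Your \emph{existence} argument takes a genuinely different route from the paper. The paper does not pass to the limit in the truncated viscosity solutions: it freezes the non-local term, i.e.\ it considers the auxiliary RBSDE (\ref{5.65}) whose generator is $h^{(i)}\bigl(r,X^{t,x}_r,y,z,\mathrm{B}_iu^i(r,X^{t,x}_r)\bigr)$ with $\mathrm{B}_iu^i$ a \emph{known} continuous function of $(t,x)$ (well defined by Corollary 4.3 and the representation of $U^{i;t,x}$ from Proposition 4.4). Since this generator no longer depends on $\zeta$, the monotonicity hypothesis of Proposition 3.3 is vacuous, so the classical result applies and yields a viscosity solution $\underline{u}^i$ of the frozen system (\ref{5.66}); uniqueness of the RBSDE then identifies $\underline{u}^i$ with $u^i$. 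Your stability-of-viscosity-solutions argument (locally uniform convergence of $u^k$ from (\ref{4.48}) and (\ref{4.59}), plus dominated convergence for $\mathrm{K}_k\phi$ and $\mathrm{B}_i^ku^{k,i}$) is a plausible alternative, and it correctly isolates where Lemma 4.1, Proposition 4.2 and the representation of $U$ are needed; it does require the standard care with the global-maximum test points and the splitting of the non-local operator, but nothing there looks fatal.

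The \emph{uniqueness} part, however, has a genuine gap. You propose a comparison principle by doubling of variables ``as in \cite{bar,har,hamaMor}'' and assert that it does not use monotonicity because $h^{(i)}$ is Lipschitz in $q$ and $\mathrm{B}_iv-\mathrm{B}_iw$ is ``Lipschitz-controlled''. In the doubling argument one assumes $M=\sup\,(u-v-\text{penalty})>0$ and must control, at the (approximate) maximum point, the term $h^{(i)}(\ldots,\mathrm{B}_iu^i)-h^{(i)}(\ldots,\mathrm{B}_iv^i)$, i.e.\ $\bigl|\int_E\gamma^i(t,x,e)\bigl[(u^i-v^i)(t,x+\beta(t,x,e))-(u^i-v^i)(t,x)\bigr]\lambda(de)\bigr|$. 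When $\gamma^i\geq 0$ and $h^{(i)}$ is monotone in $q$, the maximum-point structure gives this term a favourable sign; without those assumptions the integrand changes sign and is only bounded by the values of $u-v$ at points $x+\beta(t,x,e)$ far from the maximizer, so it cannot be absorbed --- the Lipschitz bound only controls it by a sup-norm of $u-v$, which is precisely the quantity one is trying to show is zero. This is exactly the obstruction that motivates the whole paper, and it is why neither \cite{hamaMor} nor \cite{hama} proves uniqueness by comparison: they (and the present paper, Step 2 of the proof of Theorem 5.2) argue probabilistically, showing that any viscosity solution $\overline{u}\in\mathcal{U}$, once its non-local term is frozen into the generator as in (\ref{5.68}), produces an RBSDE whose $U$-component is again represented by $\overline{u}(\cdot+\beta)-\overline{u}(\cdot)$ (established through the double approximation (\ref{5.72})--(\ref{5.73})), hence solves the original RBSDE (\ref{3.17}) and must coincide with $(Y^{i;t,x})$, forcing $\overline{u}=u$. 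Likewise, the comparison result of Remark 2 is inherited from \cite{har} and is only available under monotonicity and $\gamma^i\geq 0$ and for the Barles et al.\ notion of solution, not for Definition 5.1. As written, your uniqueness step does not go through; you would need to replace it by the probabilistic identification argument or supply a new analytic comparison proof that genuinely circumvents the sign problem.
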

\begin{proof}
\emph{\underline{Step $1$}:} \emph{Existence}\\
Assume that assumptions (\textbf{H1}), (\textbf{H2}) and (\textbf{H3}) are fulfilled, then the following multi-dimensional RBSDEs with jumps
\begin{equation}\label{5.65}
\left
\{\begin{array}{ll}
(i)~\underline{\vec{Y}}^{t,x}:=(\underline{Y}^{i;t,x})_{i=1,m}\in\mathcal{S}^{2}(\mathbb{R}^{m}),~\underline{Z}^{t,x}:=(\underline{Z}^{i;t,x})_{i=1,m}\in\mathbb{H}^{2}(\mathbb{R}^{m\times d}),~
 \underline{K}^{t,x}:=(\underline{K}^{i;t,x})_{i=1,m}\in\mathcal{A}^{2}_{c},\\\underline{U}^{t,x}:=(\underline{U}^{i;t,x})_{i=1,m}\in\mathbb{H}^{2}(\mathbb{L}^{2}_{m}(\lambda));\\
(ii)~\underline{Y}^{i;t,x}_{s}= g^{i}(X^{t,x}_{T})+
\underline{K}^{i;t,x}_{T}-\underline{K}^{i;t,x}_{s}-\displaystyle\int^{T}_{s}\underline{Z}^{i;t,x}\mathrm{d}
\mathrm{B}_{r}-\displaystyle\int^{T}_{s}
\displaystyle\int_{\mathrm{E}}\underline{U}^{i;t,x}_{r}(e)\tilde{\mu}(\mathrm{d}r,\mathrm{d}e).\\
\quad\quad\quad+\displaystyle\int^{T}_{s}h^{(i)}(r,X^{t,x}_{r},\underline{Y}^{i;t,x}_{r},\underline{Z}^{i;t,x}_{r},\displaystyle\int_{\mathrm{E}}\gamma^{i}(t,X^{t,x}_{r},e)\{(u^{i}(t,X^{t,x}_{r-}+\beta(t,X^{t,x}_{r-},e))-u^{i}(t,X^{t,x}_{r-}))\}\,\lambda(de))dr\\
(iii)~\underline{Y}^{i;t,x}_{s}\geq \ell(s,X^{t,x}_{s})~\textrm{and}~ \displaystyle\int^{T}_{0}(\underline{Y}^{i;t,x}_{s}- \ell(s,X^{t,x}_{s}))\mathrm{d}\underline{K}^{i;t,x}_{s}=0; 
\end{array}
\right.
\end{equation}
has unique solution  $(\underline{Y},\underline{Z},\underline{K},\underline{U})$.
Next as for any $i=1,\ldots,m$, $u^{i}$ belongs to $\mathcal{U}$, then by proposition $3.3$ the (\ref{3.18}), there exists a family of deterministics continuous functions of polynomial growth $(\underline{u}^{i})_{i=1,m}$ that fact for any $(t,x)\in[0,T]\times\mathbb{R}^{k}$,
$$\forall s\in[t,T],\qquad \underline{Y}^{i;t,x}_{s}=\underline{u}^{i}(s,X^{t,x}_{s}).$$
Such that by the same proposition, the family $(\underline{u}^{i})_{i=1,m}$ is a viscosity solution of the following system:
\begin{equation}\label{5.66}
\left
\{\begin{array}{ll}
\min\Big\{\underline{u}^{i}(t,x)-\ell(t,x);-\partial_{t}\underline{u}^{i}(t,x)-b(t,x)^{\top}\mathrm{D}_{x}\underline{u}^{i}(t,x)-\frac{1}{2}\mathrm{Tr}(\sigma\sigma^{\top}(t,x)\mathrm{D}^{2}_{xx}\underline{u}^{i}(t,x))\\
\quad\quad-\mathrm{K}_{i}\underline{u}^{i}(t,x)-\mathit{h}^{(i)}(t,x,(\underline{u}^{j}(t,x))_{j=1,m},(\sigma^{\top}\mathrm{D}_{x}\underline{u}^{i})(t,x),\mathrm{B}_{i}u^{i}(t,x))\Big\}=0,\quad (t,x)\in\left[ 0,T\right] \times\mathbb{R}^{k};\\
u^{i}(T,x)=g^{i}(x).
\end{array}
\right.
\end{equation}
Now we have the family $(\underline{u}^{i})_{i=1,m}$ is a viscosity solution, our main objective is to found relation between $(\underline{u}^{i})_{i=1,m}$ and $(u^{i})_{i=1,m}$ which is defined in (\ref{3.18}).\\
For this, let us consider the system of RBSDE with jumps
\begin{equation}\label{5.67}
\left
\{\begin{array}{ll}
(i)~\vec{Y}^{t,x}:=(Y^{i;t,x})_{i=1,m}\in\mathcal{S}^{2}(\mathbb{R}^{m}),~Z^{t,x}:=(Z^{i;t,x})_{i=1,m}\in\mathbb{H}^{2}(\mathbb{R}^{m\times d}),~K^{t,x}:=(K^{i;t,x})_{i=1,m}\in\mathcal{A}^{2}_{c},\\U^{t,x}:=(U^{i;t,x})_{i=1,m}\in\mathbb{H}^{2}(\mathbb{L}^{2}_{m}(\lambda));\\
(ii)~Y^{i;t,x}_{s}= g^{i}(X^{t,x}_{T})+
K^{i;t,x}_{T}-K^{i;t,x}_{s}-\displaystyle\int^{T}_{s}Z^{i;t,x}\mathrm{d}
\mathrm{B}_{r}-\displaystyle\int^{T}_{s}
\displaystyle\int_{\mathrm{E}}U^{i;t,x}_{r}(e)\tilde{\mu}(\mathrm{d}r,\mathrm{d}e).\\
\quad\quad\quad+\displaystyle\int^{T}_{s}h^{(i)}(r,X^{t,x}_{r},Y^{i;t,x}_{r},\underline{Z}^{i;t,x}_{r},\displaystyle\int_{\mathrm{E}}\gamma^{i}(t,X^{t,x}_{r},e)U^{i;t,x}_{r}(e)\,\lambda(de))dr;\\
(iii)~Y^{i;t,x}_{s}\geq \ell(s,X^{t,x}_{s})~\textrm{and}~ \displaystyle\int^{T}_{0}(Y^{i;t,x}_{s}- \ell(s,X^{t,x}_{s}))\mathrm{d}K^{i;t,x}_{s}=0. 
\end{array}
\right.
\end{equation}
By uniqueness of the solution of the RBSDEs with jumps (\ref{5.64}), that for any $s\in[t,T]$ and $\forall i\in\{1\ldots,m\}$, $\underline{Y}^{i;t,x}_{s}=Y^{i;t,x}_{s}$.\\
Therefore  $\underline{u}^{i}=u^{i}$, such that by (\ref{4.60}) we obtain $U^{t,x}_{s}(e)=(u_{i}(s,X^{t,x}_{s-}+\beta(s,X^{t,x}_{s-},e))-u_{i}(s,X^{t,x}_{s-})),\quad\text{on }\Omega\times[t,T]\times E$, which give the viscosity solution in the sense of definition $5.1$ (see \cite{hama}) by pluging (\ref{4.61}) in $h^{(i)}$ of (\ref{5.66}).

\emph{\underline{Step $2$}:} \emph{Uniqueness}\\

For uniqueness, let $(\overline{u}^{i})_{i=1,m}$ be another family of $\mathcal{U}$ which is solution viscosity of the system (\ref{eq1}) in the sense of definition $5.1$ and we consider RBSDE with jumps defined with $\overline{u}^{i}$.   
\begin{equation}\label{5.68}
\left
\{\begin{array}{ll}
(i)~\vec{\overline{Y}}^{t,x}:=(\overline{Y}^{i;t,x})_{i=1,m}\in\mathcal{S}^{2}(\mathbb{R}^{m}),~\overline{Z}^{t,x}:=(\overline{Z}^{i;t,x})_{i=1,m}\in\mathbb{H}^{2}(\mathbb{R}^{m\times d}),~\overline{K}^{t,x}:=(\overline{K}^{i;t,x})_{i=1,m}\in\mathcal{A}^{2}_{c},\\\overline{U}^{t,x}:=(\overline{U}^{i;t,x})_{i=1,m}\in\mathbb{H}^{2}(\mathbb{L}^{2}_{m}(\lambda));\\
(ii)~\overline{Y}^{i;t,x}_{s}= g^{i}(X^{t,x}_{T})+
\overline{K}^{i;t,x}_{T}-\overline{K}^{i;t,x}_{s}-\displaystyle\int^{T}_{s}\overline{Z}^{i;t,x}\mathrm{d}
\mathrm{B}_{r}-\displaystyle\int^{T}_{s}
\displaystyle\int_{\mathrm{E}}\overline{U}^{i;t,x}_{r}(e)\tilde{\mu}(\mathrm{d}r,\mathrm{d}e).\\
\quad\quad\quad+\displaystyle\int^{T}_{s}h^{(i)}(r,X^{t,x}_{r},\overline{Y}^{i;t,x}_{r},\overline{Z}^{i;t,x}_{r},\displaystyle\int_{\mathrm{E}}\gamma^{i}(t,X^{t,x}_{r},e)(\overline{u}_{i}(s,X^{t,x}_{s-}+\beta(s,X^{t,x}_{s-},e))-\overline{u}_{i}(s,X^{t,x}_{s-}))\,\lambda(de))dr;\\
(iii)~\overline{Y}^{i;t,x}_{s}\geq \ell(s,X^{t,x}_{s})~\textrm{and}~ \displaystyle\int^{T}_{0}(\overline{Y}^{i;t,x}_{s}- \ell(s,X^{t,x}_{s}))\mathrm{d}\overline{K}^{i;t,x}_{s}=0. 
\end{array}
\right.
\end{equation}
By Feynman Kac formula $\overline{u}^{i}(s,X^{t,x}_{s})=Y^{i;t,x}_{s}$ where $Y^{i;t,x}_{s}$ satisfies the RBSDE with jumps (\ref{eq2}) associated to IPDE (\ref{eq1}).\\
Since that the RBSDE with jumps (\ref{5.66}) has solution and it is unique by   
assumed that (\textbf{H1}), (\textbf{H2}) and (\textbf{H3}) are verified. By proposition $3.3$ the (\ref{3.18}), there exists a family of deterministic continuous functions of polynomial growth $(v^{i})_{i=1,m}$ that fact for any $(t,x)\in[0,T]\times\mathbb{R}^{k}$,
$$\forall s\in[t,T],\qquad \overline{Y}^{i;t,x}_{s}=v^{i}(s,X^{t,x}_{s}).$$
Such that by the same proposition, the family $(v^{i})_{i=1,m}$ is a viscosity solution of the following system: 
\begin{equation}\label{5.69}
\left
\{\begin{array}{ll}
\min\Big\{v^{i}(t,x)-\ell(t,x);-\partial_{t}v^{i}(t,x)-b(t,x)^{\top}\mathrm{D}_{x}v^{i}(t,x)-\frac{1}{2}\mathrm{Tr}(\sigma\sigma^{\top}(t,x)\mathrm{D}^{2}_{xx}v^{i}(t,x))\\
\quad\quad-\mathrm{K}_{i}v^{i}(t,x)-\mathit{h}^{(i)}(t,x,(v^{j}(t,x))_{j=1,m},(\sigma^{\top}\mathrm{D}_{x}v^{i})(t,x),\mathrm{B}_{i}\overline{u}^{i}(t,x))\Big\}=0,\quad (t,x)\in\left[ 0,T\right] \times\mathbb{R}^{k};\\
u^{i}(T,x)=g^{i}(x).
\end{array}
\right.
\end{equation}
By uniqueness of solution of (\ref{5.67}) $\overline{u}^{i}$ is viscosity solution of (\ref{5.68}); and by proposition $3.3$ $v^{i}=\overline{u}^{i}$ $\forall i\in\{1,\ldots,m\}$.\\
Now for completing our proof we show that on $\Omega\times[t,T]\times E$, $ds\otimes d\mathbb{P}\otimes d\lambda-\text{a.e.}\quad\forall i\in\{1,\ldots, m\}$;
\begin{eqnarray}\label{5.71}
\overline{U}^{i;t,x}_{s}(e) & = & (v^{i}(s,X^{t,x}_{s-}+\beta(s,X^{t,x}_{s-},e))-v^{i}(s,X^{t,x}_{s-}))\nonumber\\
{} & = & (\overline{u}_{i}(s,X^{t,x}_{s-}+\beta(s,X^{t,x}_{s-},e))-\overline{u}_{i}(s,X^{t,x}_{s-})).
\end{eqnarray} 
By Remark $3.4$ in \cite{hama}; let us considere $(x_{k})_{k\geq 1}$ a sequence of $\mathbb{R}^{k}$ which converges to $x\in\mathbb{R}^{k}$ and the two following RBSDE with jumps (adaptation is w.r.t. $\mathcal{F}^{k}$):
\begin{equation}\label{5.72}
\left
\{\begin{array}{ll}
(i)~\vec{\overline{Y}}^{k,t,x}:=(\overline{Y}^{i;k,t,x})_{i=1,m}\in\mathcal{S}^{2}(\mathbb{R}^{m}),~\overline{Z}^{k,t,x}:=(\overline{Z}^{i;k,t,x})_{i=1,m}\in\mathbb{H}^{2}(\mathbb{R}^{m\times d}),\\\overline{K}^{k,t,x}:=(\overline{K}^{i;k,t,x})_{i=1,m}\in\mathcal{A}^{2}_{c},~\overline{U}^{k,t,x}:=(\overline{U}^{i;k,t,x})_{i=1,m}\in\mathbb{H}^{2}(\mathbb{L}^{2}_{m}(\lambda));\\
(ii)~\overline{Y}^{i;k,t,x}_{s}= g^{i}(X^{k,t,x}_{T})+
\overline{K}^{i;k,t,x}_{T}-\overline{K}^{i;k,t,x}_{s}-\displaystyle\int^{T}_{s}\overline{Z}^{i;k,t,x}\mathrm{d}
\mathrm{B}_{r}-\displaystyle\int^{T}_{s}
\displaystyle\int_{\mathrm{E}}\overline{U}^{i;k,t,x}_{r}(e)\tilde{\mu}(\mathrm{d}r,\mathrm{d}e)\\
\quad\quad\quad+\displaystyle\int^{T}_{s}h^{(i)}\left(r,X^{k,t,x}_{r},\overline{Y}^{i;k,t,x}_{r},\overline{Z}^{i;k,t,x}_{r},
\right.\\

\left.
\qquad\qquad\qquad\qquad\qquad\displaystyle\int_{\mathrm{E}}\gamma^{i}(t,X^{k,t,x_{k}}_{r},e)(\overline{u}_{i}(s,X^{k,t,x_{k}}_{s-}+\beta(s,X^{k,t,x_{k}}_{s-},e))-\overline{u}_{i}(s,X^{k,t,x_{k}}_{s-}))\,\lambda(de)\right)dr;\\
(iii)~\overline{Y}^{i;k,t,x_{k}}_{s}\geq \ell(s,X^{k,t,x_{k}}_{s})~\textrm{and}~ \displaystyle\int^{T}_{0}(\overline{Y}^{i;k,t,x_{k}}_{s}- \ell(s,X^{k,t,x_{k}}_{s}))\mathrm{d}\overline{K}^{i;k,t,x_{k}}_{s}=0;
\end{array}
\right.
\end{equation}
and
\begin{equation}\label{5.73}
\left
\{\begin{array}{ll}
(i)~\vec{\overline{Y}}^{k,t,x_{k}}:=(\overline{Y}^{i;k,t,x_{k}})_{i=1,m}\in\mathcal{S}^{2}(\mathbb{R}^{m}),~\overline{Z}^{k,t,x_{k}}:=(\overline{Z}^{i;k,t,x_{k}})_{i=1,m}\in\mathbb{H}^{2}(\mathbb{R}^{m\times d}),\\\overline{K}^{k,t,x_{k}}:=(\overline{K}^{i;k,t,x_{k}})_{i=1,m}\in\mathcal{A}^{2}_{c},~\overline{U}^{k,t,x_{k}}:=(\overline{U}^{i;k,t,x_{k}})_{i=1,m}\in\mathbb{H}^{2}(\mathbb{L}^{2}_{m}(\lambda));\\
(ii)~\overline{Y}^{i;k,t,x_{k}}_{s}= g^{i}(X^{k,t,x_{k}}_{T})+
\overline{K}^{i;k,t,x_{k}}_{T}-\overline{K}^{i;k,t,x_{k}}_{s}-\displaystyle\int^{T}_{s}\overline{Z}^{i;k,t,x_{k}}\mathrm{d}
\mathrm{B}_{r}-\displaystyle\int^{T}_{s}
\displaystyle\int_{\mathrm{E}}\overline{U}^{i;k,t,x_{k}}_{r}(e)\tilde{\mu}(\mathrm{d}r,\mathrm{d}e)\\
\quad\quad\quad+\displaystyle\int^{T}_{s}h^{(i)}\left(r,X^{k,t,x_{k}}_{r},\overline{Y}^{i;k,t,x_{k}}_{r},\overline{Z}^{i;k,t,x_{k}}_{r},
\right.\\

\left.
\qquad\qquad\qquad\qquad\qquad\displaystyle\int_{\mathrm{E}}\gamma^{i}(t,X^{k,t,x_{k}}_{r},e)(\overline{u}_{i}(s,X^{k,t,x_{k}}_{s-}+\beta(s,X^{k,t,x_{k}}_{s-},e))-\overline{u}_{i}(s,X^{k,t,x_{k}}_{s-}))\,\lambda(de)\right)dr;\\
(iii)~\overline{Y}^{i;k,t,x_{k}}_{s}\geq \ell(s,X^{k,t,x_{k}}_{s})~\textrm{and}~ \displaystyle\int^{T}_{0}(\overline{Y}^{i;k,t,x_{k}}_{s}- \ell(s,X^{k,t,x_{k}}_{s}))\mathrm{d}\overline{K}^{i;k,t,x_{k}}_{s}=0. 
\end{array}
\right.
\end{equation}
By proof of step $2$ of proposition $4.4$, $(\overline{Y}^{i;k,t,x}, \overline{K}^{i;k,t,x},\overline{Z}^{i;k,t,x},\overline{U}^{i;k,t,x}1_{\{|e|\geq \frac{1}{k}\}})_{k}$ converge to $(\overline{Y}^{i;t,x}, \overline{K}^{i;t,x},\overline{Z}^{i;t,x},\\\overline{U}^{i;t,x})$ in $\mathcal{S}^{2}(\mathbb{R})\times\mathcal{A}^{2}_{c}\times\mathbb{H}^{2}(\mathbb{R}^{\kappa\times d})\times\mathbb{H}^{2}(\mathbb{L}^{2}(\lambda)) $.\\
Let $((v^{k}_{i=1,m}))_{k\geq 1}$ be the sequence of continuous deterministics functions such that for any $t\leq T$ and $s\in[t,T]$,\\
$$\overline{Y}^{i;k,t,x}_{s}=\overline{v}^{k}_{i}(s,^{k}X^{t,x}_{s})~\text{and } \overline{Y}^{i;k,t,x_{k}}_{s}=\overline{v}^{k}_{i}(s,^{k}X^{t,x_{k}}_{s})~~\forall i=1,\ldots,m.$$
Such that we have respectively by proof of proposition $4.4$ in step $1$ and step $2$:\\
$(i)~\overline{U}^{i;k,t,x}_{s}(e)=(v^{i}(s,^{k}X^{t,x}_{s-}+\beta(s,^{k}X^{t,x}_{s-},e))-v^{i}(s,^{k}X^{t,x}_{s-}))$, $ds\otimes d\mathbb{P}\otimes d\lambda_{k}$-a.e on $[t,T]\times\Omega\times E$;\\
$(ii)~\text{the sequence } ((v^{k}_{i=1,m}))_{k\geq 1}$ converge to $v^{i}(t,x)$
by using (\ref{4.59}).\\
So that $x_{k}\longrightarrow_{k} x$ we take the following estimation which is obtaining by Ito's formula and by the properties of $h^{(i)}$.
\begin{eqnarray}\label{5.74}
& {}{} &\mathbb{E}\left[\left|\vec{Y}^{k,t,x_{k}}_{s}-Y^{k,t,x}_{s}\right|^{2}+\left|K^{k,t,x_{k}}_{T}-K^{k,t,x}_{T}\right|^{2}+\displaystyle\int^{T}_{0}\left\lbrace\left|Z^{k,t,x_{k}}_{s}-Z^{k,t,x}_{s}\right|^{2}
\right.
\right.
\nonumber\\
&{}{}&\qquad\qquad\qquad\left.\left.+\displaystyle\int_{E}\left|U^{k,t,x_{k}}_{s}-U^{k,t,x}_{s}\right|^{2}\,\lambda_{k}(de)\right\rbrace\,ds\right]\nonumber\\
& {}{} &\leq\mathbb{E}\left[\left|g(^{k}X^{t,x_{k}}_{T})-g(^{k}X^{t,x}_{T})\right|^{2}\right]+\mathbb{E}\left[\sup_{s\leq T}\left|\ell(^{k}X^{t,x_{k}}_{s})-\ell(^{k}X^{t,x}_{s})\right|^{2}\right] +C\mathbb{E}\left[\displaystyle\int^{T}_{s}
\left|\vec{Y}^{k,t,x_{k}}_{r}-\vec{Y}^{k,t,x}_{r}\right|^{2}\,dr\right]\nonumber\\
& {}{} &+C\mathbb{E}\left[\displaystyle\int^{T}_{0}\left|^{k}X^{t,x_{k}}_{r}-^{k}X^{t,x}_{r}\right|^{2}(1+\left|^{k}X^{t,x_{k}}_{r}\right|^{p}+\left|^{k}X^{t,x}_{r}\right|^{p})^{2}\,dr\right]\nonumber\\
& {}{} &+C\displaystyle\sum_{i=1,m}\mathbb{E}\left[\displaystyle\int^{T}_{s}\left|\mathrm{B}_{i}\overline{u}^{i}(r,^{k}X^{t,x_{k}}_{r})-\mathrm{B}_{i}\overline{u}^{i}(r,^{k}X^{t,x}_{r})\right|^{2}\,dr\right].\nonumber\\
\end{eqnarray}
Next using (\ref{4.54}) and (\ref{4.55}), the continuty of the function $(t,x)\mapsto \mathrm{B}_{i}\overline{u}^{i}(t,x)$ and the fact of it belong to $\Pi_{g}$ and in the other hand the majoration of the fourth term of (\ref{4.58}); we can use Gronwall's lemma for $s=t$ $\forall i=1,\ldots,m$,\\
$$v^{k}_{i}(t,x_{k})\longrightarrow_{k}v^{k}_{i}(t,x).$$
Therefore by (i)-(ii) we have, for any $i=1,\ldots,m$,\\
\begin{equation}\label{5.75}
\overline{U}^{i;t,x}_{s}(e)=(v^{i}(s,X^{t,x}_{s-}+\beta(s,X^{t,x}_{s-},e))-v^{i}(s,X^{t,x}_{s-}))\quad ds\otimes d\mathbb{P}\otimes d\lambda-\text{a.e. in  }[t,T]\times\Omega\times E, \quad\forall i\in\{1,\ldots, m\}.
\end{equation}
By this result we can replace $(\overline{u}_{i}(s,X^{t,x}_{s-}+\beta(s,X^{t,x}_{s-},e))-\overline{u}_{i}(s,X^{t,x}_{s-}))$ by $\overline{U}^{i;t,x}_{s}(e)$ in (\ref{5.71}), we deduce that the quadriple $(\overline{Y}^{t,x}, \overline{K}^{t,x},\overline{Z}^{t,x},\overline{U}^{t,x})$ verifies: $\forall i\in\{1,\ldots,m\}$ 
\begin{equation}\label{5.76}
\left
\{\begin{array}{ll}
(i)~\vec{\overline{Y}}^{t,x}:=(\overline{Y}^{i;t,x})_{i=1,m}\in\mathcal{S}^{2}(\mathbb{R}^{m}),~\overline{Z}^{t,x}:=(\overline{Z}^{i;t,x})_{i=1,m}\in\mathbb{H}^{2}(\mathbb{R}^{m\times d}),~\overline{K}^{t,x}:=(\overline{K}^{i;t,x})_{i=1,m}\in\mathcal{A}^{2}_{c},\\\overline{U}^{t,x}:=(\overline{U}^{i;t,x})_{i=1,m}\in\mathbb{H}^{2}(\mathbb{L}^{2}_{m}(\lambda));\\
(ii)~\overline{Y}^{i;t,x}_{s}= g^{i}(X^{t,x}_{T})+
\overline{K}^{i;t,x}_{T}-\overline{K}^{i;t,x}_{s}-\displaystyle\int^{T}_{s}\overline{Z}^{i;t,x}_{r}\,\mathrm{d}
\mathrm{B}_{r}-\displaystyle\int^{T}_{s}
\displaystyle\int_{\mathrm{E}}\overline{U}^{i;t,x}_{r}(e)\,\tilde{\mu}(\mathrm{d}r,\mathrm{d}e).\\
\quad\quad\quad+\displaystyle\int^{T}_{s}h^{(i)}(r,X^{t,x}_{r},\overline{Y}^{i;t,x}_{r},\overline{Z}^{i;t,x}_{r},\displaystyle\int_{\mathrm{E}}\gamma^{i}(r,X^{t,x}_{r},e)\overline{U}^{i;t,x}_{r}\,\lambda(de))dr;\\
(iii)~\overline{Y}^{i;t,x}_{s}\geq \ell(s,X^{t,x}_{s})~\textrm{and}~ \displaystyle\int^{T}_{0}(\overline{Y}^{i;t,x}_{s}- \ell(s,X^{t,x}_{s}))\mathrm{d}\overline{K}^{i;t,x}_{s}=0.
\end{array}
\right.
\end{equation}
It follows that $$\forall i\in\{1,\ldots,m\},\quad \overline{Y}^{i;t,x}={Y}^{i;t,x}.$$
With the uniqueness of solution (\ref{5.68}), we have $u^{i}=\overline{u}^{i}=v^{i}$ which means that the solution of (\ref{eq1}) in the sense of Definition $5.1$ is unique inside the class $\mathcal{U}$. 
\end{proof}

\section{Extension}
In this section, we will redefine the function $h^{(i)}$ as a function of $\|U^{i,t,x}\|_{\mathbb{L}^{2}(\lambda)}$ $\forall i\in\{1,\ldots,m\}$.\\
And to show that the results of the previous section remain valid.\\
Let us consider for any $i\in\{1,\ldots,m\}$ the functions $f^{(i)}$, defined by 
$$\forall (t,x,y,z,\zeta)\in[0,T]\times\mathbb{R}^{k}\times\mathbb{R}^{m+d}\times\mathbb{L}^{2}(\lambda);\quad\quad f^{(i)}(t,x,y,z,\zeta)=h^{(i)}(t,x,y,z,\|\zeta\|_{\mathbb{L}^{2}(\lambda)});$$ where the functions $(h^{(i)})_{i=1,m}$ are the sames defined in section $2$.\\
We recall that the result of Theorem $5.2$ is obtained by having mainly $U^{t,x}_{s}(e)=(u^{i}(s,X^{t,x}_{s-}+\beta(s,X^{t,x}_{s-},e))-u^{i}(s,X^{t,x}_{s-}))$; this makes it possible to have the definition 4.1 by passing through a modification of the expression of $B_{i}u^{i}$ $\forall i\in\{1,\ldots,m\}$.\\
We show that $\|U^{i,t,x}_{s}(e)\|^{2}_{\mathbb{L}^{2}(\lambda)}=\|(u^{i}(s,X^{t,x}_{s-}+\beta(s,X^{t,x}_{s-},e))-u^{i}(s,X^{t,x}_{s-}))|\|^{2}_{\mathbb{L}^{2}(\lambda)}$ and that in this case $B_{i}u^{i}$ is well $\forall i\in\{1,\ldots,m\}$.\\
Let now $(t,x)\in[0,T]\times\mathbb{R}^{d}$  and let us consider the following m-dimensional RBSDE with jumps:
\begin{equation}\label{6.76}
\left
\{\begin{array}{ll}
(i)~\vec{Y}^{t,x}:=(Y^{i,t,x})_{i=1,m}\in\mathcal{S}^{2}(\mathbb{R}^{m}),~Z^{t,x}:=(Z^{i,t,x})_{i=1,m}\in\mathbb{H}^{2}(\mathbb{R}^{m\times d}),\\
 K^{t,x}:=(K^{i,t,x})_{i=1,m}\in\mathcal{A}^{2}_{c},~ U^{t,x}:=(U^{i,t,x})_{i=1,m}\in\mathbb{H}^{2}(\mathbb{L}^{2}_{m}(\lambda));\\
\forall i\in\{1,\ldots, m\}~ Y^{i;t,x}_{T}= g^{i}(X^{t,x}_{T})~\text{and};\\
(ii)~dY^{i;t,x}_{s}=-f^{(i)}(s,X^{t,x}_{s},(Y^{i;t,x}_{s})_{i=1,m},Z^{i;t,x}_{s},\|U^{t,x}_{s}(e)\|_{\mathbb{L}^{2}(\lambda)})ds-
\mathrm{d}\mathrm{K}^{i;t,x}_{s}\\
\quad\quad\quad\quad\quad\quad
\quad\quad+Z^{i;t,x}_{s}\mathrm{d}
\mathrm{B}_{s}+\displaystyle\int_{\mathrm{E}}\mathrm{U}^{i;t,x}
_{s}(e)\tilde{\mu}(\mathrm{d}s,\mathrm{d}e),\quad s\leq T;\\
(iii)~Y^{i;t,x}_{s}\geq \ell(s,X^{t,x}_{s})~\textrm{and}~ \displaystyle\int^{T}_{0}(Y^{i;t,x}_{s}- \ell(s,X^{t,x}_{s}))\mathrm{d}\mathrm{K}^{i;t,x}_{s}=0. 
\end{array}
\right.
\end{equation}
By assumed that (\textbf{H1}), (\textbf{H2}) and (\textbf{H3}) are verified and by proposition $3.3$ the (\ref{3.18}), there exists a family of deterministics continuous functions of polynomial growth $(w^{i})_{i=1,m}$ that fact for any $(t,x)\in[0,T]\times\mathbb{R}^{k}$,
$$\forall s\in[t,T],\qquad Y^{i;t,x}_{s}=w^{i}(s,X^{t,x}_{s}).$$
Such that by the same proposition, the family $(w^{i})_{i=1,m}$ is a viscosity solution of the following system: 
\begin{equation}\label{6.77}
\left
\{\begin{array}{ll}
\min\Big\{w^{i}(t,x)-\ell(t,x);-\partial_{t}w^{i}(t,x)-b(t,x)^{\top}\mathrm{D}_{x}w^{i}(t,x)-\frac{1}{2}\mathrm{Tr}(\sigma\sigma^{\top}(t,x)\mathrm{D}^{2}_{xx}w^{i}(t,x))\\
\quad\quad-\mathrm{K}_{i}w^{i}(t,x)-\mathit{h}^{(i)}(t,x,(w^{j}(t,x))_{j=1,m},(\sigma^{\top}\mathrm{D}_{x}w^{i})(t,x),\mathrm{B}_{i}w^{i}(t,x))\Big\}=0,\quad (t,x)\in\left[ 0,T\right] \times\mathbb{R}^{k};\\
w^{i}(T,x)=g^{i}(x).
\end{array}
\right.
\end{equation}
Indeed, using Lemma $4.1$ and the fact that $U^{i;t,x_{k}}$ converges to $U^{i;t,x}$ $\forall i\in\{1,\ldots,m\}$ when $x_{k}\longrightarrow_{k} x$, we deduce that $\|U^{i,t,x_{k}}_{s}(e)\|_{\mathbb{L}^{2}(\lambda)}\longrightarrow_{k} \|U^{i,t,x}_{s}(e)\|_{\mathbb{L}^{2}(\lambda)}$.\\
Moreover, from property of $h^{(i)}$ and the proof of theorem $5.2$ step $2$ (viscosity solution uniqueness), $\|U^{i,t,x}_{s}(e)\|^{2}_{\mathbb{L}^{2}(\lambda)}=\|(w^{i}(s,X^{t,x}_{s-}+\beta(s,X^{t,x}_{s-},e))-w^{i}(s,X^{t,x}_{s-}))\|^{2}_{\mathbb{L}^{2}(\lambda)}$; from where \\$B_{i}w^{i}=\left\lbrace\displaystyle\int_{E}|(w^{i}(s,X^{t,x}_{s-}+\beta(s,X^{t,x}_{s-},e))-w^{i}(s,X^{t,x}_{s-}))|^{2}\,\lambda(de)\right\rbrace^{\frac{1}{2}}$.\\
Thanks to corollary $4.3$, we deduce that $B_{i}w^{i}$ is well defined $\forall i\in\{1,\ldots,m\}$.
\newpage
\section*{Appendix. Barles et al.'s definition for viscosity solution of IPDE (\ref{eq1})}
In the paper by Barles et al. \cite{bar}, the definition of the viscosity solution of the system (\ref{eq1}) is given as follows.
\begin{definition}
We say that a family of deterministics functions $u=(u^{i})_{i=1,m}$ which is continuous $\forall i\in\{1,\ldots,m\}$, is a viscosity sub-solution (resp. super-solution) of the IPDE (\ref{eq1}) if:\\
$(i)\quad \forall x\in\mathbb{R}^{k}$, $u^{i}(x,T)\leq g^{i}(x)$ (resp. $u^{i}(x,T)\geq g^{i}(x)$);\\
$(ii)\quad\text{For any } (t,x)\in[0,T]\times\mathbb{R}^{k}$ and any function $\phi$ of class $C^{1,2}([0,T]\times\mathbb{R}^{k})$ such that $(t,x)$ is a global maximum point of $u^{i}-\phi$ (resp. global minimum point of $u^{i}-\phi$) and $(u^{i}-\phi)(t,x)=0$,  one has
\begin{equation*}
\min\left\lbrace u^{i}(t,x)-\ell(t,x);-\partial_{t}\phi(t,x)-\mathcal{L}^{X}\phi(t,x)-h^{i}(t,x,(u^{j}(t,x))_{j=1,m},\sigma^{\top}(t,x))D_{x}\phi(t,x),B_{i}\phi(t,x))\right\rbrace\leq 0 
\end{equation*}
$\left(resp.
\right.$
\begin{equation*}
\left.
\min\left\lbrace u^{i}(t,x)-\ell(t,x);-\partial_{t}\phi(t,x)-\mathcal{L}^{X}\phi(t,x)-h^{i}(t,x,(u^{j}(t,x))_{j=1,m},\sigma^{\top}(t,x))D_{x}\phi(t,x),B_{i}\phi(t,x)(t,x))\right\rbrace\scriptstyle\geq 0\right).
\end{equation*}
The family $u=(u^{i})_{i=1,m}$ is a viscosity solution of (\ref{eq1}) if it is both a viscosity sub-solution and viscosity super-solution.\\
Note that $\mathcal{L}^{X}\phi(t,x)=b(t,x)^{\top}\mathrm{D}_{x}\phi(t,x)+\frac{1}{2}\mathrm{Tr}(\sigma\sigma^{\top}(t,x)\mathrm{D}^{2}_{xx}\phi(t,x))+\mathrm{K}\phi(t,x)$;\\
where $\mathrm{K}\phi(t,x)=\displaystyle\int_{\mathrm{E}}(\phi(t,x+\beta(t,x,e))-\phi(t,x)-\beta(t,x,e)^{\top}\mathrm{D}_{x}\phi(t,x))\lambda(de)$.
\end{definition}
\newpage

\end{document}